\newcommand{\eps}{\varepsilon}
\newcommand{\R}{\mathbb R}
\newcommand{\N}{\mathbb N}
\newcommand{\then}{\Longrightarrow}
\newcommand{\J}{{\cal J}}
\newcommand{\M}{{\cal M}}
\newcommand{\T}{{\cal T}}
\newcommand{\er}{{\cal R}}
\newcommand{\enne}{{\cal N}}
\newcommand\meas{{\rm meas}}
\DeclareMathOperator{\codim}{codim}
\DeclareMathOperator*{\esssup}{ess\; sup}
\newtheorem{corollary}{Corollary}[section]
\newtheorem{theorem}[corollary]{Theorem}
\newtheorem{lemma}[corollary]{Lemma}
\newtheorem{proposition}[corollary]{Proposition}
\theoremstyle{definition}
\newtheorem{definition}[corollary]{Definition}
\newtheorem{remark}[corollary]{Remark}
\newtheorem{example}[corollary]{Example}
\numberwithin{equation}{section}
\begin{document}

\title{{\bf Existence and multiplicity results\\
for a class of coupled quasilinear elliptic systems\\
of gradient type}
\footnote{The research that led to the present paper was partially supported 
by MIUR--PRIN project ``Qualitative and quantitative aspects of nonlinear PDEs'' (2017JPCAPN\underline{\ }005),
{\sl Fondi di Ricerca di Ateneo} 2017/18 ``Problemi differenziali non lineari''  }}

\author{A.M. Candela, A. Salvatore, C. Sportelli\\
{\small Dipartimento di Matematica}\\
{\small Universit\`a degli Studi di Bari Aldo Moro} \\
{\small Via E. Orabona 4, 70125 Bari, Italy}\\
{\small \it annamaria.candela@uniba.it}\\
{\small \it addolorata.salvatore@uniba.it}\\
{\small \it caterina.sportelli@uniba.it}}
\date{}

\maketitle

\begin{abstract}
The aim of this paper is investigating the existence of one or more 
weak solutions of the coupled quasilinear elliptic system of gradient type
\[
(P)\qquad \left\{
\begin{array}{ll}
- {\rm div} (A(x, u)\vert\nabla u\vert^{p_1 -2} \nabla u) 
+ \frac{1}{p_1}A_u (x, u)\vert\nabla u\vert^{p_1} = G_u(x, u, v) &\hbox{ in $\Omega$,}\\ [10pt]
- {\rm div} (B(x, v)\vert\nabla v\vert^{p_2 -2} \nabla v) 
+\frac{1}{p_2}B_v(x, v)\vert\nabla v\vert^{p_2} = G_v\left(x, u, v\right) &\hbox{ in $\Omega$,}\\[10pt]
u = v = 0 &\hbox{ on $\partial\Omega$,}
\end{array}
\right.
\]
where $\Omega \subset \R^N$ is an open bounded domain, $p_1$, $p_2 > 1$ and
$A(x,u)$, $B(x,v)$ are $\mathcal{C}^1$--Carath\'eo\-do\-ry functions on $\Omega \times \R$
with partial derivatives $A_u(x,u)$, respectively $B_v(x,v)$, while
$G_u(x,u,v)$, $G_v(x,u,v)$ are given Carath\'eo\-do\-ry maps defined on $\Omega \times \R\times \R$
which are partial derivatives of a function $G(x,u,v)$.

We prove that, even if the coefficients make the variational approach more difficult, 
under suitable hypotheses functional $\J$, related to problem $(P)$,
admits at least one critical point in the ``right'' Banach space $X$. 
Moreover, if $\J$ is even, then $(P)$ has infinitely many weak bounded solutions.    

The proof, which exploits the interaction between two different norms, 
is based on a weak version of the Cerami--Palais--Smale condition, a
``good'' decomposition of the Banach space $X$  
and suitable generalizations of the Ambrosetti--Rabinowitz Mountain Pass Theorems.
\end{abstract}

\noindent
{\it \footnotesize 2020 Mathematics Subject Classification}. {\scriptsize 35J50, 35J92, 47J30, 58E05}.\\
{\it \footnotesize Key words}. {\scriptsize Coupled quasilinear elliptic system, $p$--Laplacian type operator,
subcritical growth, weak Cerami--Palais--Smale condition, Ambrosetti--Rabinowitz condition, Mountain Pass theorem, 
even functional, pseudo--eigenvalue}.


\section{Introduction} \label{secintroduction}

In this paper we investigate the existence of one or more solutions for the coupled quasilinear
elliptic system with homogeneous Dirichlet boundary conditions
\begin{equation}   \label{euler}
\left\{
\begin{array}{ll}
- {\rm div} (A(x, u)\vert\nabla u\vert^{p_1 -2} \nabla u) 
+ \frac{1}{p_1}A_u (x, u)\vert\nabla u\vert^{p_1} = G_u(x, u, v) &\hbox{ in $\Omega$,}\\ [10pt]
- {\rm div} (B(x, v)\vert\nabla v\vert^{p_2 -2} \nabla v) 
+\frac{1}{p_2}B_v(x, v)\vert\nabla v\vert^{p_2} = G_v\left(x, u, v\right) &\hbox{ in $\Omega$,}\\[10pt]
u = v = 0 &\hbox{ on $\partial\Omega$,}
\end{array}
\right.
\end{equation}
where $\Omega$ is an open bounded domain in $\R^N$, $N \ge 2$, $p_1, p_2 >1$, 
functions $A$, $B :\Omega \times \R \to \R$ admit partial derivatives
\begin{equation}      \label{AuAv}
A_u(x,u) =\ \frac{\partial A}{\partial u}(x,u), 
\quad  B_v(x,v) =\ \frac{\partial B}{\partial v}(x,v)\qquad
\hbox{for a.e. $x\in \Omega$, for all $u\in \R$, $v\in \R$,}
\end{equation}
and a function $G : \Omega \times \R\times \R \to \R$ exists
such that $\nabla G(x,u,v) = (G_u(x,u,v),G_v(x,u,v))$, i.e.
\begin{equation}     \label{GuGv} 
\frac{\partial G}{\partial u}(x,u,v) = G_u(x,u,v),
\quad \frac{\partial G}{\partial v}(x,u,v) = G_v(x,u,v)\quad
\hbox{for a.e. $x\in \Omega$, for all $(u,v)\in \R^2$.}
\end{equation}

We note that the coefficients $A(x,u)$ and $B(x,v)$ make the variational approach more difficult.
In fact, also investigating the existence of solutions for just an equation
\begin{equation}   \label{euler_eq}
\left\{
\begin{array}{ll}
- {\rm div} (A(x,u) |\nabla u|^{p-2} \nabla u) + \frac1p\ A_u(x,u)
 |\nabla u|^p   =\  g(x,u) &\hbox{in $\Omega$,}\\
u = 0 &\hbox{ on $\partial\Omega$,}
\end{array}
\right.
\end{equation}
requires suitable approaches such as nonsmooth techniques 
or null G\^ateaux derivative only along ``good'' directions
or a suitable variational setting
(see, e.g., \cite{AB1,CP1,CP2,Ca,FMMT,PeSq,Sq}).

In the particular setting $A(x,u) = A^*(x)$ and $B(x,v) = B^*(x)$,
problem \eqref{euler} reduces to
\begin{equation} \label{P01}
\left\{
\begin{array}{ll}
- {\rm div} (A^*(x)\vert\nabla u\vert^{p_1 -2} \nabla u) = G_u(x, u, v) &\hbox{ in $\Omega$,}\\ [10pt]
- {\rm div} (B^*(x)\vert\nabla v\vert^{p_2 -2} \nabla v) = G_v\left(x, u, v\right) &\hbox{ in $\Omega$,}\\[10pt]
u = v = 0 &\hbox{ on $\partial\Omega$,}
\end{array}
\right.
\end{equation}
with $A^*$, $B^* :\Omega \to \R$. If $A^*(x)$, $B^*(x)$ 
are measurable, bounded and strictly 
positive functions, then \eqref{P01} generalizes
the classical $(p_1,p_2)$--Laplacian system 
\begin{equation} \label{P1}
\left\{\begin{array}{ll}
- \Delta_{p_1}\ u \ = \ G_u(x,u,v)
&\hbox{in $\Omega$,}\\
- \Delta_{p_2}\ v \ =\ G_v(x,u,v)&\hbox{in $\Omega$,}\\
u = v = 0 &\hbox{on $\partial \Omega$.}
\end{array}
\right.
\end{equation}

The recent literature provides many works concerning the study of 
coupled elliptic systems of gradient type.
In fact, many authors have studied problem \eqref{P1}
and have obtained several existence results under hypotheses of sublinear,
superlinear, and resonant type on the nonlinearity $G(x,u,v)$ (see, e.g., \cite{AMS, BdF, DSZ, PAO, VdT}
and, if $p_1=p_2=p$, eventually $p=2$, the survey \cite{dF} and references therein).

On the other hand, a generalization of system \eqref{P01} has been studied
in \cite{CMPP} by using a cohomological
local splitting while quasilinear elliptic systems similar to 
\eqref{euler} are studied in \cite{BB} by means of an approximation approach
and in \cite{AG,Sq} via nonsmooth techniques.

Here, we want to extend the existence result in \cite[Theorem 3]{BdF} to our 
quasilinear problem \eqref{euler}, but with stronger subcritical growth conditions
which allow us to find bounded solutions. Moreover, we want
also to state a multiplicity result in hypothesis of symmetry.
 
To this aim, following the approach
developed in \cite{CP1,CP2}, we state enough conditions for recognizing 
the variational structure of problem \eqref{euler} (see Proposition \ref{smooth1}).
Thus, investigating for solutions of \eqref{euler}
reduces to looking for critical points of the nonlinear functional
\begin{equation}      \label{functional}
\J(u,v) = \frac{1}{p_1}\int_{\Omega} A(x,u) \vert\nabla u\vert^{p_1} dx 
+ \frac{1}{p_2}\int_{\Omega} B(x,v) \vert\nabla v\vert^{p_2} dx 
- \int_{\Omega} G(x,u,v) dx
\end{equation}
in the Banach space $X= X_1\times X_2$, 
where $X_i = W^{1,p_i}_0(\Omega) \cap L^\infty(\Omega)$, for $i=1, 2$.

Unluckily, in our setting classical abstract theorems such as the statements in \cite{AR,BBF} 
cannot be applied, since functional $\J$ does not satisfy neither the Palais--Smale condition 
nor its Cerami's variant;
in fact, already in the simpler case
\eqref{euler_eq} a Palais--Smale sequence converging 
in the $W_0^{1,p}(\Omega)$ norm but unbounded in $L^{\infty}(\Omega)$ can be found
(see, for example, \cite[Example 4.3]{CP2017}).
Therefore, by exploiting the interaction between two different norms on $X$,
we introduce the weak Cerami--Palais--Smale condition (see Definition \ref{wCPS})
and apply the abstract theorems in \cite{CP3} in order to prove the existence 
of at least one solution (see Theorem \ref{ThExist}) or
of infinitely many distinct ones if $\J$ is even
(see Theorem \ref{ThMolt} and Corollary \ref{ThMolt1}). The multiplicity result 
is based on a ``good'' decomposition of the Sobolev spaces $W^{1,p_i}_0(\Omega)$  
as given in \cite[Section 5]{CP2}.

In this paper we assume that the coefficients $A(x,u)$ and $B(x,v)$
are $C^1$--Carath\'eodory, bounded on bounded sets, greater of a strictly positive constant
and interact properly with their partial derivatives 
(interactions trivially satisfied by $A^*(x)$ and $B^*(x)$ in problem
\eqref{P01}),
while $G(x,u,v)$ is a $C^1$--Carath\'eodory function
which has a suitable subcritical growth and satisfies an
Ambrosetti--Rabinowitz type condition.
Moreover, in order to state an existence result (see Theorem \ref{ThExist}),
we assume a condition on the behavior of $G(x,u,v)$ at $(0,0)$ which imposes
a growth of $G(x,u,v)$ below a constant of the first straight lines 
associated to the  $-\Delta_{p_1}$ and $-\Delta_{p_2}$
so that functional $\J$ is greater than a positive constant (and thus greater than $\J(0,0)$) 
in a sphere of small radius as required for applying the Mountain Pass Theorem 
(see Theorem \ref{mountainpass}). On the other hand, a multiplicity result
can be stated (see Theorem \ref{ThMolt}) by using a ``good'' version
of the Symmetric Mountain Pass Theorem 
(see Theorem \ref{abstract}) once $\J$ is even and 
$G(x,u,v)$ grows fast enough at infinity.
Anyway, in order to not weigh this introduction down
with too many details, we prefer to specify each
hypothesis when required and to state 
our main results at the beginning of Section \ref{sec_main}
(see Theorems \ref{ThExist} and \ref{ThMolt}).
\smallskip

This work is organized as follows.
In Section \ref{abstractsection} we introduce an abstract setting and, in particular,
the weak Cerami--Palais--Smale condition
and some related existence and multiplicity results which generalize 
the Ambrosetti--Rabinowitz Mountain Pass Theorem (see \cite[Theorem 2.2]{Ra1}) 
and its symmetric version (see \cite[Theorem 9.12]{Ra1}).
In Section \ref{variational}, after introducing some first hypotheses for $A(x,u)$, $B(x, v)$ and $G(x,u,v)$, 
we give the variational principle for problem \eqref{euler}. 
Then, in Section \ref{sec_wcps} we prove that $\J$ 
satisfies the weak Cerami--Palais--Smale condition. 
Finally, in Section \ref{sec_main}, a ``good'' decomposition of
$X$ is given and the main results are stated and proved. Moreover, 
some explicit examples are pointed out
(see Corollaries \ref{cor1} and \ref{cor2})


\section{Abstract setting}
\label{abstractsection}

We denote $\N = \{1, 2, \dots\}$ and, throughout this section, we assume that:
\begin{itemize}
\item $(X, \|\cdot\|_X)$ is a Banach space with dual 
$(X',\|\cdot\|_{X'})$;
\item $(W,\|\cdot\|_W)$ is a Banach space such that
$X \hookrightarrow W$ continuously, i.e. $X \subset W$ and a constant $\sigma_0 > 0$ exists
such that
\[
\|\xi\|_W \ \le \ \sigma_0\ \|\xi\|_X\qquad \hbox{for all $\xi \in X$;}
\]
\item $J : {\cal D} \subset W \to \R$ and $J \in C^1(X,\R)$ with $X \subset {\cal D}$;
\item $K_J = \{ \xi \in X:\ dJ(\xi) = 0\}$ is
the set of the critical points of $J$ in $X$.
\end{itemize}
Furthermore, fixing $\beta \in \R$, we denote as
\begin{itemize}
\item $K_J^\beta = \{ \xi \in X:\ J(\xi) = \beta,\ dJ(\xi) = 0\}$
the set of the critical points of $J$ in $X$ at level $\beta$;
\item $J^\beta = \{ \xi\in X:\ J(\xi) \leq \beta\}$
the sublevel of $J$ with respect to $\beta$.
\end{itemize}

Anyway, in order to avoid any
ambiguity and simplify, when possible, the notation, 
from now on by $X$ we denote the space equipped with
its given norm $\|\cdot\|_X$ while, if the norm $\Vert\cdot\Vert_{W}$ is involved,
we write it explicitly.

For simplicity, taking $\beta \in \R$, we say that a sequence
$(\xi_n)_n\subset X$ is a {\sl Cerami--Palais--Smale sequence at level $\beta$},
briefly {\sl $(CPS)_\beta$--sequence}, if
\[
\lim_{n \to +\infty}J(\xi_n) = \beta\quad\mbox{and}\quad 
\lim_{n \to +\infty}\|dJ\left(\xi_n\right)\|_{X'} (1 + \|\xi_n\|_X) = 0.
\]
Moreover, $\beta$ is a {\sl Cerami--Palais--Smale level}, briefly {\sl $(CPS)$--level}, 
if there exists a $(CPS)_\beta$--sequence.

As $(CPS)_\beta$--sequences may exist which are unbounded in $\|\cdot\|_X$
but converge with respect to $\|\cdot\|_W$,
we have to weaken the classical Cerami--Palais--Smale 
condition in a suitable way according to the ideas already developed in 
previous papers (see, e.g., \cite{CP1,CP2,CP3}).  

\begin{definition} \label{wCPS}
The functional $J$ satisfies the
{\slshape weak Cerami--Palais--Smale 
condition at level $\beta$} ($\beta \in \R$), 
briefly {\sl $(wCPS)_\beta$ condition}, if for every $(CPS)_\beta$--sequence $(\xi_n)_n$,
a point $\xi \in X$ exists, such that 
\begin{description}{}{}
\item[{\sl (i)}] $\displaystyle 
\lim_{n \to+\infty} \|\xi_n - \xi\|_W = 0\quad$ (up to subsequences),
\item[{\sl (ii)}] $J(\xi) = \beta$, $\; dJ(\xi) = 0$.
\end{description}
If $J$ satisfies the $(wCPS)_\beta$ condition at each level $\beta \in I$, $I$ real interval, 
we say that $J$ satisfies the $(wCPS)$ condition in $I$.
\end{definition}

Since in \cite{CP3} a Deformation Lemma has been proved 
if the functional $J$ satisfies a weaker version of the $(wCPS)_\beta$ condition,
namely any $(CPS)$--level is also a critical level, in particular we can state the following result. 

\begin{lemma}[Deformation Lemma] \label{def}
Let $J\in C^1(X,\R)$ and consider $\beta \in \R$ such that
\begin{itemize}
\item $J$ satisfies the $(wCPS)_\beta$ condition, 
\item $\ K_J^\beta = \emptyset$.
\end{itemize}
Then, fixing any $\bar \eps > 0$, there exist
a constant $\eps > 0$ and a homeomorphism $h_\eps : X \to X$
such that $2\eps < {\bar \eps}$ and
\begin{itemize}
\item[$(i)$] $h_\eps(J^{\beta+\eps}) \subset J^{\beta-\eps}$,
\item[$(ii)$] $h_\eps(\xi) = \xi$ for all $\xi \in X$ such that either
$J(\xi) \leq \beta-\bar\eps$ or $J(\xi) \ge \beta+\bar\eps$.
\end{itemize}
Moreover, if $J$ is even on $X$, then $h_\eps$ can be chosen odd.
\end{lemma}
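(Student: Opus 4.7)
The plan is to reduce Lemma \ref{def} to the abstract Deformation Lemma proved in \cite{CP3}, which requires only the weaker hypothesis that every $(CPS)$--level of $J$ be a critical level. My first step is to verify this at the level $\beta$ under our stronger assumptions. Suppose, for contradiction, that $(\xi_n)_n \subset X$ is a $(CPS)_\beta$--sequence. The $(wCPS)_\beta$ condition produces $\xi \in X$ with $\|\xi_n - \xi\|_W \to 0$, $J(\xi) = \beta$ and $dJ(\xi) = 0$, which forces $\xi \in K_J^\beta$ and contradicts $K_J^\beta = \emptyset$. Hence no $(CPS)_\beta$--sequence exists at all, and the weaker hypothesis is trivially satisfied at $\beta$. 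The existence of $\eps > 0$ with $2\eps < \bar\eps$ and of a homeomorphism $h_\eps : X \to X$ satisfying $(i)$--$(ii)$ (odd when $J$ is even) then follows directly from \cite{CP3}.

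For intuition on the construction imported from \cite{CP3}: the absence of $(CPS)_\beta$--sequences delivers a uniform lower bound $\|dJ(\xi)\|_{X'}(1 + \|\xi\|_X) \ge r > 0$ on a strip of the form $\{\xi \in X : |J(\xi) - \beta| \le \delta\}$, for some $\delta < \bar\eps/2$. One then builds a locally Lipschitz pseudo-gradient field on $X$ aligned with $-dJ(\xi)/(1+\|\xi\|_X)$ on this strip, truncated by a Lipschitz cut-off that vanishes on $\{J \le \beta - \bar\eps\} \cup \{J \ge \beta + \bar\eps\}$, and integrates it to obtain a complete flow $\eta$ on $[0,1] \times X$. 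Choosing $\eps > 0$ small enough so that one unit of time along $\eta$ drops $J$ by more than $2\eps$ on the target strip while still $2\eps < \bar\eps$, the homeomorphism $h_\eps := \eta(1, \cdot)$ realizes $(i)$, while the vanishing of the cut-off gives $(ii)$. Evenness of $J$ is transferred to $h_\eps$ by symmetrizing the vector field, i.e.\ replacing $V(\xi)$ by $\tfrac{1}{2}(V(\xi) - V(-\xi))$.

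The only conceptual obstacle, and the reason why this step is nontrivial and why the abstract framework of \cite{CP3} is needed, is the mismatch between the two norms: the $(CPS)_\beta$--sequences converge only in the coarser norm $\|\cdot\|_W$, while the deformation must act as a homeomorphism of $X$. The weight $(1 + \|\xi\|_X)$ in the $(CPS)$ definition is precisely what permits the pseudo-gradient flow to remain complete and globally well-defined on $X$ even though $(CPS)_\beta$--sequences may be unbounded in $\|\cdot\|_X$. In our application this delicate piece is encapsulated in \cite{CP3}, and Lemma \ref{def} itself reduces to the short implication carried out above.
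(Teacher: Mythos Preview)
Your proposal is correct and follows essentially the same route as the paper: both reduce the statement to the abstract Deformation Lemma of \cite{CP3} (the paper simply invokes \cite[Lemma 2.3]{CP3} with $\beta_1=\beta_2=\beta$ and notes that the resulting deformation is a homeomorphism). Your explicit verification that $\beta$ cannot be a $(CPS)$--level under $(wCPS)_\beta$ together with $K_J^\beta=\emptyset$, and the added heuristic description of the pseudo--gradient flow, are consistent with and slightly more detailed than the paper's one--line reduction.
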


\begin{proof}
It is enough reasoning as in \cite[Lemma 2.3]{CP3} with $\beta_1 = \beta_2 = \beta$
and pointing out that the deformation $h_\eps: X \to X$ is a homeomorphism.    
\end{proof}

From Lemma \ref{def} we obtain the following generalization of the Mountain Pass Theorem
(compare it with \cite[Theorem 1.7]{CP3} and the classical statement in \cite[Theorem 2.2]{Ra1}).

\begin{theorem}
\label{mountainpass}
Let $J\in C^1(X,\R)$ be such that $J(0) = 0$
and the $(wCPS)$ condition holds in $\R_+$.\\
Moreover, assume that there exist some constants $R_0$, $\varrho_0 > 0$, and  
 $e \in X$ such that 
\begin{itemize}
\item[$(i)$] $\; \xi \in X, \quad \|\xi\|_W = R_0\qquad \then\qquad J(\xi) \geq \varrho_0$;
\item[$(ii)$] $\; \| e\|_W > R_0\qquad\hbox{and}\qquad J(e) < \varrho_0$.
\end{itemize}
Then, $J$ has a Mountain Pass critical point $\xi \in X$ such that $J(\xi) \geq \varrho_0$.
\end{theorem}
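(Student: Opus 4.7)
The plan is to follow the classical Ambrosetti--Rabinowitz minimax scheme, using the Deformation Lemma \ref{def} in place of the usual one. Consider the family of admissible paths
\[
\Gamma = \{\gamma \in C([0,1],X) : \gamma(0) = 0,\ \gamma(1) = e\}
\]
and set
\[
c = \inf_{\gamma \in \Gamma} \max_{t \in [0,1]} J(\gamma(t)).
\]
The straight segment $\gamma_0(t) = te$ belongs to $\Gamma$, so $\Gamma \neq \emptyset$; since $J$ is continuous on $X$ and $[0,1]$ is compact, $c < +\infty$. The aim is to prove that $c$ is a critical level with $c \geq \varrho_0$.

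First I would establish the geometric lower bound $c \geq \varrho_0$ via a mountain-pass separation in the $W$-topology. Given any $\gamma \in \Gamma$, the real-valued map $t \mapsto \|\gamma(t)\|_W$ is continuous on $[0,1]$ thanks to the continuous embedding $X \hookrightarrow W$; it vanishes at $t=0$ and equals $\|e\|_W > R_0$ at $t=1$, so by the intermediate value theorem there exists $t_\gamma \in (0,1)$ with $\|\gamma(t_\gamma)\|_W = R_0$. Hypothesis $(i)$ then yields $\max_{t\in[0,1]} J(\gamma(t)) \geq J(\gamma(t_\gamma)) \geq \varrho_0$, and taking the infimum over $\Gamma$ gives $c \geq \varrho_0 > 0$.

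The identification of $c$ as a critical level proceeds by contradiction: suppose $K_J^c = \emptyset$. Since $c \in \R_+$, the $(wCPS)_c$ condition holds and Lemma \ref{def} applies. Choosing $\bar\varepsilon > 0$ small enough that $\bar\varepsilon < c - J(e)$ (possible since $J(e) < \varrho_0 \leq c$), the lemma yields $\varepsilon \in (0,\bar\varepsilon/2)$ and a homeomorphism $h_\varepsilon : X \to X$ such that $h_\varepsilon(J^{c+\varepsilon}) \subset J^{c-\varepsilon}$ and $h_\varepsilon(\xi) = \xi$ whenever $J(\xi) \leq c - \bar\varepsilon$ or $J(\xi) \geq c + \bar\varepsilon$. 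By the choice of $\bar\varepsilon$ both endpoints are fixed: $J(0) = 0 \leq c - \bar\varepsilon$ and $J(e) \leq c - \bar\varepsilon$. Picking $\gamma \in \Gamma$ with $\max_t J(\gamma(t)) \leq c + \varepsilon$ and setting $\tilde\gamma = h_\varepsilon \circ \gamma$, the path $\tilde\gamma$ still belongs to $\Gamma$ (continuity is preserved and the endpoints are unchanged) and satisfies $\max_t J(\tilde\gamma(t)) \leq c - \varepsilon$, contradicting the definition of $c$.

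The argument is largely routine once Lemma \ref{def} is in hand, but two subtleties deserve attention. The main obstacle is ensuring that $J(e) < c$ strictly, so that $\bar\varepsilon$ can be chosen to pin both endpoints outside the deformation region; this is exactly what the strict inequality $J(e) < \varrho_0 \leq c$ in hypothesis $(ii)$ provides. A more subtle point is the separation of topologies: paths live in the finer space $X$, while the mountain-pass barrier is described in the coarser norm $\|\cdot\|_W$. The embedding $X \hookrightarrow W$ makes every $X$-continuous path automatically $W$-continuous, which is precisely what allows the intermediate value theorem to be applied in the $W$-topology without excluding any admissible path from $\Gamma$.
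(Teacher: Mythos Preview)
Your argument is correct and follows the standard minimax scheme that the paper implicitly invokes (the paper does not give a detailed proof of Theorem~\ref{mountainpass}, but simply states that it follows from Lemma~\ref{def} and refers to \cite[Theorem~1.7]{CP3}). One small point: when choosing $\bar\varepsilon$, you also need $\bar\varepsilon \le c$ so that $J(0)=0 \le c-\bar\varepsilon$; this is not implied by $\bar\varepsilon < c - J(e)$ alone when $J(e)<0$, so you should take $\bar\varepsilon < \min\{c - J(e),\, c\}$, which is possible since both quantities are positive.
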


Furthermore, with the stronger assumption that $J$ is symmetric, 
 the following generalization of the symmetric Mountain Pass Theorem can be stated
(see \cite[Theorem 2.4]{CPS_CCM} and compare with \cite[Theorem 9.12]{Ra1} and 
\cite[Theorem 2.4]{BBF}).

\begin{theorem}
\label{abstract}
Let $J\in C^1(X,\R)$ be an even functional such that $J(0) = 0$ and
the $(wCPS)$ condition holds in $\R_+$.
Moreover, assume that $\varrho > 0$ exists so that:
\begin{itemize}
\item[$({\cal H}_{\varrho})$]
three closed subsets $V_\varrho$, $Y_\varrho$ and $\M_\varrho$ of $X$ and a constant
$R_\varrho > 0$ exist which satisfy the following conditions:
\begin{itemize}
\item[$(i)$] $V_\varrho$ and $Y_\varrho$ are subspaces of $X$ such that
\[
V_\varrho + Y_\varrho = X,\qquad \codim Y_\varrho\ <\ \dim V_\varrho\ <\ +\infty;
\]
\item[$(ii)$] $\M_\varrho = \partial \enne$, where $\enne \subset X$ is a neighborhood of the origin
which is symmetric and bounded with respect to $\|\cdot\|_W$; 
\item[$(iii)$]  $\ \xi \in \M_\varrho \cap Y_\varrho\qquad \then\qquad J(\xi) \ge \varrho$;
\item[$(iv)$] $\ \xi \in V_\varrho, \quad \|\xi\|_X \ge R_\varrho \qquad \then\qquad J(\xi) \le 0$.
\end{itemize}
\end{itemize}
Then, if we put 
\[
\beta_\varrho\ =\ \inf_{\gamma \in \Gamma_\varrho} \sup_{\xi\in V_\varrho} J(\gamma(\xi)),
\]
with 
\[
\Gamma_\varrho\ =\ \{\gamma : X \to X:\
\gamma\ \hbox{odd homeomeorphism,}\quad \gamma(\xi) = \xi \ 
\hbox{if $\xi \in V_\varrho$ with $\|\xi\|_X \ge R_\varrho$}\},
\]
the functional $J$ possesses at least a pair of symmetric critical points in $X$ 
with corresponding critical level $\beta_\varrho$ which belongs to $[\varrho,\varrho_1]$,
where $\varrho_1 \ge \displaystyle \sup_{\xi \in V_\varrho}J(\xi) > \varrho$.
\end{theorem}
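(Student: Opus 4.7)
The plan is to follow the classical minimax-plus-deformation scheme for the symmetric Mountain Pass Theorem, but with the classical Deformation Lemma replaced by Lemma \ref{def}, and the Borsuk--Ulam-type intersection argument carried out carefully in the finite-dimensional space $V_\varrho$. I would organize the argument in three steps.

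\medskip
\noindent\textbf{Step 1: $\beta_\varrho$ is finite and bounded above by some $\varrho_1$.}
Since the identity map belongs to $\Gamma_\varrho$, one has $\beta_\varrho \le \sup_{\xi\in V_\varrho} J(\xi)$. By $(iv)$, $J(\xi)\le 0$ whenever $\xi\in V_\varrho$ and $\|\xi\|_X\ge R_\varrho$; as $V_\varrho$ is finite-dimensional, the supremum is attained on the closed ball $\{\xi\in V_\varrho:\|\xi\|_X\le R_\varrho\}$, which is compact. This yields a finite $\varrho_1$ with the stated properties.

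\medskip
\noindent\textbf{Step 2: Intersection property, giving $\beta_\varrho\ge\varrho$.}
The heart of the proof is to show that for every $\gamma\in\Gamma_\varrho$,
\[
\gamma(V_\varrho)\cap \M_\varrho\cap Y_\varrho \ \neq\ \emptyset,
\]
so that $(iii)$ gives $\sup_{V_\varrho} J(\gamma(\xi))\ge \varrho$ and hence $\beta_\varrho\ge\varrho$. Set $\enne':=\gamma^{-1}(\enne)\cap V_\varrho$. Openness and symmetry follow from $(ii)$ and the oddness of $\gamma$; boundedness in $V_\varrho$ comes from the fact that $\enne$ is bounded in $\|\cdot\|_W$, that $X\hookrightarrow W$, and that all norms are equivalent on the finite-dimensional $V_\varrho$, combined with $\gamma=\mathrm{id}$ on $\{\xi\in V_\varrho:\|\xi\|_X\ge R_\varrho\}$. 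Next, pick a continuous linear projection $P:X\to Z$ onto an algebraic complement $Z$ of $Y_\varrho$; by $(i)$, $\dim Z=\codim Y_\varrho<\dim V_\varrho$. The map $P\circ\gamma$ restricted to $\partial_{V_\varrho}\enne'$ is continuous and odd from the topological boundary of a bounded symmetric open neighborhood of $0$ in $V_\varrho$ into a strictly lower-dimensional space, so the Borsuk--Ulam theorem yields $\xi_0\in\partial_{V_\varrho}\enne'$ with $P(\gamma(\xi_0))=0$, i.e. $\gamma(\xi_0)\in Y_\varrho$. Approximating $\xi_0$ by sequences inside and outside $\enne'$ and using that $\gamma$ is a homeomorphism of $X$, one checks $\gamma(\xi_0)\in\partial_X\enne=\M_\varrho$.

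\medskip
\noindent\textbf{Step 3: $\beta_\varrho$ is a critical level.}
Argue by contradiction: assume $K_J^{\beta_\varrho}=\emptyset$. Since $\beta_\varrho\ge\varrho>0$, the $(wCPS)_{\beta_\varrho}$ condition holds. Fix $\bar\eps\in(0,\beta_\varrho)$ and apply Lemma \ref{def} to obtain $\eps\in(0,\bar\eps/2)$ and an \emph{odd} homeomorphism $h_\eps:X\to X$ with $h_\eps(J^{\beta_\varrho+\eps})\subset J^{\beta_\varrho-\eps}$ and $h_\eps=\mathrm{id}$ on $\{J\le\beta_\varrho-\bar\eps\}\cup\{J\ge\beta_\varrho+\bar\eps\}$. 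By definition of $\beta_\varrho$, choose $\gamma_0\in\Gamma_\varrho$ with $\sup_{V_\varrho} J(\gamma_0(\xi))<\beta_\varrho+\eps$, and consider $h_\eps\circ\gamma_0$. It is an odd homeomorphism; moreover, for $\xi\in V_\varrho$ with $\|\xi\|_X\ge R_\varrho$, one has $\gamma_0(\xi)=\xi$ and, by $(iv)$, $J(\xi)\le 0<\beta_\varrho-\bar\eps$, whence $h_\eps(\gamma_0(\xi))=\xi$. Thus $h_\eps\circ\gamma_0\in\Gamma_\varrho$, and $\sup_{V_\varrho}J(h_\eps\circ\gamma_0(\xi))\le\beta_\varrho-\eps<\beta_\varrho$, a contradiction. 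Evenness of $J$ ensures $K_J^{\beta_\varrho}$ is symmetric, so it contains a pair $\{\pm\xi\}$ of critical points.

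\medskip
The main obstacle is the intersection step: one must verify carefully that the boundary of $\enne'$ taken \emph{inside} the finite-dimensional $V_\varrho$ is indeed mapped by $\gamma$ into the $X$-boundary $\M_\varrho$ (rather than into the larger set $\overline{\enne}$), and that boundedness of $\enne$ in the weaker norm $\|\cdot\|_W$ translates to boundedness of $\enne'$ in $V_\varrho$. Once the interplay between the $W$-topology (used for $\M_\varrho$), the $X$-topology (used in $\Gamma_\varrho$ and in the Deformation Lemma), and the intrinsic topology of the finite-dimensional subspace $V_\varrho$ is correctly handled, the Borsuk--Ulam step and the deformation step are standard.
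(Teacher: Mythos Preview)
The paper does not actually prove this theorem: it is stated with a reference to \cite[Theorem 2.4]{CPS_CCM} (and compared with \cite[Theorem 9.12]{Ra1}, \cite[Theorem 2.4]{BBF}), and no argument is given beyond that citation. So there is no ``paper's own proof'' to compare against.

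That said, your scheme is exactly the standard one underlying the cited result and it is sound. Step~1 and Step~3 are straightforward given Lemma~\ref{def}; in Step~3 the check that $h_\eps\circ\gamma_0\in\Gamma_\varrho$ is correct because $(iv)$ and $\bar\eps<\beta_\varrho$ force $J(\xi)\le 0<\beta_\varrho-\bar\eps$ on the ``frozen'' region. In Step~2 your construction of $\enne'=\gamma^{-1}(\enne)\cap V_\varrho$ and the verification that it is an open, bounded, symmetric neighborhood of $0$ in the finite-dimensional $V_\varrho$ is right (the boundedness argument via $\gamma=\mathrm{id}$ outside the $R_\varrho$--ball and equivalence of norms on $V_\varrho$ is the correct way to pass from $\|\cdot\|_W$--boundedness of $\enne$ to $\|\cdot\|_X$--boundedness of $\enne'$), and the argument that $\gamma(\xi_0)\in\partial_X\enne$ from $\xi_0\in\partial_{V_\varrho}\enne'$ is fine.

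One terminological caution: what you invoke is not literally the Borsuk--Ulam theorem (which is stated for round spheres) but its degree/genus counterpart for boundaries of arbitrary bounded symmetric open neighborhoods of the origin: if $U\subset\R^n$ is such a set, then the Krasnoselskii genus of $\partial U$ equals $n$, so every continuous odd map $\partial U\to\R^m$ with $m<n$ must vanish somewhere. This is what you actually need for $P\circ\gamma$ on $\partial_{V_\varrho}\enne'$, since $\enne'$ need not be a ball. With that adjustment in wording, your proof is complete and matches the approach of the references the paper defers to.
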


If we can apply infinitely many times Theorem \ref{abstract}, then the following multiplicity abstract 
result can be stated.

\begin{corollary}
\label{multiple}
Let $J \in C^1(X,\R)$ be an even functional such that $J(0) = 0$,
the $(wCPS)$ condition holds in $\R_+$ and a sequence $(\varrho_n)_n \subset\ ]0,+\infty[$ exists
such that $\varrho_n \nearrow +\infty$ and  
assumption $({\cal H}_{\varrho_n})$ holds for all $n \in \N$.\\
Then, functional $J$ possesses a sequence of critical points $(u_{k_n})_n \subset X$ such that
$J(u_{k_n}) \nearrow +\infty$ as $n \nearrow +\infty$.
\end{corollary}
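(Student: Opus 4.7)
The plan is straightforward: Corollary \ref{multiple} is essentially a bookkeeping argument on top of Theorem \ref{abstract}, so the proof reduces to iterating Theorem \ref{abstract} and showing that the resulting critical levels diverge.

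First, for each fixed $n \in \N$, the hypothesis $(\mathcal{H}_{\varrho_n})$ together with the standing assumptions ($J\in C^1(X,\R)$, even, $J(0)=0$, $(wCPS)$ holds in $\R_+$) allows me to apply Theorem \ref{abstract} with $\varrho = \varrho_n$. This produces at least one pair of symmetric critical points, call one of them $u_{k_n} \in X$, with critical value
\[
\beta_{\varrho_n} \;=\; J(u_{k_n}) \;\in\; [\varrho_n, \varrho_{1,n}],
\]
where $\varrho_{1,n} \ge \sup_{\xi \in V_{\varrho_n}} J(\xi)$. In particular $J(u_{k_n}) \ge \varrho_n$.

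Second, since $\varrho_n \nearrow +\infty$ by assumption, the sequence of critical values satisfies $J(u_{k_n}) \ge \varrho_n \to +\infty$. Hence up to extracting a (strictly increasing in $n$) subsequence, which I still denote $(u_{k_n})_n$, the critical values form a strictly monotone sequence going to $+\infty$, so in particular the $u_{k_n}$ are pairwise distinct (any repetition would force the corresponding critical values to coincide). This gives the desired sequence of distinct critical points with $J(u_{k_n}) \nearrow +\infty$.

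The only subtle point — and the closest thing to an obstacle — is that Theorem \ref{abstract} does not provide a monotonically indexed family of critical points intrinsically; it only guarantees one pair per application. What makes the iteration work is the quantitative lower bound $\beta_{\varrho_n} \ge \varrho_n$, which comes directly from condition $(iii)$ of $(\mathcal{H}_{\varrho_n})$ applied along any admissible path in $\Gamma_{\varrho_n}$ (since every such $\gamma$ satisfies $\gamma(V_{\varrho_n}) \cap \M_{\varrho_n} \ne \emptyset$ by the codimension/dimension inequality in $(i)$). Since $\varrho_n \to +\infty$ is assumed a priori, no further argument is needed to force the critical values to diverge, and the distinctness of the points is a cheap consequence. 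Thus the proof is essentially a one-line invocation of Theorem \ref{abstract} followed by a subsequence extraction.
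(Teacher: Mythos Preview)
Your proposal is correct and matches the paper's intended argument: the paper states Corollary \ref{multiple} without proof, treating it as an immediate consequence of Theorem \ref{abstract}, and your iteration-plus-subsequence argument is exactly the natural way to fill in that implicit step.
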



\section{Variational setting and first properties}
\label{variational}

From now on, let $\Omega \subset \R^N$ be an open bounded domain, $N\ge 2$,
so we denote by:
\begin{itemize}
\item $L^r(\Omega)$ the Lebesgue space with
norm $|\xi|_r = \left(\int_\Omega|\xi|^r dx\right)^{1/r}$ if $1 \le r < +\infty$;
\item $L^\infty(\Omega)$ the space of Lebesgue--measurable 
and essentially bounded functions $\xi :\Omega \to \R$ with norm $\displaystyle |\xi|_{\infty} = \esssup_{\Omega} |\xi|$;
\item $W^{1,p}_0(\Omega)$ the classical Sobolev space with
norm $\|\xi\|_{W_0^{1, p}} = |\nabla \xi|_{p}$ if $1 \le p < +\infty$;
\item $\meas(D)$ the usual Lebesgue measure of a measurable set $D$ in $\R^N$.
\end{itemize}

For simplicity, here and in the following we denote by $|\cdot|$  
the standard norm on any Euclidean space, as the dimension
of the considered vector is clear and no ambiguity occurs, and 
by $C$ any strictly positive constant which arises by computation. 

In order to investigate the existence of weak solutions  
of the nonlinear problem \eqref{euler},
consider $p_1$, $p_2 > 1$ and, for $ i\in\lbrace 1, 2\rbrace$, the related Sobolev space
\[
W_i = W_0^{1, p_i}(\Omega)\quad \hbox{with norm $\Vert \cdot\Vert_{W_i} = \Vert \cdot\Vert_{W_0^{1, p_i}}$.}
\]
From the Sobolev Embedding Theorem, for any $r\in[1, p_i^{\ast}]$ 
with $p_i^{\ast} = \frac{Np_i}{N-p_i}$ if $N>p_i$, 
or any $r\in[1, +\infty[$ if $p_i \ge N$, $W_i$ is
continuously embedded in $L^{r}(\Omega)$, i.e. a positive constant 
$\tau_{i,r}$ exists such that
\begin{equation}   \label{Sobpi}
\vert \xi\vert_r \leq\tau_{i,r}\Vert \xi\Vert_{W_i} \quad \mbox{ for all } \xi\in W_i.
\end{equation}
For simplicity, we put
\[
p^*_i = +\infty\quad \hbox{and} \quad \frac{1}{p^*_i} = 0
\qquad \hbox{if $p_i \ge N$.}
\]

\begin{definition}
A function $f:\Omega\times\R^m\to\R$, $m \in \N$, is a $\mathcal{C}^{k}$--Carath\'eodory function, 
$k\in\N\cup\lbrace 0\rbrace$, if
\begin{itemize}
\item $f(\cdot, \nu) : x \in \Omega \mapsto f(x, \nu) \in \R$ is measurable for all $\nu \in \R^m$,
\item $f(x,\cdot) : \nu \in \R^m \mapsto f(x, \nu) \in \R$ is $\mathcal{C}^k$ for a.e. $x \in \Omega$.
\end{itemize}
\end{definition}

Let $A:(x,u) \in \Omega\times\R \mapsto A(x,u) \in \R$ and $B:(x,v) \in \Omega\times\R \mapsto B(x,v)\in \R$
be two given functions and assume that $G : \Omega \times \R\times \R \to \R$ exists
such that the following conditions hold:
\begin{itemize}
\item[$(h_0)$]
$A$ and $B$ are $\mathcal{C}^1$--Carath\'eodory functions with partial derivatives as in \eqref{AuAv};
\item[$(h_1)$] for any $\rho > 0$ we have that
\begin{align*}
&\sup_{\vert u\vert\leq \rho} \vert A\left(\cdot, u\right)\vert \in L^{\infty}\left(\Omega\right), \ 
\quad \sup_{\vert v\vert\leq \rho} \vert B\left(\cdot, v\right)\vert \in L^{\infty}\left(\Omega\right),\\
&\sup_{\vert u\vert\leq \rho} \vert A_u\left(\cdot, u\right)\vert \in L^{\infty}\left(\Omega\right), 
\quad \sup_{\vert v\vert\leq \rho} \vert B_v\left(\cdot, v\right)\vert \in L^{\infty}\left(\Omega\right);
\end{align*}
\item[$(g_0)$]
$G$ is a $\mathcal{C}^1$--Caratheodory function with partial derivatives as in \eqref{GuGv} 
such that
\[
G(\cdot, 0, 0) \in L^\infty(\Omega)
\] 
and
\[
G_u(x, 0, 0) = G_v(x, 0, 0) = 0 \quad \mbox{ for a.e. } x\in\Omega;
\] 
\item[$(g_1)$] some real numbers $q_i \geq 1$,
$s_i \geq 0$ if $i \in \{1,2\}$, and $\sigma >0$ exist such that
\begin{eqnarray}      \label{gucr}
&&\vert G_u(x,u,v)\vert\leq \sigma(1 +\vert u\vert^{q_1 -1} +\vert v\vert^{s_1})
\quad \mbox{ for a.e. $x\in\Omega$, all $(u,v) \in \R^2$,}\\
\label{gvcr}
&&\vert G_v(x,u,v)\vert\leq \sigma(1 +\vert u\vert^{s_2} +\vert v\vert^{q_2 -1})
\quad \mbox{ for a.e. $x\in\Omega$, all $(u,v) \in \R^2$,}
\end{eqnarray}
with
\begin{equation}    \label{crit_exp}
1 \le q_1 < p_1^{\ast},\qquad 1 \le q_2 < p_2^{\ast},
\end{equation}
and
\begin{equation}    \label{crit_expi}
0 \le s_1 < \frac{p_1}{N} \left(1 - \frac{1}{p^*_1}\right) p_2^*, \qquad
0 \le s_2 < \frac{p_2}{N} \left(1 - \frac{1}{p^*_2}\right) p_1^*.
\end{equation}
\end{itemize}

\begin{remark}
We note that the subcritical growth assumptions \eqref{crit_exp} and \eqref{crit_expi}
are required in order to prove the $(wCPS)$ condition (see Proposition \ref{PropwCPS})
but not the variational principle stated in Proposition \ref{smooth1}.\\
Moreover, conditions $(g_0)$, \eqref{gucr} and \eqref{gvcr} imply the
existence of a constant $\sigma_1 > 0$ such that
\begin{equation}    \label{Gsigma}
\vert G(x,u,v)\vert\leq \sigma_1\left(1 + \vert u\vert +\vert u\vert^{q_1} + \vert u\vert \vert v\vert^{s_1} 
+ \vert v\vert +\vert u\vert^{s_2}\vert v\vert + \vert v\vert^{q_2}\right)
\end{equation}
for a.e. $x\in\Omega$, all $(u,v)\in \R^2$.\\
In fact, from $(g_0)$, \eqref{gucr} and \eqref{gvcr}, 
for a.e. $x\in\Omega$ and for any $(u, v)\in\R^2$ 
the Mean Value Theorem implies the existence of $t\in\left] 0, 1\right[$ such that
\begin{align*}
\vert G(x,u,v)\vert &\le \vert G(x,u,v) - G(x,0,0)\vert + \vert G(x,0,0)\vert 
= \vert G_u(x, tu, tv) u + G_v(x, tu, tv) v\vert + \vert G(x,0,0)\vert\\
&\leq\sigma\left( 1 +\vert u\vert^{q_1 -1} +\vert v\vert^{s_1}\right)\vert u\vert 
+\sigma\left(1 +\vert u\vert^{s_2} +\vert v\vert^{q_2 -1}\right)\vert v\vert
+ \vert G(\cdot,0,0)\vert_\infty
\end{align*}
and so \eqref{Gsigma} follows with $\sigma_1 = \max\{\sigma,\vert G(\cdot,0,0)\vert_\infty\}$.
\end{remark}

Here, the notation introduced for the abstract
setting at the beginning of Section \ref{abstractsection}
is referred to our problem with 
\begin{equation}     \label{Wdefn1}
W = W_1\times W_2
\end{equation}
while the Banach space $(X,\|\cdot\|_X)$ is defined as 
\begin{equation}     \label{Xdefn1}
X = X_1\times X_2, 
\end{equation}
where
\[
X_1:= W_1\cap L^{\infty}(\Omega) \quad \mbox{ and }\quad X_2:= W_2\cap L^{\infty}(\Omega)
\]
with the norms
\begin{equation}         \label{Xnorms}
\Vert u\Vert_{X_1} = \Vert u\Vert_{W_1} + \vert u\vert_{\infty} \mbox{ if } u\in X_1 \quad 
\mbox{ and } \quad \Vert v\Vert_{X_2} = \Vert v\Vert_{W_2} + \vert v\vert_{\infty} \ \mbox{ if } v\in X_2.
\end{equation}
Since $(W_i, \Vert\cdot\Vert_{W_i})$ is a reflexive Banach space
for both $i=1$ and $i=2$, 
so is $\left(W, \Vert\cdot\Vert_W\right)$ where, for $(u, v)\in W$ it is
\[
\Vert(u, v)\Vert_W = \Vert u\Vert_{W_1} + \Vert v\Vert_{W_2}.
\]
Setting 
\[
L:= L^{\infty}(\Omega)\times L^{\infty}(\Omega)\quad \hbox{with}\; 
\Vert (u, v)\Vert_{L} = \vert u\vert_{\infty} + \vert v\vert_{\infty},
\]
we have that $X$ in \eqref{Xdefn1} can also be written as 
\[
X = W\cap L 
\]
and can be equipped with the norm
\[
\Vert(u, v)\Vert_X = \Vert (u, v)\Vert_W + \Vert (u, v)\Vert_L 
= \Vert u\Vert_{X_1} + \Vert v\Vert_{X_2}.
\]
By definition, for $i \in \{1, 2\}$ we have $X_i \hookrightarrow W_i$ and $X_i \hookrightarrow L^\infty(\Omega)$
with continuous embeddings. 

\begin{remark}
If  $i \in \{1, 2\}$ is such that $p_i > N$, then $X_i = W_i$, as $W_i \hookrightarrow L^\infty(\Omega)$.
Hence, if both $p_1 > N$ and $p_2 > N$, 
then $X = W_1 \times W_2$ and
the classical Mountain Pass Theorems in \cite{AR} can be used, if required.
\end{remark}

Now, if conditions $(h_0)$--$(h_1)$, $(g_0)$, \eqref{gucr} and \eqref{gvcr} hold,
by direct computations it follows that $\J(u,v)$ in \eqref{functional}
is well defined for all $(u, v)\in X$. Moreover,
taking any $(u, v)$, $(w, z)\in X$, the G\^ateaux differential 
of functional $\J$ in $(u,v)$ along the direction $(w,z)$ is
\begin{equation}     \label{diff}
\begin{split}
d\J(u, v)[(w, z)] = &\int_{\Omega} A(x, u)\vert\nabla u\vert^{p_1 -2}\nabla u\cdot \nabla w \ dx + 
\frac{1}{p_1}\int_{\Omega} A_u(x, u) w \vert\nabla u\vert^{p_1} dx \\    
&+ \int_{\Omega} B(x, v)\vert\nabla v\vert^{p_2 -2} \nabla v\cdot \nabla z \ dx+ 
\frac{1}{p_2}\int_{\Omega} B_v(x, v) z \vert\nabla v\vert^{p_2}dx\\     
&-\int_{\Omega} G_u(x, u, v) w \ dx - \int_{\Omega} G_v(x, u, v) z \ dx.
\end{split}
\end{equation}
For simplicity, we put
\[
\begin{split}
&\frac{\partial\J}{\partial u}(u, v): w\in X_1\mapsto \frac{\partial\J}{\partial u}(u, v)[w] = d\J(u, v)[(w, 0)]\in\R, \\
&\frac{\partial\J}{\partial v}(u, v): z\in X_2\mapsto \frac{\partial\J}{\partial v}(u, v)[z] = d\J(u, v)[(0, z)]\in\R.
\end{split}
\]
So, from \eqref{diff} it follows that
\begin{equation}     \label{dJw}
\begin{split}
\frac{\partial\J}{\partial u}(u, v) [w] = &\int_{\Omega} A(x, u)\vert\nabla u\vert^{p_1 -2} \nabla u\cdot\nabla w\ dx 
+ \frac{1}{p_1}\int_{\Omega} A_u(x, u) w \vert\nabla u\vert^{p_1} dx\\
&- \int_{\Omega} G_u(x, u, v) w\ dx
\end{split}
\end{equation}
and
\begin{equation}    \label{dJz}
\begin{split}
\frac{\partial\J}{\partial v}(u, v) [z] = 
&\int_{\Omega} B(x, v)\vert\nabla v\vert^{p_2 -2} \nabla v\cdot\nabla z\ dx 
+ \frac{1}{p_2}\int_{\Omega} B_v(x, v) z \vert\nabla v\vert^{p_2} dx\\
&- \int_{\Omega} G_v(x, u, v) z dx.
\end{split}
\end{equation}

\begin{remark}
Taking $(u, v)\in X$, since $d\J(u, v)\in X^{\prime}$, 
then 
\[
\frac{\partial\J}{\partial u}(u, v)\in X_1^{\prime}, \qquad
\frac{\partial\J}{\partial v}(u, v)\in X_2^{\prime}
\]
and
\begin{equation}    \label{dJz1}
d\J(u, v)[(w, z)] = \frac{\partial\J}{\partial u}(u, v)[ w] 
+ \frac{\partial\J}{\partial v}(u, v)[z] \quad \forall (w, z)\in X.
\end{equation}
Moreover, direct computations imply that
\begin{equation}    \label{dJzstar}
\left\|\frac{\partial\J}{\partial u}(u, v)\right\|_{X_1'} \le \|d\J(u, v)\|_{X'},\quad
\left\|\frac{\partial\J}{\partial v}(u, v)\right\|_{X_2'} \le \|d\J(u, v)\|_{X'},  
\end{equation}
and 
\begin{equation}    \label{dJbis}
\|d\J(u, v)\|_{X'} \le \left\|\frac{\partial\J}{\partial u}(u, v)\right\|_{X_1'} +
\left\|\frac{\partial\J}{\partial v}(u, v)\right\|_{X_2'}.
\end{equation}
Clearly, we have
\[
d\J(u, v) = 0 \;\hbox{in $X$}\quad\iff\quad
\frac{\partial\J}{\partial u}(u, v)= 0 \;\hbox{in $X_1$}
\;\hbox{and}\; \frac{\partial\J}{\partial v}(u,v) = 0 \;\hbox{in $X_2$.} 
\]
\end{remark}

As useful in the following, we recall this technical lemma (see \cite{GL}).

\begin{lemma}      \label{lemmavett}
A constant $C > 0$ exists such that for any $\xi, \eta\in\R^N, N\geq 1$, it results
\[
\begin{split}
&\vert \vert \xi\vert^{r-2} \xi -\vert \eta\vert^{r-2} \eta\vert\ \leq\ 
C \vert \xi -\eta\vert\left(\vert \xi\vert +\vert \eta\vert\right)^{r-2} \qquad\hbox{ if } r > 2,\\
&\vert \vert \xi\vert^{r-2} \xi -\vert \eta\vert^{r-2} \eta\vert
\leq C \vert \xi -\eta\vert^{r-1} \ \qquad \qquad\qquad\hbox{ if } 1< r\leq 2.
\end{split}
\]
\end{lemma}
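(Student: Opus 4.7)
Both estimates will come from the fundamental theorem of calculus applied to the vector field $F:\R^N\to\R^N$, $F(\zeta):=\vert\zeta\vert^{r-2}\zeta$, along the affine segment $\zeta_t:=(1-t)\eta+t\xi$, $t\in[0,1]$. A direct computation shows that, on $\R^N\setminus\{0\}$,
\[
DF(\zeta)\;=\;\vert\zeta\vert^{r-2}\,\mathbb{I}\;+\;(r-2)\vert\zeta\vert^{r-4}\,\zeta\otimes\zeta,
\]
whose operator norm is bounded above by $\max(1,r-1)\,\vert\zeta\vert^{r-2}$; hence, whenever the segment avoids the origin, one has
\[
\vert F(\xi)-F(\eta)\vert\;\le\;C(r)\,\vert\xi-\eta\vert\int_0^1 \vert\zeta_t\vert^{r-2}\,dt.
\]

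\smallskip

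\noindent \textbf{Case $r>2$.} Since $r-2\ge 0$, the triangle inequality gives $\vert\zeta_t\vert\le(1-t)\vert\eta\vert+t\vert\xi\vert\le\vert\xi\vert+\vert\eta\vert$, whence $\vert\zeta_t\vert^{r-2}\le(\vert\xi\vert+\vert\eta\vert)^{r-2}$; integrating in $t$ produces the first inequality at once. The segment passing through $0$ is not an issue, since for $r>2$ the map $F$ extends to a $C^1$ map on all of $\R^N$ with $DF(0)=0$.

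\smallskip

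\noindent \textbf{Case $1<r\le 2$.} Now $\vert\zeta_t\vert^{r-2}$ can be singular near $0$, so I would argue by a case split. Assume, without loss of generality, $\vert\xi\vert\ge\vert\eta\vert$. If $\vert\xi-\eta\vert\ge\vert\xi\vert/2$, then $\vert\xi\vert,\vert\eta\vert\le 2\vert\xi-\eta\vert$, and the crude bound
\[
\vert F(\xi)-F(\eta)\vert\;\le\;\vert\xi\vert^{r-1}+\vert\eta\vert^{r-1}\;\le\;2^{r}\,\vert\xi-\eta\vert^{r-1}
\]
already suffices. If instead $\vert\xi-\eta\vert<\vert\xi\vert/2$, the reverse triangle inequality forces $\vert\zeta_t\vert\ge\vert\xi\vert-(1-t)\vert\xi-\eta\vert>\vert\xi\vert/2$ for every $t\in[0,1]$; hence $F$ is $C^1$ along the segment, $\vert\zeta_t\vert^{r-2}\le(\vert\xi\vert/2)^{r-2}$, and the FTC bound gives
\[
\vert F(\xi)-F(\eta)\vert\;\le\;C\,\vert\xi-\eta\vert\,\vert\xi\vert^{r-2}\;=\;C\,\vert\xi-\eta\vert^{r-1}\Bigl(\tfrac{\vert\xi-\eta\vert}{\vert\xi\vert}\Bigr)^{2-r}\;\le\;C\,\vert\xi-\eta\vert^{r-1},
\]
where the last step uses $2-r\ge 0$ together with $\vert\xi-\eta\vert/\vert\xi\vert<1/2\le 1$.

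\smallskip

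\noindent \textbf{Main obstacle.} The only delicate point is the sub-quadratic regime $1<r\le 2$, where the integrand $\vert\zeta_t\vert^{r-2}$ is singular whenever the segment meets the origin, so a direct FTC argument breaks down. The case split resolves this because the singular regime (small $\vert\zeta_t\vert$) is precisely the one in which $\vert\xi-\eta\vert$ must be comparable to $\vert\xi\vert$, and the trivial estimate $\vert F(\xi)-F(\eta)\vert\le\vert\xi\vert^{r-1}+\vert\eta\vert^{r-1}$ already delivers the desired Hölder bound there; in the complementary regime the segment stays uniformly far from $0$ and the FTC bound can be safely used.
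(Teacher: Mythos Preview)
Your argument is correct. The paper does not actually prove this lemma: it simply recalls it as a known technical result and cites \cite{GL}, so there is no ``paper's own proof'' to compare against; your self-contained route via the fundamental theorem of calculus applied to $F(\zeta)=\vert\zeta\vert^{r-2}\zeta$, together with the case split handling the possible singularity of $\vert\zeta_t\vert^{r-2}$ in the sub-quadratic range, is a standard and entirely valid way to obtain both inequalities.
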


From Lemma \ref{lemmavett} and reasoning as in \cite[Lemma 3.4]{CS2020}, we have the following estimate.

\begin{lemma}   \label{lemma2020}
If $p>1$, a constant $C = C(p) >0$ exists such that
\[
\int_{\Omega} \vert \vert\nabla w\vert^p -\vert\nabla z\vert^p\vert dx 
\leq C \Vert w-z\Vert_{W_0^{1, p}}\left(\Vert w\Vert_{W_0^{1, p}}^{p-1} 
+\Vert z\Vert_{W_0^{1, p}}^{p-1}\right) \quad \forall w, z \in W_0^{1, p}(\Omega).
\]
\end{lemma}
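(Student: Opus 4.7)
The plan is to reduce everything to a pointwise inequality followed by H\"older.

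First, I would establish the pointwise bound: for $p > 1$ and $a, b \ge 0$, the mean value theorem applied to $t \mapsto t^p$ on $[\min(a,b), \max(a,b)]$ gives
\[
|a^p - b^p| \le p \, \max(a,b)^{p-1} |a - b| \le p\bigl(a^{p-1} + b^{p-1}\bigr)|a-b|.
\]
Applying this with $a = |\nabla w(x)|$ and $b = |\nabla z(x)|$, and then using the reverse triangle inequality $\bigl||\nabla w| - |\nabla z|\bigr| \le |\nabla w - \nabla z|$, we get pointwise a.e. in $\Omega$
\[
\bigl| |\nabla w|^p - |\nabla z|^p \bigr| \ \le\ p\bigl(|\nabla w|^{p-1} + |\nabla z|^{p-1}\bigr)\,|\nabla w - \nabla z|.
\]

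Next, I integrate and apply H\"older's inequality with conjugate exponents $p$ and $p' = p/(p-1)$:
\[
\int_{\Omega} \bigl| |\nabla w|^p - |\nabla z|^p \bigr| dx
\ \le\ p\, |\nabla w - \nabla z|_p \cdot \Bigl(\int_\Omega \bigl(|\nabla w|^{p-1} + |\nabla z|^{p-1}\bigr)^{p/(p-1)} dx \Bigr)^{(p-1)/p}.
\]
Using the elementary bound $(\alpha+\beta)^{p/(p-1)} \le 2^{p/(p-1)}(\alpha^{p/(p-1)} + \beta^{p/(p-1)})$ with $\alpha = |\nabla w|^{p-1}$, $\beta = |\nabla z|^{p-1}$, the last integral is controlled by a constant times $|\nabla w|_p^p + |\nabla z|_p^p$. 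Finally, since $(p-1)/p \in (0,1)$, the subadditivity inequality $(s+t)^{(p-1)/p} \le s^{(p-1)/p} + t^{(p-1)/p}$ yields
\[
\bigl(|\nabla w|_p^p + |\nabla z|_p^p\bigr)^{(p-1)/p} \ \le\ |\nabla w|_p^{p-1} + |\nabla z|_p^{p-1}\ =\ \|w\|_{W_0^{1,p}}^{p-1} + \|z\|_{W_0^{1,p}}^{p-1}.
\]
Combining these estimates and recalling $|\nabla w - \nabla z|_p = \|w-z\|_{W_0^{1,p}}$ gives the claim with $C = C(p) > 0$.

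There is essentially no real obstacle here: the argument is a standard chain of elementary inequalities (mean value theorem, reverse triangle, H\"older, and two uses of $(a+b)^\alpha$--type bounds with $\alpha$ on opposite sides of $1$). The only point requiring a bit of care is keeping track of which of the two ``$(a+b)^\alpha$'' estimates is needed at each step (superadditive vs.\ subadditive), dictated by whether the exponent is above or below $1$; this is precisely where $p > 1$ is used in a nontrivial way. The statement and a very similar proof are recorded in \cite[Lemma~3.4]{CS2020}, which I would cite.
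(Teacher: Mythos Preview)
Your proof is correct. The paper does not spell out a proof but points to Lemma~\ref{lemmavett} (the Glowinski--Marrocco vector inequality, with its case split at $p=2$) together with \cite[Lemma~3.4]{CS2020}; the intended argument is to bound $\bigl||\nabla w|^{p}-|\nabla z|^{p}\bigr|$ through the vector quantity $\bigl||\nabla w|^{p-2}\nabla w-|\nabla z|^{p-2}\nabla z\bigr|$ and then apply H\"older. Your route is slightly more elementary: by working with the scalar map $t\mapsto t^{p}$ via the mean value theorem and then invoking the reverse triangle inequality, you obtain the same pointwise bound in one stroke, without the $p>2$ versus $1<p\le 2$ distinction and without Lemma~\ref{lemmavett}. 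After that, the H\"older step and the two $(a+b)^{\alpha}$ estimates are exactly what the paper's reference \cite[Lemma~3.4]{CS2020} records, so the two arguments converge from that point on. The only cosmetic slip is the constant $2^{p/(p-1)}$ in your convexity bound, where $2^{1/(p-1)}$ would be sharp; this does not affect the conclusion since only some $C=C(p)$ is claimed.
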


Now, we can state a regularity result.

\begin{proposition}\label{smooth1}
Assume that conditions $(h_0)$--$(h_1)$, $(g_0)$, \eqref{gucr} and \eqref{gvcr} hold. 
Let $((u_n, v_n))_n \subset X$ and $(u, v) \in X$ be such that
\begin{align}     \label{unconv}
&u_n\to u \mbox{ in } W_1, \quad u_n\to u \mbox{ a.e. in } \Omega \mbox{ if } n\to\infty,\\    \label{vnconv}
&v_n\to v \mbox{ in } W_2, \quad v_n\to v \mbox{ a.e. in } \Omega \mbox{ if } n\to\infty, 
\end{align}
and, moreover, $M > 0$ exists such that
\begin{equation}       \label{unvnuniflim}
\vert u_n\vert_{\infty}\leq M \quad \mbox{ and } \quad \vert v_n\vert_{\infty}\leq M \quad \forall n\in\N.
\end{equation}
Then,
\[
\J(u_n, v_n) \to \J(u, v) \quad \mbox{ and } \quad 
\Vert d\J(u_n, v_n) - d\J(u, v)\Vert_{X^{\prime}}\to 0 \ \mbox{ as } n\to\infty.
\]
Hence, $\J$ is a $C^1$ functional on $X$ with Fr\'echet differential  
defined as in (\ref{diff}).
\end{proposition}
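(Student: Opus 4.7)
The plan is to argue in three stages: first, $\J(u_n, v_n) \to \J(u, v)$; second, $\|d\J(u_n, v_n) - d\J(u, v)\|_{X'} \to 0$; third, upgrading to Fr\'echet differentiability and $C^1$ regularity. Three tools will recur: the uniform $L^\infty$--bounds on $A(\cdot, u_n)$, $B(\cdot, v_n)$, $A_u(\cdot, u_n)$, $B_v(\cdot, v_n)$ given by $(h_1)$ combined with \eqref{unvnuniflim}; Lemma \ref{lemma2020}, yielding $|\nabla u_n|^{p_1} \to |\nabla u|^{p_1}$ in $L^1(\Omega)$ (and analogously for $v_n$); and dominated convergence applied to factors such as $A(x, u_n) - A(x, u)$, $A_u(x, u_n) - A_u(x, u)$ and $G(x, u_n, v_n) - G(x, u, v)$, which vanish a.e.\ and are uniformly bounded thanks to the continuity hypotheses, to $(h_1)$, to \eqref{unvnuniflim}, and to the growth estimate \eqref{Gsigma}. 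These three tools, combined term by term, give $\J(u_n, v_n) \to \J(u, v)$.

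For the differentials, by \eqref{dJbis} I would separately show $X_1'$--convergence of $\partial \J/\partial u$ and $X_2'$--convergence of $\partial \J/\partial v$. Testing against $w \in X_1$ with $\|w\|_{X_1} \le 1$ in \eqref{dJw}, the divergence term is controlled by H\"older's inequality by $\|\nabla w\|_{p_1} \le \|w\|_{X_1}$ times the $L^{p_1'}$--norm of
\[
A(x, u_n)\bigl(|\nabla u_n|^{p_1-2}\nabla u_n - |\nabla u|^{p_1-2}\nabla u\bigr) + \bigl(A(x, u_n) - A(x, u)\bigr)|\nabla u|^{p_1-2}\nabla u;
\]
the first summand tends to $0$ in $L^{p_1'}(\Omega)^N$ using Lemma \ref{lemmavett} and H\"older (splitting the cases $p_1 \ge 2$ and $1 < p_1 \le 2$), while the second vanishes by dominated convergence since $|\nabla u|^{p_1} \in L^1(\Omega)$. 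The remaining two terms in \eqref{dJw} are paired with $w$ itself rather than $\nabla w$, so I would bound them by $|w|_\infty \le \|w\|_{X_1}$ times an $L^1$--norm: the $A_u$--term vanishes by Lemma \ref{lemma2020} plus dominated convergence, and the $G_u$--term vanishes by dominated convergence, using that $|G_u(x, u_n, v_n)|$ is uniformly bounded via \eqref{gucr} and \eqref{unvnuniflim}. The argument for $\partial \J/\partial v$ is identical.

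The main obstacle is the two non-divergence integrals $\int_\Omega A_u(x, u_n) w |\nabla u_n|^{p_1} dx$ and $\int_\Omega G_u(x, u_n, v_n) w \, dx$: since $|\nabla u_n|^{p_1}$ lies only in $L^1(\Omega)$, no $\|w\|_{W_1}$--bound is possible and the $|w|_\infty$--summand of $\|w\|_{X_1}$ in \eqref{Xnorms} is unavoidable. This is exactly why the variational setting lives in $X$ and why the dual estimates \eqref{dJzstar}--\eqref{dJbis} are formulated in $X'$ instead of $W'$. To conclude $\J \in C^1(X, \R)$, any sequence $(u_n, v_n) \to (u, v)$ in $X$ automatically converges in $W$ and is bounded in $L^\infty$; extracting an a.e.\ convergent subsequence puts us in the hypotheses of the proposition, and a standard subsubsequence argument lifts the conclusion to the full sequence. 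A direct dominated-convergence computation on $t \in [-1, 1]$ shows \eqref{diff} is indeed the G\^ateaux differential of $\J$ and defines a continuous linear form on $X$; combined with the continuity in $(u, v)$ just established, the classical criterion that continuous G\^ateaux differentiability implies Fr\'echet differentiability yields $\J \in C^1(X, \R)$ with Fr\'echet derivative as in \eqref{diff}.
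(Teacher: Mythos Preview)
Your proposal is correct and follows essentially the same route as the paper's proof: both arguments split $\J$ and $d\J$ term by term, use Lemma~\ref{lemma2020} for the $L^1$--convergence of $|\nabla u_n|^{p_i}$, Lemma~\ref{lemmavett} (with the case distinction on $p_i$) together with H\"older for the $L^{p_i'}$--convergence of $A(x,u_n)|\nabla u_n|^{p_1-2}\nabla u_n$, dominated convergence for the coefficient differences $A(x,u_n)-A(x,u)$, $A_u(x,u_n)-A_u(x,u)$, $G_u(x,u_n,v_n)-G_u(x,u,v)$, and crucially exploit $|w|_\infty\le\|w\|_{X_1}$ to handle the non--divergence terms. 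Your final paragraph spelling out the subsubsequence argument and the continuous--G\^ateaux--implies--Fr\'echet criterion makes explicit what the paper leaves as a one--line ``Hence''.
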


\begin{proof}
Let $((u_n, v_n))_n\subset X$ and $(u,v)\in X$ be such 
that conditions \eqref{unconv}--\eqref{unvnuniflim} hold. Firstly, we prove that
\begin{equation}      \label{JtoJ}
\J\left(u_n, v_n\right)\longrightarrow \J\left(u, v\right) \quad \mbox{ if } n\to +\infty.
\end{equation}
To this aim, we note that
\begin{align*}
&\vert \J(u_n, v_n) - \J(u, v)\vert\ \leq\ 
\frac{1}{p_1}\int_{\Omega} \vert A(x, u_n)\vert\ \vert \vert\nabla u_n\vert^{p_1} -\vert\nabla u\vert^{p_1}\vert dx\\
&\qquad+ \frac{1}{p_1}\int_{\Omega}\left\vert A(x, u_n) - A(x, u)\right\vert \left\vert\nabla u\right\vert^{p_1}dx
+ \frac{1}{p_2}\int_{\Omega}\vert B(x, v_n)\vert\left\vert\left\vert\nabla v_n\right\vert^{p_2} 
-\left\vert\nabla v\right\vert^{p_2}\right\vert dx\\
&\qquad + \frac{1}{p_2}\int_{\Omega} \left\vert B(x, v_n) - B(x, v)\right\vert\left\vert\nabla v\right\vert^{p_2} dx
+ \int_{\Omega}\left\vert G(x, u_n, v_n) - G(x, u, v)\right\vert dx.
\end{align*}
From $(h_1)$, \eqref{unconv}, \eqref{unvnuniflim} and Lemma \ref{lemma2020} it follows that
\begin{equation}         \label{Aprima}
\int_{\Omega} \vert A(x, u_n)\vert \left\vert\left\vert\nabla u_n\right\vert^{p_1} - 
\left\vert\nabla u\right\vert^{p_1}\right\vert dx \longrightarrow 0.
\end{equation}
On the other hand, from Dominated Convergence Theorem
we have that
\begin{equation}      \label{Aseconda}
\int_{\Omega} \left\vert A(x, u_n) - A(x, u)\right\vert \left\vert\nabla u\right\vert^{p_1} dx \longrightarrow 0,
\end{equation}
as \eqref{unconv} and $(h_0)$ imply that 
\[
\left\vert A(x, u_n) - A(x, u)\right\vert\left\vert\nabla u\right\vert^{p_1}\longrightarrow 0 \quad \mbox{ a.e. in $\Omega$}
\]
and from \eqref{unvnuniflim} and $(h_1)$ it results
\begin{align*}
\left\vert A(x, u_n) - A(x, u)\right\vert\left\vert\nabla u\right\vert^{p_1} \leq C\left\vert\nabla u\right\vert^{p_1}\in L^1(\Omega).
\end{align*}
Similarly, by assumptions $(h_0)$--$(h_1)$, \eqref{vnconv}, \eqref{unvnuniflim} and again Lemma \ref{lemma2020}, we obtain that
\begin{align*}
\int_{\Omega} \vert B(x, v_n)\vert \left\vert\left\vert\nabla v_n\right\vert^{p_2} -
\left\vert\nabla v\right\vert^{p_2}\right\vert dx \to 0, \quad 
\int_{\Omega}\left\vert B(x, v_n) - B(x, v)\right\vert\left\vert\nabla v\right\vert^{p_2}dx \to 0.
\end{align*}
Finally, \eqref{unconv}, \eqref{vnconv} and $(g_0)$ imply that
\[
G(x, u_n, v_n)\longrightarrow G(x, u, v) \quad \mbox{ a.e. in $\Omega$,}
\]
while, by conditions \eqref{Gsigma} and \eqref{unvnuniflim}, a constant $C > 0$ exists such that
\[
\left\vert G(x, u_n, v_n)\right\vert\leq C \quad \mbox{ for a.e. } x\in\Omega,
\]
thus, by Dominated Convergence Theorem we conclude that
\[
\int_{\Omega} G(x, u_n, v_n) dx \longrightarrow \int_{\Omega} G(x, u, v) dx
\]
and \eqref{JtoJ} holds.\\
Now, in order to prove that
\[
\|d\J(u_n, v_n) - d\J(u, v)\|_{X'} \longrightarrow 0,
\]
we note that it is enough to prove that
\begin{align}\label{uno1}
&\left\|\frac{\partial\J}{\partial u}(u_n,v_n) - \frac{\partial\J}{\partial u}(u, v)\right\|_{X_1'} \longrightarrow 0,\\
&\left\|\frac{\partial\J}{\partial v}(u_n,v_n) - \frac{\partial\J}{\partial v}(u, v)\right\|_{X_2'} \longrightarrow 0, \label{due2}
\end{align}
as from \eqref{dJz1} it follows that
\[
\|d\J(u_n, v_n) - d\J(u, v)\|_{X'} \le 
\left\|\frac{\partial\J}{\partial u}(u_n,v_n) - \frac{\partial\J}{\partial u}(u, v)\right\|_{X_1'} +
\left\|\frac{\partial\J}{\partial v}(u_n,v_n) - \frac{\partial\J}{\partial v}(u, v)\right\|_{X_2'}.
\]
To this aim, firstly let $w \in X_1$ be such that $\|w\|_{X_1} \le 1$.
So, from \eqref{dJw}, condition $\vert w\vert_{\infty}\leq 1$ implies that
\begin{align*}
&\left\vert\frac{\partial\J}{\partial u}(u_n,v_n)[w] - \frac{\partial\J}{\partial u}(u, v)[w]\right\vert 
\leq \int_{\Omega} \vert A(x, u_n)\vert\ \vert\vert\nabla u_n\vert^{p_1 - 2} \nabla u_n - 
\vert\nabla u\vert^{p_1 - 2} \nabla u\vert\ \vert\nabla w\vert dx\\
&\qquad + \int_{\Omega}\vert A(x, u_n) - A(x, u)\vert\ \vert\nabla u\vert^{p_1 - 1} \vert \nabla w\vert dx\\ 
&\qquad + \frac{1}{p_1} \int_{\Omega}\vert A_{u}(x, u_n)\vert\ \vert\vert\nabla u_n\vert^{p_1} - \vert\nabla u\vert^{p_1}\vert dx
+ \frac{1}{p_1}\int_{\Omega} \vert A_{u}(x, u_n) - A_{u}(x, u)\vert\ \vert\nabla u\vert^{p_1} dx \\
&\qquad +\int_{\Omega}\vert G_u(x, u_n, v_n) - G_u(x, u, v)\vert dx.
\end{align*}
From assumptions $(g_0)$, \eqref{gucr}, \eqref{gvcr}, conditions \eqref{unconv}--\eqref{unvnuniflim} and again Dominated Convergence Theorem 
it follows that
\[
\int_{\Omega} \vert G_{u}(x, u_n, v_n) - G_{u}(x, u, v)\vert dx\longrightarrow 0.
\]
Moreover, with arguments which are similar to those ones used previously up for proving \eqref{Aprima}, 
respectively \eqref{Aseconda}, we have that
\[
\int_{\Omega}\vert A_{u}(x, u_n)\vert\ \vert\vert\nabla u_n\vert^{p_1} - \vert\nabla u\vert^{p_1}\vert dx \longrightarrow 0,
\]
respectively
\[
\int_{\Omega} \vert A_{u}(x, u_n) - A_{u}(x, u)\vert\ \vert\nabla u\vert^{p_1} dx \longrightarrow 0.
\]
Furthermore, since $\Vert w\Vert_{W_1}\leq 1$, by using Hölder inequality we have that
\[
\int_{\Omega}\vert A(x, u_n) - A(x, u)\vert \vert\nabla u\vert^{p_1-1} \vert\nabla w\vert dx
\leq\left(\int_{\Omega}\vert A(x, u_n) - A(x, u)\vert^{\frac{p_1}{p_1 - 1}} \vert\nabla u\vert^{p_1} dx\right)^{\frac{p_1 -1}{p_1}},
\]
where
\[
\int_{\Omega} \vert A(x, u_n) - A(x, u)\vert^{\frac{p_1}{p_1 - 1}} \vert\nabla u\vert^{p_1} dx \longrightarrow 0 
\]
as \eqref{unconv} and hypothesis $(h_0)$ imply
\[
\vert A(x, u_n) - A(x, u)\vert^{\frac{p_1}{p_1 -1}} \vert\nabla u\vert^{p_1}\longrightarrow 0 \quad \mbox{ a.e. in } \Omega
\]
while from \eqref{unvnuniflim} and $(h_1)$ it follows that
\[
\vert A(x, u_n) - A(x, u)\vert^{\frac{p_1}{p_1 -1}} \vert\nabla u\vert^{p_1}\leq C\vert\nabla u\vert^{p_1} \quad \mbox{ a.e. in } \Omega
\]
so Dominated Convergence Theorem applies.\\
At last, hypothesis $(h_1)$, \eqref{unvnuniflim} and Hölder inequality imply that
\begin{equation}\label{gradHol}
\begin{split}
&\int_{\Omega}\vert A(x, u_n)\vert\ \vert \vert\nabla u_n\vert^{p_1 -2}\nabla u_n 
- \vert\nabla u\vert^{p_1 -2}\nabla u\vert\ \vert\nabla w\vert dx\\     
&\quad\qquad\leq C\left(\int_{\Omega} \vert \vert\nabla u_n\vert^{p_1 -2}\nabla u_n - 
\vert\nabla u\vert^{p_1 -2}\nabla u\vert^{\frac{p_1}{p_1 -1}} dx\right)^{\frac{p_1 -1 }{p_1}}.
\end{split}
\end{equation}
We note that if $p_1 > 2$, then Lemma \ref{lemmavett} and again Hölder inequality, 
with conjugate exponents $p_1 -1$ and $\frac{p_1 -1}{p_1 -2}$, give
\begin{equation}     \label{grad1lemma}
\begin{split}
&\left(\int_{\Omega} \vert \vert\nabla u_n\vert^{p_1 -2}\nabla u_n - 
\vert\nabla u\vert^{p_1 -2}\nabla u\vert^{\frac{p_1}{p_1 -1}} dx\right)^{\frac{p_1 -1 }{p_1}}\\
&\qquad\leq \ C\ \left(\int_{\Omega} \vert\nabla u_n -\nabla u\vert^{\frac{p_1}{p_1 -1}}
\left(\vert\nabla u_n\vert +\vert\nabla u\vert\right)^{\frac{p_1(p_1 -2)}{p_1 -1}} dx\right)^{\frac{p_1 -1}{p_1}}\\
&\qquad\leq \ C\ \left(\int_{\Omega}\vert\nabla u_n -\nabla u\vert^{p_1} dx\right)^{\frac{1}{p_1}} 
\left(\int_{\Omega}\left(\vert\nabla u_n\vert +\vert\nabla u\vert\right)^{p_1} dx\right)^{\frac{p_1 -2}{p_1}}\\
&\qquad\leq\ C\ \Vert u_n -u\Vert_{W_1}
\end{split}
\end{equation}
as \eqref{unconv} implies that $(\|u_n\|_{W_1})_n$ is bounded.\\
On the other hand, if $1 <p_1\leq 2$, from Lemma \ref{lemmavett} we have
\begin{equation}    \label{grad2lemma}
\left(\int_{\Omega} \vert \vert\nabla u_n\vert^{p_1 -2}\nabla u_n -
\vert\nabla u\vert^{p_1 -2}\nabla u\vert^{\frac{p_1}{p_1 -1}} dx\right)^{\frac{p_1 -1}{p_1}}
\leq\ C\ \Vert u_n -u\Vert_{W_1}^{p_1 -1}.
\end{equation}
Hence, from \eqref{unconv} and \eqref{gradHol}--\eqref{grad2lemma} it follows that
\[
\int_{\Omega}\vert A(x, u_n)\vert\ \vert \vert\nabla u_n\vert^{p_1 -2}\nabla u_n - 
\vert\nabla u\vert^{p_1 -2}\nabla u\vert\ \vert\nabla w\vert dx\longrightarrow 0 
\quad \mbox{ uniformly with respect to $w$.}
\]
Thus, summing up, we have that
\[
\left\vert\frac{\partial\J}{\partial u}(u_n,v_n)[w] - \frac{\partial\J}{\partial u}(u, v)[w]\right\vert \longrightarrow 0 
\quad \mbox{ uniformly with respect to $w \in W_1$, $\|w\|_{X_1} \le 1$,}
\]
and \eqref{uno1} holds. \\
Finally, similar arguments allow us to prove that
\[
\left\vert\frac{\partial\J}{\partial v}(u_n,v_n)[z] - \frac{\partial\J}{\partial v}(u, v)[z]\right\vert \longrightarrow 0 
\quad \mbox{ uniformly with respect to $z \in X_2$,
$\|z\|_{X_2} \le 1$;}
\]
hence, \eqref{due2} is satisfied, too.
\end{proof}

\begin{remark}
Differently from the Sobolev Embedding Theorem, in our setting we have 
\[
X_i \hookrightarrow L^r(\Omega) \quad \forall r\geq 1\; \hbox{ and }\; i \in \{1, 2\}.
\] 
Hence, in the proof of Proposition \ref{smooth1} no sub--critical growth is required for $G_u(x,u,v)$ 
and $G_v(x,u,v)$.
\end{remark}


\section{The weak Cerami--Palais--Smale condition}
\label{sec_wcps}

In order to prove some more properties of functional $\J :X \to \R$
defined as in \eqref{functional}, 
we require that $R \geq 1$ exists such that the following conditions hold:
\begin{itemize}
\item[$(h_2)$] a constant $\mu_0 > 0$ exists such that
\[
A(x, u)\geq \mu_0 \ \mbox{ and } \ B(x, v)\geq\mu_0 \quad\mbox{ a.e. in } \Omega, \mbox{ for all } u, v\in\R;
\]
\item[$(h_3)$] there exists $\mu_1> 0$ such that
\[
\begin{split}
&A(x, u) +\frac{1}{p_1} A_u(x, u) u\geq \mu_1 A(x, u) \quad \mbox{ a.e. in } \Omega \mbox{ if } \vert u\vert\geq R,\\
&B(x, v) +\frac{1}{p_2} B_v(x, v) v\geq\mu_1 B(x, v) \quad \mbox{ a.e. in } \Omega \mbox{ if } \vert v\vert\geq R;
\end{split}
\]
\item[$(h_4)$] there exist $\theta_1$, $\theta_2$, $\mu_2 >0$ such that
\begin{equation}    \label{thetai}
\theta_1\ < \ \frac{1}{p_1}, \qquad
\theta_2\ < \ \frac{1}{p_2},
\end{equation}
and
\[
\begin{split}
&(1 - p_1  \theta_1) A(x, u) - \theta_1 A_u(x, u)u\ \geq\ \mu_2 A(x, u) \quad \mbox{a.e. in } \Omega, \mbox{ for all } u\in\R,\\
&(1 - p_2  \theta_2) B(x, v) - \theta_2 B_v(x, v)v\ \geq\ \mu_2 B(x, v) \quad \mbox{a.e. in } \Omega, \mbox{ for all } v\in\R;
\end{split}
\]
\item[$(g_2)$] taking $\theta_1$, $\theta_2$ as in $(h_4)$, we have
\[
0 < G(x, u, v) \leq \theta_1 G_u(x, u, v)u + \theta_2 G_v(x, u, v)v 
\quad \mbox{ a.e. in }\Omega \mbox{ if }\vert (u, v)\vert\geq R.
\]
\end{itemize}

\begin{remark}    \label{remmu1}
In $(h_3)$, respectively $(h_4)$, we can always assume that
\[
\mu_1 <1\quad\hbox{and}\quad
\mu_2 <\ \min\ \lbrace 1 - p_1 \theta_1, 1 - p_2 \theta_2\rbrace.
\] 
Hence, we have that
\[
\begin{split}
&-p_1(1-\mu_1) A(x, u)\ \leq\ A_u(x, u) u\ \leq\
 \frac{1}{\theta_1}\ (1 - p_1 \theta_1 -\mu_2)\ A(x, u) \quad\mbox{a.e. in } \Omega \mbox{ if } \vert u\vert\geq R,\\
&-p_2(1-\mu_1)B(x, v)\ \leq\ B_v(x, v) v
\ \leq\ \frac{1}{\theta_2}\ (1 - p_2 \theta_2 -\mu_2)\ B(x, v) \quad\mbox{a.e. in } \Omega \mbox{ if } \vert v\vert\geq R,
\end{split}
\]
which imply
\[
\begin{split}
&\vert A_u(x,u) u\vert\ \leq\ \gamma_1 A(x,u) \quad\mbox{a.e. in } \Omega \mbox{ if } \vert u\vert\geq R,\\
&\vert B_v(x,v) v\vert\ \leq\ \gamma_2 B(x,v) \quad\mbox{a.e. in } \Omega \mbox{ if } \vert v\vert\geq R,
\end{split}
\]
where $\gamma_i = \max\lbrace p_i(1-\mu_1),  \frac{1}{\theta_i}\ (1 - p_i \theta_i -\mu_2)\rbrace$, $i \in \{1,2\}$. 
Then, it results
\begin{equation}  \label{Auremark}
\begin{split}
&\vert A_u(x, u)\vert\leq\frac{\gamma_1}{R} A(x, u) \quad\mbox{a.e. in } \Omega \mbox{ if } \vert u\vert\geq R,\\
&\vert B_v(x, v)\vert\leq\frac{\gamma_2}{R} B(x, v) \quad\mbox{a.e. in } \Omega \mbox{ if } \vert v\vert\geq R. 
\end{split}
\end{equation}
On the other hand, from hypotheses $(h_0)$--$(h_2)$, $(h_4)$ and standard computations it follows that
\begin{equation}    \label{Atheta}
A(x, u)\leq a_1 + a_2 \vert u\vert^{ \frac{1}{\theta_1}\ (1 - p_1 \theta_1 -\mu_2)} 
\quad \mbox{ a.e. in } \Omega, \mbox{ for all } u\in\R,
\end{equation}
\begin{equation}    \label{Btheta}
B(x, v)\leq b_1 + b_2\vert v\vert^{ \frac{1}{\theta_2}\ (1 - p_2 \theta_2 -\mu_2)} 
\quad \mbox{ a.e. in } \Omega, \mbox{ for all } v\in\R,
\end{equation}
for suitable constants $a_1, a_2, b_1, b_2 >0$.
\end{remark}

\begin{example}\label{ex0}
Let $p_i > 1$, with $i \in\{1,2\}$, and consider
\begin{equation}\label{ex01}
A(x,u) \ =\ A_1(x) + A_2(x) |u|^{\gamma_1},\quad 
B(x,v) \ =\ B_1(x) + B_2(x) |v|^{\gamma_2}
\end{equation}
for a.e. $x \in \Omega$, all $u, v \in \R$, where 
\begin{equation}\label{ex05}
\gamma_i >1\qquad  \hbox{for $i \in \{1,2\}$,} 
\end{equation}
and
\begin{equation}\label{ex02}
A_i \in L^{\infty}(\Omega)\quad \hbox{and}\quad 
B_i \in L^{\infty}(\Omega)\qquad \hbox{for $i \in \{1,2\}$.}
\end{equation}
If we assume that a constant $\mu_0 > 0$ exists such that
\begin{equation}
\label{ex03}
A_1(x),\ B_1(x) \ge \mu_0,\qquad A_2(x),\ B_2(x) \ge 0\qquad \hbox{for a.e. $x \in \Omega$,}
\end{equation}
then conditions $(h_0)$--$(h_3)$ are satisfied with 
\[
A_u(x,u)\ = \ \gamma_1A_2(x) |u|^{\gamma_1-2} u,
\quad B_v(x,v) \ =\ \gamma_2 B_2(x) |v|^{\gamma_2 -2} v
\quad \hbox{for a.e. $x \in \Omega$, all $u, v \in \R$.} 
\]
Moreover, if we take
\begin{equation}\label{ex06}
0 < \theta_i < \frac{1}{p_i+\gamma_i}\quad  \hbox{for $i \in \{1,2\}$,} 
\end{equation}
also $(h_4)$ holds with
$\mu_2 = \min\{1-p_1\theta_1-\gamma_1\theta_1,\ 1-p_2\theta_2-\gamma_2\theta_2\} > 0$.
\end{example}

\begin{remark}  \label{rmkPerera0}
From $(g_0)$ and $(g_2)$ it follows that
\[
\begin{split}
&0 < G(x,u,0)\le \theta_1 G_u(x,u,0)u \quad \mbox{ for a.e. } x\in\Omega, \mbox{ if } |u| \ge R,\\
&0 < G(x,0,v)\le \theta_2 G_v(x,0,v)v \quad \mbox{ for a.e. } x\in\Omega, \mbox{ if } |v| \ge R.
\end{split}
\]
Hence, standard computations imply that
$h_i\in L^{\infty}(\Omega)$, $h_i(x) >0$ for a.a. $x\in\Omega$, $i \in \{1,2\}$,  
exist such that
\begin{eqnarray}
\label{rmkPerera}
G(x,u,0) &\geq& h_1(x) |u|^{\frac{1}{\theta_1}}\quad \mbox{ for a.e. } x\in\Omega, \mbox{ if } |u| \ge R,\\
G(x,0,v) &\geq& h_2(x) |v|^{\frac{1}{\theta_2}}\quad \mbox{ for a.e. } x\in\Omega, \mbox{ if } |v| \ge R.\nonumber
\end{eqnarray}
Thus, if also \eqref{gucr} and \eqref{gvcr} hold, from 
\eqref{thetai} and \eqref{Gsigma} it follows that 
\begin{equation}    \label{minmaxPerera}
p_1 <\ \frac{1}{\theta_1}\ \le q_1, \qquad
p_2 <\ \frac{1}{\theta_2}\ \le q_2.
\end{equation}
\end{remark}

\begin{example}\label{ex1}
Consider
\begin{equation}\label{ex11}
G(x,u,v) \ =\ |u|^{q_1} + c_* |u|^{\gamma_3} |v|^{\gamma_4} + |v|^{q_2}
\quad\hbox{for a.e. $x \in \Omega$, all $u, v \in \R$,}
\end{equation}
with $c_* \in \R$ and
\begin{equation}\label{ex13}
q_1 >1,\quad q_2 >1, \quad
\gamma_3 > 1,\quad \gamma_4 > 1.
\end{equation}
Then, condition $(g_0)$ is satisfied with 
\[
\begin{split}
&G_u(x,u,v)\ = \ q_1 |u|^{q_1-2} u + \gamma_3 c_* |u|^{\gamma_3-2} u |v|^{\gamma_4},\\
&G_v(x,u,v)\ = \ \gamma_4 c_* |u|^{\gamma_3} |v|^{\gamma_4-2} v + q_2 |v|^{q_2 -2} v
\end{split} 
\]
for a.e. $x \in \Omega$, all $u, v \in \R$.
Moreover, if also
\begin{equation}\label{ex17}
\gamma_3 < q_1\quad \hbox{and}\quad \gamma_4 < q_2, 
\end{equation}
then from Young inequality and direct computations it follows that
\eqref{gucr} and \eqref{gvcr} hold with
\begin{equation}\label{ex16}
\begin{split}
&s_1 = \gamma_4 \alpha_1,\quad \hbox{with}\;
\alpha_1 = \frac{q_1-1}{q_1 -\gamma_3} > 1\ \iff\  q_1 -1 = (\gamma_3 -1) \frac{\alpha_1}{\alpha_1 -1},\\
&s_2 = \gamma_3 \alpha_2, \quad \hbox{with}\; \alpha_2 = \frac{q_2-1}{q_2 -\gamma_4} > 1
\  \iff \  q_2 - 1 = (\gamma_4-1) \frac{\alpha_2}{\alpha_2 -1},
\end{split}
\end{equation}
for a suitable $\sigma > 0$.
At last, if 
\begin{equation}\label{ex12}
c_* \ge 0\qquad \hbox{and}\qquad
\frac{\gamma_3}{q_1} + \frac{\gamma_4}{q_2} \ge 1, 
\end{equation}
taking 
\begin{equation}\label{ex15}
 \theta_i\ \ge\ \frac{1}{q_i}\quad  \hbox{for $i \in \{1,2\}$,} 
\end{equation}
we have that also $(g_2)$ is verified.\\
We note that if $G(x,u,v)$ in \eqref{ex11}
has $c_* = 0$, then problem \eqref{euler} with such a nonlinear term
reduces to an \textsl{uncoupled system}. 
\end{example}

\begin{example}\label{ex2}
Consider
\begin{equation}\label{ex21}
G(x,u,v) \ =\ |u|^{q_1} + |u|^{\gamma_3} \lg(v^2+1) + \lg(u^2+1) |v|^{\gamma_4} + |v|^{q_2}
\quad\hbox{for a.e. $x \in \Omega$, all $u, v \in \R$,}
\end{equation}
with 
\begin{equation}\label{ex26}
q_1 >1,\quad q_2 >1, \quad
\gamma_3 > 1,\quad \gamma_4 > 1.
\end{equation}
Then, condition $(g_0)$ is satisfied with 
\[
\begin{split}
&G_u(x,u,v)\ = \ q_1 |u|^{q_1-2} u + \gamma_3 |u|^{\gamma_3-2} u \lg(v^2+1) +
\frac{2u}{u^2+1} |v|^{\gamma_4},\\
&G_v(x,u,v)\ = \ |u|^{\gamma_3} \frac{2v}{v^2+1} +
\gamma_4 \lg(1+u^2) |v|^{\gamma_4-2} v + q_2 |v|^{q_2 -2} v
\end{split} 
\]
for a.e. $x \in \Omega$, all $u, v \in \R$.
Moreover, assume also that
\begin{equation}\label{ex27}
\gamma_3 < q_1\quad \hbox{and}\quad \gamma_4 < q_2. 
\end{equation}
From \eqref{ex26} and \eqref{ex27}, taking any $\eps > 0$ such that
\begin{equation}\label{ex28}
\eps < \gamma_4 \frac{q_1 - \gamma_3}{q_1 - 1}\qquad \then\qquad
\eps < \gamma_4 \quad\hbox{and}\quad 0 < \frac{\gamma_3 \gamma_4 -\eps}{\gamma_4-\eps} \le q_1
\end{equation}
direct computations imply that a suitable constant $C > 0$ exists such that
\[
0 \le \lg(v^2+1) \le |v|^{\eps} + C\quad \hbox{for all $v \in\R$,}
\]
then from Young inequality applied to $\frac{\gamma_4}{\eps} > 1$
and \eqref{ex27}, \eqref{ex28} we have that
\[
\begin{split}
||u|^{\gamma_3-2} u \lg(v^2+1)| &\le |u|^{\gamma_3-1} |v|^{\eps} + C |u|^{\gamma_3-1}\\ 
&\le C (|u|^{\frac{\gamma_3 \gamma_4 -\eps}{\gamma_4-\eps} - 1} + |v|^{\gamma_4} + |u|^{\gamma_3-1}) 
\le C (1 + |u|^{q_1 - 1} + |v|^{\gamma_4}).
\end{split}
\]
Thus, by applying similar arguments also in order to estimate 
$\lg(1+u^2) |v|^{\gamma_4-2} v$ we prove that 
\eqref{gucr} and \eqref{gvcr} hold with 
\begin{equation}\label{ex23}
s_1 = \gamma_4\quad  \hbox{and}\quad s_2 = \gamma_3,
\end{equation}
for a suitable $\sigma > 0$.
At last, taking 
\begin{equation}\label{ex25}
 \theta_1\ \ge\ \frac{1}{\gamma_3}\quad  \hbox{and}\quad 
 \theta_2\ \ge\ \frac{1}{\gamma_4}
\end{equation}
we have that also $(g_2)$ is verified
as \eqref{ex27} implies $\theta_i q_i \ge 1$ for $i\in \{1,2\}$.
\end{example}

In order to prove the weak Cerami--Palais--Smale condition, we 
require the following boundedness result (for its proof, see \cite[Theorem II.5.1]{LU}).

\begin{lemma}    \label{Ladyz}
Let $\Omega$ be an open bounded subset of $\R^N$ and consider $\xi\in W_0^{1, p}(\Omega)$ with $p \le N$. 
Suppose that $\gamma > 0$ and $k_0\in\N$ exist such that 
\[
\int_{\Omega_{k}^{+}} \vert\nabla \xi\vert^p dx\ \leq\ 
\gamma\left( \int_{\Omega^{+}_{k}} (\xi - k)^r dx\right)^{\frac{p}{r}} 
+ \gamma \sum_{j=1}^m k^{\alpha_j} [\meas(\Omega^{+}_{k})]^{1-\frac{p}{N}+\varepsilon_j}\quad
\hbox{for all $k\geq k_0$,}
\]
with $\Omega^{+}_{k} :=\lbrace x\in\Omega : \xi(x) > k\rbrace$ and $r, m, \alpha_j, \varepsilon_j$ 
are positive constants such that
\[
1\leq r < p^{\ast}, \qquad \varepsilon_j > 0, \qquad p\leq\alpha_j < \varepsilon_j p^{\ast} + p.
\]
Then $\displaystyle\esssup_{\Omega} \xi$ is bounded from above by a positive constant 
which can be chosen so that it depends only on 
$\meas(\Omega), N, p, \gamma, k_0, r, m, \varepsilon_j, \alpha_j, \vert \xi\vert_{p^{\ast}}$ (eventually,
$\vert \xi\vert_{l}$ for some $l > r$ if $p^{\ast} = +\infty$).
\end{lemma}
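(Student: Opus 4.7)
The plan is to run a classical Stampacchia--De Giorgi iteration on the super--level sets of $\xi$. I would set $w_k := (\xi-k)^+ \in W_0^{1,p}(\Omega)$ and $\phi(k) := \meas(\Omega_k^+)$; then $\phi$ is non--increasing in $k$, and since $\xi \in L^{p^*}(\Omega)$ when $p<N$ (or $\xi \in L^l(\Omega)$ for a large finite $l$ when $p=N$), we have $\phi(k) \to 0$ as $k\to+\infty$ with the quantitative bound $\phi(k) \le |\xi|_{p^*}^{p^*} k^{-p^*}$ from Chebyshev inequality.

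First I would combine the Sobolev embedding $|w_k|_{p^*}^{p} \le C_S \int_{\Omega_k^+} |\nabla \xi|^p\,dx$ with H\"older's inequality (allowed since $r<p^*$) in the form
\[
\int_{\Omega_k^+}(\xi-k)^r\,dx \le |w_k|_{p^*}^r\,\phi(k)^{1-r/p^*},
\]
and insert both into the assumption of the lemma, producing
\[
|w_k|_{p^*}^p \le C_1\,\phi(k)^{(1-r/p^*)p/r}\,|w_k|_{p^*}^p + C_1 \sum_{j=1}^m k^{\alpha_j}\,\phi(k)^{1-p/N+\varepsilon_j}.
\]
Choosing $\tilde k \ge k_0$ so large that $C_1\phi(\tilde k)^{(1-r/p^*)p/r}\le 1/2$ (feasible by the Chebyshev bound, and this is where the dependence on $|\xi|_{p^*}$ enters), I would absorb the first term on the right and, using $(h-k)^{p^*}\phi(h) \le |w_k|_{p^*}^{p^*}$ for $h>k$, arrive at the basic recursion
\[
\phi(h) \le \frac{C_2}{(h-k)^{p^*}} \left(\sum_{j=1}^{m} k^{\alpha_j}\,\phi(k)^{1-p/N+\varepsilon_j}\right)^{p^*/p}
\]
for all $h > k \ge \tilde k$, which after the elementary inequality $\bigl(\sum a_j\bigr)^{p^*/p} \le C \sum a_j^{p^*/p}$ takes the canonical form $\phi(h) \le C_3 (h-k)^{-p^*} \sum_{j} k^{\alpha_j p^*/p}\,\phi(k)^{1+\delta_j}$, with $\delta_j := \varepsilon_j\,p^*/p > 0$.

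Next I would iterate along the geometric sequence $k_n := K(2 - 2^{-n})$, with $K \ge \tilde k$ to be chosen. Since $k_{n+1}-k_n = K 2^{-n-1}$ and $k_n \le 2K$, feeding this into the recursion yields
\[
\phi(k_{n+1}) \le C_4 \sum_{j=1}^m 2^{(n+1)p^*}\,K^{\alpha_j p^*/p - p^*}\,\phi(k_n)^{1+\delta_j}.
\]
The key point is that the hypothesis $\alpha_j < \varepsilon_j p^* + p$ is equivalent to $\alpha_j p^*/p - p^* < p^*\delta_j$, so combining it with $\phi(K) \le C K^{-p^*}$ makes $K^{\alpha_j p^*/p - p^*}\,\phi(K)^{\delta_j}$ arbitrarily small as $K \to +\infty$. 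A standard De Giorgi induction then yields $\phi(k_n)\le \phi(K)\,\lambda^{-n}$ for some $\lambda>1$, provided $K$ is chosen sufficiently large in terms of $\meas(\Omega)$, $N$, $p$, $\gamma$, $k_0$, $r$, $m$, the parameters $\varepsilon_j,\alpha_j$, and $|\xi|_{p^*}$ (respectively $|\xi|_l$ when $p=N$). Letting $n\to+\infty$ forces $\phi(2K)=0$, that is $\esssup_\Omega \xi \le 2K$, which is the desired bound.

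The main obstacle I anticipate is the bookkeeping at this last step: one must balance simultaneously the algebraic growth $k^{\alpha_j}$, the blow-up $(h-k)^{-p^*}$ produced by Chebyshev, and the superlinear decay $\phi^{1+\delta_j}$, and verify that a single $K$ depending only on the admissible data makes the induction hypothesis self--reproducing. The strict inequality $\alpha_j < \varepsilon_j p^* + p$ is precisely what renders $K^{\alpha_j p^*/p - p^*}\phi(K)^{\delta_j}\to 0$; were it to fail, the iteration would stall and no $L^\infty$ bound could be extracted.
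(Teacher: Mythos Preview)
Your argument is correct: this is precisely the De~Giorgi--Stampacchia level--set iteration that underlies the cited result, and the computations check out. In particular your identification $(1-p/N+\varepsilon_j)\,p^*/p = 1+\delta_j$ with $\delta_j=\varepsilon_j p^*/p$, and the observation that $\alpha_j<\varepsilon_j p^*+p$ is exactly the condition $\alpha_j p^*/p - p^* < p^*\delta_j$ needed to make $K^{\alpha_j p^*/p - p^*}\phi(K)^{\delta_j}\to 0$, are the heart of the matter and are handled correctly.

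The only comparison to make is that the paper does not prove this lemma at all: it simply invokes \cite[Theorem~II.5.1]{LU} (Ladyzhenskaya--Ural'tseva). What you have written is, in essence, a reconstruction of the argument behind that reference, so there is no substantive difference in approach, only in the level of detail supplied.
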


Up to now, no upper bound is required for the growth of the nonlinear term
$G(x,u,v)$. Anyway, subcritical assumptions need for proving the 
weak Cerami--Palais--Smale condition at any level, more precisely
we will require that the whole assumption $(g_1)$ is satisfied,
i.e., the exponents in \eqref{gucr} and \eqref{gvcr} are such that
\eqref{crit_exp} and \eqref{crit_expi} hold.

\begin{remark} \label{rmkcrit}
Consider $1 < p_1 < N$ and $1 < p_2 < N$. 
If $s_1 \in \R$ is as in \eqref{crit_expi},
then $s_3 > 0$ exists such that
\begin{equation} \label{crits11}
1< s_3 < p_1^*, \qquad 
0 \le s_4:=\ \frac{s_1 s_3}{s_3 - 1} < \ \frac{p_1}{N}\ p_2^* < p_2^*.
\end{equation}
In fact, \eqref{crit_expi} implies that
\[
p_1 p_2^* - N s_1 > 0\qquad \hbox{and}\qquad
1\ <\ \frac{p_1 p_2^*}{p_1 p_2^* - N s_1}\ <\ p^*_1,
\]
so $s_3 \in ]\frac{p_1 p_2^*}{p_1 p_2^* - N s_1}, p^*_1[$ exists such that
\eqref{crits11} holds.\\
Thus, from \eqref{crits11} and Young inequality it follows that
\begin{equation} \label{crits12}
|u|\ |v|^{s_1} \ \le \ \frac{|u|^{s_3}}{s_3}\ + \
\frac{s_3-1}{s_3}\ |v|^{s_4}\qquad \hbox{for all $(u,v) \in \R^2$}. 
\end{equation}
Similarly, if $s_2 \in \R$ is as in \eqref{crit_expi},
then $s_5 > 0$ exists such that
\begin{equation} \label{crits21}
1< s_5 < p_2^*, \qquad 
0 \le s_6:=\ \frac{s_2 s_5}{s_5 - 1} 
<\ \frac{p_2}{N} \ p_1^* < p_1^*,
\end{equation}
and
\[
|u|^{s_2}\ |v| \ \le \ \frac{s_5-1}{s_5}\ |u|^{s_6} +
\frac{|v|^{s_5}}{s_5} \qquad \hbox{for all $(u,v) \in \R^2$}. 
\]
\end{remark}

\begin{remark}   \label{rmkcrit1}
Taking $1 < p_1 < N$ and $1 < p_2 < N$, if $(g_0)$--$(g_1)$ hold, 
then from \eqref{Gsigma} and the notation and estimates in Remark \ref{rmkcrit} 
it follows that 
\[
\vert G(x,u,v)\vert\leq C(1+\vert u\vert +\vert u\vert^{q_1} +\vert u\vert^{s_3}  
+\vert u\vert^{s_6} +\vert v\vert +\vert v\vert^{q_2} +\vert v\vert^{s_4} +\vert v\vert^{s_5})
\]
a.e. in $\Omega$, for all $(u,v) \in \R^2$.
Hence, putting 
\begin{equation}    \label{Ggrow}
\overline{q}_1 =\max\lbrace q_1, s_3, s_6\rbrace\qquad
\hbox{and}\qquad
\overline{q}_2 = \max\lbrace q_2, s_4, s_5\rbrace, 
\end{equation}
up to change the constant $C > 0$ we have that
\begin{equation}    \label{Gsigmamax}
\vert G(x,u,v)\vert\ \leq\ C (1 + \vert u\vert +\vert v\vert +\vert u\vert^{\overline{q}_1} +\vert v\vert^{\overline{q}_2})
\quad \hbox{for a.e. $x \in \Omega$, all $(u,v) \in \R^2$}.
\end{equation}
At last, if also $(g_2)$ holds,
then from \eqref{minmaxPerera}, \eqref{crits11} and \eqref{crits21} 
it follows that
\begin{equation}    \label{minmaxPerera1}
p_1 <\ \frac{1}{\theta_1}\ \le \overline{q}_1 < p_1^{\ast}, \qquad
p_2 <\ \frac{1}{\theta_2}\ \le \overline{q}_2 < p_2^{\ast}.
\end{equation}
\end{remark}

\begin{proposition}     \label{PropwCPS}
Assume hypotheses $(h_0)$--$(h_4)$, $(g_0)$--$(g_2)$ hold. 
Then, functional $\J$ in \eqref{functional} satisfies condition $(wCPS)$ in $\R$.
\end{proposition}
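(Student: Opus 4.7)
The plan is to verify the $(wCPS)_\beta$ condition of Definition \ref{wCPS} at each $\beta \in \R$ via four successive steps: (i) bound any $(CPS)_\beta$-sequence in $W$; (ii) deduce a uniform $L^\infty$-bound through Lemma \ref{Ladyz}; (iii) upgrade weak $W$-convergence to strong convergence; (iv) identify the limit as a critical point via Proposition \ref{smooth1}. For step (i), given a Cerami sequence $((u_n,v_n))_n \subset X$, I would form the Ambrosetti--Rabinowitz-style combination
\[
\J(u_n,v_n)-\theta_1 \frac{\partial\J}{\partial u}(u_n,v_n)[u_n]-\theta_2 \frac{\partial\J}{\partial v}(u_n,v_n)[v_n]
\]
and expand it by means of \eqref{functional}, \eqref{dJw}, \eqref{dJz}. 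Hypothesis $(h_4)$ makes the $A$- and $B$-integrands exceed $\frac{\mu_2}{p_1}A(x,u_n)|\nabla u_n|^{p_1}$ and $\frac{\mu_2}{p_2}B(x,v_n)|\nabla v_n|^{p_2}$, while $(g_2)$ renders $\theta_1 G_u u_n+\theta_2 G_v v_n-G$ nonnegative on $\{|(u_n,v_n)|\ge R\}$ and bounded below on its complement through \eqref{Gsigma}; using $(h_2)$ to replace the coefficients by $\mu_0$ yields a lower bound $\frac{\mu_0\mu_2}{p_1}\|u_n\|_{W_1}^{p_1}+\frac{\mu_0\mu_2}{p_2}\|v_n\|_{W_2}^{p_2}-C$. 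The Cerami condition combined with $|\frac{\partial\J}{\partial u}(u_n,v_n)[u_n]|\le \|d\J(u_n,v_n)\|_{X'}\|u_n\|_{X_1}$ and \eqref{dJzstar} makes the whole combination equal to $\beta+o(1)$, so $((u_n,v_n))_n$ is bounded in $W$; reflexivity and Rellich--Kondrachov then deliver, up to a subsequence, $u_n \wk u$ in $W_1$, $v_n \wk v$ in $W_2$, with a.e.\ convergence and strong convergence in every Lebesgue space below the critical exponent.

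The hard part is step (ii). For $k \ge R$, I would test $\frac{\partial\J}{\partial u}(u_n,v_n)$ against $(u_n-k)^+ \in X_1$; on $\Omega_{k,n}^+:=\{u_n>k\}$, $(h_3)$ absorbs $A+\frac{1}{p_1}A_u u_n$ into a positive multiple of $A$, \eqref{Auremark} controls the residual $-\frac{k}{p_1}A_u$-contribution, and $(h_2)$ removes the coefficient $A$. The $G_u$-term is split via \eqref{gucr}: the $|u_n|^{q_1-1}$-piece is absorbed by H\"older since $q_1<p_1^*$, while the coupling $|v_n|^{s_1}$ is handled through the Young inequality \eqref{crits11}--\eqref{crits12} of Remark \ref{rmkcrit}, where $s_3<p_1^*$ lets $|u_n|^{s_3}$ be reabsorbed and $s_4<\frac{p_1}{N}p_2^*$ keeps $|v_n|^{s_4}$ strictly below the $L^{p_2^*}$-threshold already bounded by (i). The resulting Ladyzhenskaya--Ural'ceva estimate
\[
\int_{\Omega_{k,n}^+}|\nabla u_n|^{p_1}\,dx \le \gamma \Bigl(\int_{\Omega_{k,n}^+}(u_n-k)^r\,dx\Bigr)^{p_1/r} + \gamma \sum_j k^{\alpha_j}[\meas(\Omega_{k,n}^+)]^{1-\frac{p_1}{N}+\eps_j}
\]
has exponents $\alpha_j,\eps_j$ fitting Lemma \ref{Ladyz} precisely because of \eqref{crit_exp}--\eqref{crit_expi}, and delivers a uniform bound on $(u_n)^+$ in $L^\infty$. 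The symmetric argument applied to $-u_n$ and to $\pm v_n$ (with $s_5,s_6$ of \eqref{crits21}) yields $|u_n|_\infty+|v_n|_\infty \le M$ for every $n$, and Fatou's lemma places the weak limit $(u,v)$ in $X$.

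For step (iii), the test $\frac{\partial\J}{\partial u}(u_n,v_n)[u_n-u]=o(1)$ is now legitimate since $u_n-u$ is bounded in $X_1$ by (ii). The $G_u$-contribution vanishes by dominated convergence, the cross-term $\int A(x,u_n)|\nabla u|^{p_1-2}\nabla u\cdot\nabla(u_n-u)\,dx$ by weak convergence of $\nabla u_n$ against the strongly convergent factor $A(x,u_n)|\nabla u|^{p_1-2}\nabla u$ in $L^{p_1/(p_1-1)}$, and the $A_u$-term by Egorov combined with a Boccardo--Murat-type a.e.\ gradient convergence argument. What remains is
\[
\int_\Omega A(x,u_n)\bigl(|\nabla u_n|^{p_1-2}\nabla u_n-|\nabla u|^{p_1-2}\nabla u\bigr)\cdot\nabla(u_n-u)\,dx \to 0,
\]
which, since $A\ge\mu_0>0$, forces by the standard Browder $(S_+)$-argument with Lemma \ref{lemmavett} the a.e.\ convergence $\nabla u_n\to\nabla u$; Vitali's theorem together with the uniform $L^\infty$-bound then promotes this to $u_n\to u$ in $W_1$, and symmetrically $v_n\to v$ in $W_2$. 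Step (iv) is then immediate: Proposition \ref{smooth1} applies to yield $\J(u_n,v_n)\to\J(u,v)=\beta$ and $\|d\J(u_n,v_n)-d\J(u,v)\|_{X'}\to 0$, whence $d\J(u,v)=0$. The main technical obstacle is step (ii): the delicate constraints \eqref{crit_expi} on the coupling exponents $s_1,s_2$ are precisely what is needed to produce, via the Young splittings of Remark \ref{rmkcrit}, parameters $\alpha_j,\eps_j$ compatible with Lemma \ref{Ladyz}; without this bespoke interplay between $p_1^*$ and $p_2^*$ the $L^\infty$-bound would fail and the limit could escape $X$, collapsing the entire $(wCPS)$ framework.
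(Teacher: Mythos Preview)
Your step (i) is correct and matches the paper's argument. The genuine gap is in step (ii): you claim that Lemma \ref{Ladyz} delivers a \emph{uniform} bound $|u_n|_\infty+|v_n|_\infty\le M$ on the whole sequence, but in this setting $(CPS)$-sequences need not be bounded in $L^\infty(\Omega)$. The introduction of the paper cites an explicit example of a Palais--Smale sequence converging in $W_0^{1,p}$ yet unbounded in $L^\infty$, and this is precisely why the weakened $(wCPS)$ condition is introduced in the first place. Concretely, your Ladyzhenskaya estimate for $u_n$ carries the Cerami remainder $\frac{\partial\J}{\partial u}(u_n,v_n)[(u_n-k)^+]$; although this is $o_n(1)$ uniformly in $k$, for each fixed $n$ it is a constant in $k$ that does not fit the form required by Lemma \ref{Ladyz} (the right-hand side there must vanish with $\meas(\Omega_k^+)$), so the resulting $L^\infty$-bound depends on $n$. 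There is also a secondary issue in your treatment of the principal part: bounding the residual $-\frac{k}{p_1}A_u$ through \eqref{Auremark} yields a term of order $\frac{k\gamma_1}{p_1 R}A$, which for large $k$ overwhelms $\mu_1 A$; the correct factorization (used by the paper) is $(1-\frac{k}{u_n})(A+\frac{1}{p_1}A_u u_n)+\frac{k}{u_n}A\ge\mu_1 A$. Once step (ii) fails, your steps (iii)--(iv) collapse, since testing against $u_n-u$ requires $u_n-u$ to be bounded in $X_1$.

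The paper's route is structurally different. It applies Lemma \ref{Ladyz} only to the weak \emph{limit} $u$: one first passes $n\to\infty$ via weak lower semicontinuity, replacing the $o_n(1)$ remainder by $\meas(\Omega_k^+)>0$ (legitimate because $u$ is assumed unbounded for contradiction), and only then applies the lemma to obtain $(u,v)\in L$. No uniform $L^\infty$-bound on the sequence is ever claimed. The paper then introduces truncations $\T_k(u_n,v_n)$, which \emph{are} uniformly bounded in $L^\infty$ by $k$, and proves in a separate step that this truncated sequence is again a $(CPS)_\beta$-sequence. Strong $W$-convergence of $T_k u_n\to u$ is obtained via the Arcoya--Boccardo exponential test function $\psi(t)=te^{\eta t^2}$ (your $(S_+)$/Egorov sketch would not directly dispose of the term $\frac{1}{p_1}\int A_u(x,u_n)(u_n-u)|\nabla u_n|^{p_1}dx$, which has the wrong sign and requires exactly this device). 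Finally, Proposition \ref{smooth1} is invoked on the \emph{truncated} sequence, not on $((u_n,v_n))_n$ itself.
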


\begin{proof}
Let $\beta\in\R$ be fixed. We have to prove that if $((u_n, v_n))_n\subset X$ is a sequence such that
\begin{equation}  \label{3.15}
\J(u_n, v_n)\longrightarrow \beta \quad \mbox{ and }\quad 
\Vert d\J(u_n, v_n)\Vert_{X^{\prime}}(1 + \Vert (u_n, v_n)\Vert_{X})\longrightarrow 0 \quad \mbox{ if } n\to +\infty,
\end{equation}
then $(u, v)\in X$ exists such that 
\begin{enumerate}
\item[$(i)$] $(u_n,v_n)\longrightarrow (u, v)$ in $W$ (up to subsequences),
\item[$(ii)$] $\J(u, v) = \beta$, $\  d\J(u, v) = 0$.
\end{enumerate}
To this aim, our proof is divided in several steps; more precisely, we will prove that:
\begin{enumerate}
\item $((u_n, v_n))_n$ is bounded in $W$; hence, there exists $(u,v)\in W$ such that,
up to subsequences, we have that 
\begin{align}         \label{3.16}
&(u_n, v_n)\rightharpoonup (u, v) \mbox{ weakly in } W,\\        \label{3.17}
&(u_n, v_n)\longrightarrow (u, v) \mbox{ in } L^{r_1}(\Omega)\times L^{r_2}(\Omega) 
\quad \mbox{ for any } (r_1,r_2) \in [1, p_1^{\ast}[ \times [1, p_2^{\ast}[,\\ 
  \label{3.18}
&(u_n, v_n)\longrightarrow (u, v) \mbox{ a.e. in } \Omega;
\end{align}
\item $(u, v)\in L$;
\item for any $k>0$, defining $T_k:\R\rightarrow\R$ such that
\[
T_k t:= \begin{cases}
t &\hbox{ if } \vert t\vert\leq k\\
k \frac{t}{\vert t\vert} &\hbox{ if } \vert t\vert > k
\end{cases}
\]
and 
\[
\T_k:(t_1, t_2)\in\R^2\mapsto\T_k(t_1, t_2) = (T_k t_1, T_k t_2)\in\R^2,
\]
then, if $k\geq \max\lbrace\vert (u, v)\vert_{L}, R\rbrace + 1$ (with $R\geq 1$ as in our set of hypotheses), 
we have
\begin{align}      \label{step3.1}
\Vert d\J (\T_k (u_n, v_n))\Vert_{X^{\prime}}\longrightarrow 0
\end{align}
and
\begin{align}      \label{step3.2}
\J (\T_k(u_n, v_n))\longrightarrow \beta;
\end{align}
\item $\Vert(T_k u_n -u, T_k v_n -v)\Vert_{W}\longrightarrow 0$ and then $(i)$ holds;
\item $(ii)$ is satisfied.
\end{enumerate}
For simplicity, here and in the following we use the notation $(\varepsilon_n)_n$ for any infinitesimal sequence 
depending only on $((u_n, v_n))_n$, 
while $(\varepsilon_{k, n})_n$ for any infinitesimal sequence depending also on some fixed integer $k$.
Moreover, we assume that $1 < p_i < N$ for both $i \in \{1,2\}$,
otherwise the proof is simpler.\\
{\sl Step 1.} 
Firstly, we note that \eqref{dJzstar} and \eqref{3.15} imply that
\[
\frac{\partial \J}{\partial u}(u_n,v_n)[u_n] = \eps_n, \quad
\frac{\partial \J}{\partial v}(u_n,v_n)[v_n] = \eps_n.
\]
Then, taking $\theta_1$, $\theta_2$ as in $(h_4)$ and $(g_2)$, 
and fixing any $n \in \N$, from
\eqref{functional}, \eqref{dJw}, \eqref{dJz}, \eqref{3.15} and conditions $(h_2)$, $(h_4)$,
it follows that
\[
\begin{split}
\beta + \varepsilon_n\ =\ & \J(u_n, v_n) - \theta_1 \frac{\partial \J}{\partial u}(u_n,v_n)[u_n] 
- \theta_2 \frac{\partial \J}{\partial v}(u_n,v_n)[v_n]\\
=\ & \frac{1}{p_1}\int_{\Omega}[(1 - \theta_1 p_1) A(x,u_n) - \theta_1 A_u(x,u_n) u_n] \vert\nabla u_n\vert^{p_1} dx\\
&\quad + \frac{1}{p_2}\int_{\Omega}[(1 - \theta_2 p_2) B(x,v_n) - \theta_2 B_v(x,v_n) v_n] \vert\nabla v_n\vert^{p_2} dx\\
&\quad +\int_{\Omega} (\theta_1 G_u(x,u_n,v_n)u_n + \theta_2 G_v(x,u_n,v_n)v_n - G(x,u_n,v_n)) dx\\
\ge\ & \frac{\mu_0 \mu_2}{p_1}\ \|u_n\|_{W_1}^{p_1} + \frac{\mu_0 \mu_2}{p_2}\ \|v_n\|_{W_2}^{p_2}
\ -\ C
\end{split}
\]
as \eqref{gucr}, \eqref{gvcr}, $(g_2)$ and \eqref{Gsigma} imply
\[
\int_{\Omega} (\theta_1 G_u(x,u_n,v_n)u_n + 
\theta_2 G_v(x,u_n,v_n)v_n - G(x,u_n,v_n)) dx\ \ge\ - C.
\]
Hence, $(u,v)\in W$ exist such that \eqref{3.16}--\eqref{3.18} hold, up to subsequences.\\
{\sl Step 2.} As we want to prove that $(u, v)\in L$, arguing by contradiction we assume that 
either $u\notin L^{\infty}(\Omega)$ or $v\notin L^{\infty}(\Omega)$.
If $u\notin L^{\infty}(\Omega)$, either
\begin{equation}    \label{sup_u}
\esssup_{\Omega} u = +\infty
\end{equation}
or
\begin{equation}     \label{sup_menou}
\esssup_{\Omega}(-u) = +\infty. 
\end{equation}
For example, suppose that \eqref{sup_u} holds.
Then, for any fixed $k\in\N$, $k > R$ (with $R\geq 1$ as in the hypotheses), we have
\begin{align}         \label{mpos}
\meas(\Omega_{k}^{+}) > 0,
\end{align}
with $\Omega_{k}^{+}:= \left\lbrace x\in\Omega \mid u(x) > k\right\rbrace$.
Now, we consider the new function $R^{+}_k: t\in\R \mapsto R^{+}_k t\in\R$ such that
\[
R^{+}_k t = \begin{cases}
0 &\hbox{ if } t\leq k\\
t - k &\hbox{ if } t > k
\end{cases}.
\]
From \eqref{3.16}, we have
\[
R^{+}_k u_n \rightharpoonup R^{+}_k u \quad \mbox{ in } W_1.
\]
Thus, by the sequentially weakly lower semicontinuity of $\Vert\cdot\Vert_{W_1}$, it follows that
\[
\int_{\Omega} \vert\nabla R^{+}_k u\vert^{p_1} dx 
\leq \liminf_{n\to +\infty} \int_{\Omega} \vert\nabla R^{+}_k u_n\vert^{p_1} dx,
\]
i.e.,
\begin{equation}        \label{3.23}
\int_{\Omega_{k}^{+}} \vert\nabla u\vert^{p_1} dx
 \leq \liminf_{n\to +\infty}\int_{\Omega_{n, k}^{+}} \vert\nabla u_n\vert^{p_1} dx,
\end{equation}
with $\Omega_{n, k}^{+}:=\left\lbrace x\in\Omega\mid u_n(x) > k\right\rbrace$.\\
On the other hand, by definition, we have
\[
\Vert R^{+}_k u_n\Vert_{X_1} \leq \Vert u_n\Vert_{X_1}.
\]
Hence, \eqref{dJzstar} and \eqref{3.15} imply that
\[
\frac{\partial\J}{\partial u}(u_n, v_n)[R^{+}_k u_n]\ \longrightarrow\ 0;
\]
thus, from \eqref{mpos} an integer $n_k\in\N$ exists such that
\begin{equation}      \label{m+mpos}
\frac{\partial\J}{\partial u}(u_n, v_n)\left[ R^{+}_k u_n\right]
 < \meas(\Omega^{+}_{k}) \quad \mbox{ for all } n\geq n_k.
\end{equation}
Taking any $k > R$ and $n \in \N$, from \eqref{dJw}, hypothesis 
$(h_3)$ with $\mu_1 <1$ (see Remark \ref{remmu1})
and condition $(h_2)$ it follows that 
\begin{align*}
\frac{\partial\J}{\partial u}(u_n, v_n)\left[R^{+}_k u_n\right] 
=\ &\int_{\Omega^{+}_{n,k}}\left( 1- \frac{k}{u_n}\right) \left(A(x, u_n) + 
\frac{1}{p_1}A_u(x, u_n)u_n\right) \vert\nabla u_n\vert^{p_1} dx\\
&+\int_{\Omega^{+}_{n,k}} \frac{k}{u_n} A(x, u_n) \vert\nabla u_n\vert^{p_1} dx 
- \int_{\Omega} G_u(x,u_n,v_n) R^{+}_k u_n dx\\
\geq\ & \mu_1 \int_{\Omega^{+}_{n,k}} A(x, u_n)\vert\nabla u_n\vert^{p_1} dx 
- \int_{\Omega} G_u(x,u_n,v_n) R^{+}_k u_n dx\\
\geq\ & \mu_0\mu_1\int_{\Omega^{+}_{n,k}} \vert\nabla u_n \vert^{p_1} dx 
- \int_{\Omega} G_u(x,u_n,v_n) R^{+}_k u_n dx,
\end{align*}
which, together with \eqref{m+mpos}, implies that
\begin{equation}    \label{mu1lambda}
\mu_0\mu_1 \int_{\Omega^{+}_{n,k}} \vert\nabla u_n\vert^{p_1} dx 
\leq \meas(\Omega^{+}_{k}) + \int_{\Omega} G_u(x,u_n,v_n) R^{+}_k u_n dx
\quad \mbox{ for all } n\geq n_k.
\end{equation}
We note that, from \eqref{3.18} and $(g_0)$, we have
\[
G_u(x, u_n, v_n) R^{+}_k u_n\longrightarrow G_u(x, u, v)R^{+}_k u \quad\mbox{ a.e. in } \Omega.
\]
Moreover, hypothesis \eqref{crit_expi} implies
that suitable exponents can be choosen so that 
\eqref{crits11} holds.
Thus, since $|R^{+}_k u_n(x)| \le |u_n(x)|$ for a.e. $x \in \Omega$,
by using \eqref{crits12} in \eqref{gucr}, from \eqref{crit_exp},
\eqref{crits11} and \eqref{3.17} 
we have that a function $h\in L^1(\Omega)$ exists such that
\[
\vert G_u(x,u_n,v_n) R^{+}_k u_n\vert
\leq \ C (\vert u_n\vert +\vert u_n\vert^{q_1} +\vert u_n\vert^{s_3}
+\vert v_n\vert^{s_4}) \leq h(x) 
\quad \hbox{for a.e. $x \in \Omega$,}
\]
up to subsequences (see, e.g., \cite[Theorem 4.9]{Br}). 
So, by using the Dominated Convergence Theorem, we have
\begin{equation}     \label{limGuRk}
\lim_{n\to +\infty} \int_{\Omega} G_u(x,u_n,v_n) R^{+}_k u_n dx 
= \int_{\Omega} G_u(x,u,v) R^{+}_k u dx.
\end{equation}
Hence, summing up, from \eqref{3.23}, \eqref{mu1lambda} and \eqref{limGuRk} we obtain
\[
\mu_0 \mu_1 \int_{\Omega^{+}_k}\vert\nabla u\vert^{p_1} dx 
\leq \meas(\Omega^{+}_k) +\int_{\Omega^{+}_k} G_u(x, u, v) R^{+}_k u dx,
\]
which implies, by using again \eqref{gucr}, that
\begin{equation}    \label{gradu1}
\int_{\Omega^{+}_k} \vert\nabla u\vert^{p_1} dx\leq 
C\left(\meas(\Omega^{+}_k) + \int_{\Omega^{+}_k} \vert u\vert dx + 
\int_{\Omega^{+}_k}\vert u\vert^{q_1} dx 
+ \int_{\Omega^{+}_k}\vert u\vert \vert v\vert^{s_1} dx\right).
\end{equation}
Since \eqref{crit_expi} holds, taking $s_3$, $s_4$ as in \eqref{crits11}, 
from \eqref{crits12}, H\"older inequality with conjugate exponents
$\frac{p^*_2}{s_4} > 1$ (see \eqref{crits11}) 
and $\frac{p^*_2}{p^*_2 - s_4}$, and direct computations
it follows that
\begin{equation}    \label{gradu2}
\begin{split}
\int_{\Omega^{+}_k} \vert u\vert \vert v\vert^{s_1} dx
&\leq \ \frac{1}{s_3} \int_{\Omega^{+}_k}\vert u\vert^{s_3} dx +\
\frac{s_3-1}{s_3}\ \int_{\Omega^{+}_k}\vert v\vert^{s_4} dx\\
&\leq \ \frac{1}{s_3} \int_{\Omega^{+}_k}\vert u\vert^{s_3} dx +\
\frac{s_3-1}{s_3} \ \vert v\vert_{p_2^*}^{s_4}\ [\meas(\Omega^{+}_k)]^{1-\frac{s_4}{p_2^*}}.
\end{split}
\end{equation}
Taking $\overline{q}_1$ as in \eqref{Ggrow}, in our setting of hypotheses
the estimates in \eqref{minmaxPerera1} hold, so 
\[
u \in W_1 \hookrightarrow L^{\overline{q}_1}(\Omega),
\]
and, since $k \ge 1$, we have that
\[
\int_{\Omega^{+}_k} \vert u\vert dx \le \int_{\Omega^{+}_k}\vert u\vert^{\overline{q}_1} dx,
\quad \int_{\Omega^{+}_k} \vert u\vert^{q_1} dx \le \int_{\Omega^{+}_k}\vert u\vert^{\overline{q}_1} dx,
\quad \int_{\Omega^{+}_k} \vert u\vert^{s_3} dx \le \int_{\Omega^{+}_k}\vert u\vert^{\overline{q}_1} dx.
\]
Thus, \eqref{gradu1} and \eqref{gradu2} imply that
\begin{equation}    \label{gradu3}
\int_{\Omega^{+}_k} \vert\nabla u\vert^{p_1} dx\leq 
C\left(\int_{\Omega^{+}_k}\vert u\vert^{\overline{q}_1} dx 
+ \meas(\Omega^{+}_k) + [\meas(\Omega^{+}_k)]^{1-\frac{s_4}{p_2^*}}\right)
\end{equation}
with $C = C(\|v\|_{W_2}) > 0$.
At last, from \eqref{minmaxPerera1} and direct computations we obtain that
\[
\begin{split}
\int_{\Omega^{+}_k} \vert u\vert^{\overline{q}_1} dx 
&\leq\ 2^{\overline{q}_1 -1}\int_{\Omega^{+}_k} \vert u - k\vert^{\overline{q}_1} dx 
+ 2^{\overline{q}_1 -1} k^{\overline{q}_1} \meas(\Omega^{+}_k)\\
&\leq\ 2^{\overline{q}_1 -1} |u|_{\overline{q}_1}^{\overline{q}_1 - p_1}
\left(\int_{\Omega^{+}_k} \vert u - k\vert^{\overline{q}_1} dx\right)^{\frac{p_1}{\overline{q}_1}} 
+ 2^{\overline{q}_1 -1} k^{\overline{q}_1} \meas(\Omega^{+}_k);
\end{split}
\]
while, as $p_1 < N$, from \eqref{crits11} it results
\[
\begin{split}
&\meas(\Omega^{+}_k) = \meas(\Omega^{+}_k)^{1-\frac{p_1}{N} + \eps_1}, \quad \eps_1 = \frac{p_1}{N} > 0,\\
&\meas(\Omega^{+}_k)^{1-\frac{s_4}{p_2^*}} = 
\meas(\Omega^{+}_k)^{1-\frac{p_1}{N} + \eps_2}, \quad \eps_2 = \frac{p_1}{N} - \frac{s_4}{p_2^*} > 0.
\end{split}
\]
Thus, summing up, from \eqref{gradu3} we have that
\[
\int_{\Omega^{+}_k} \vert\nabla u\vert^{p_1} dx
\leq 
C\left(\left(\int_{\Omega^{+}_k} \vert u - k\vert^{\overline{q}_1} dx\right)^{\frac{p_1}{\overline{q}_1}} 
+ \meas(\Omega^{+}_k)^{1-\frac{p_1}{N} + \eps_1} + \meas(\Omega^{+}_k)^{1-\frac{p_1}{N} + \eps_2}\right),
\]
with $C = C(\|u\|_{W_1},\|v\|_{W_2}) > 0$;
so, as $k \ge 1$ implies $1 \le k^{p_1}$, Lemma \ref{Ladyz} applies and yields
a contradiction to \eqref{sup_u}. \\
Now, suppose that \eqref{sup_menou} holds which implies that,
fixing any $k\in\N$, $k\geq R$, it is
\[
\meas(\Omega^{-}_k)>0 \quad \hbox{with}\; \Omega^{-}_k = \lbrace x\in\Omega: u(x) < -k\rbrace.
\]
In this case, by replacing function $R^{+}_k$ with $R^{-}_k:t\in\R\mapsto R^{-}_k t\in\R$ such that
\[
R^{-}_k t:=
\begin{cases}
0 &\hbox{ if } t\geq -k\\
t+k &\hbox{ if } t<-k
\end{cases},
\]
we can reason as above so to apply again Lemma \ref{Ladyz} which yields a contradiction 
to \eqref{sup_menou}. Then, it has to be $u\in L^{\infty}(\Omega)$.\\
Similar arguments but considering $\frac{\partial \J}{\partial v}(u_n,v_n)$ and 
$R^{+}_k v_n$, respectively $R^{-}_k v_n$, and the related
sets, allow us to prove that it has to be also $v\in L^{\infty}(\Omega)$. \\
{\sl Step 3.}
Fixing $k$ as required in this step, define the functions 
\[
R_k: t \in\R\ \mapsto\ R_k t= t - T_k t = \begin{cases}
0 &\hbox{ if } |t|\leq k\\
t - k\frac{t}{|t|} &\hbox{ if } |t|>k
\end{cases} \in \R,
\]
\[
\er_k:(t_1, t_2)\in\R^2\mapsto\er_k(t_1, t_2) = (R_k t_1, R_k t_2)\in\R^2,
\]
and the sets
\[
\Omega^{u}_{n, k}:=\lbrace x\in\Omega: \vert u_n(x)\vert > k\rbrace, 
\quad \Omega^{v}_{n, k}:=\lbrace x\in\Omega: \vert v_n(x)\vert >k\rbrace\qquad
\hbox{for any $n \in \N$.}
\]
By definition, we have that
\begin{equation}    \label{cappa}
\Vert \T_k(u_n,v_n)\Vert_{X}\leq \Vert(u_n, v_n)\Vert_{X} 
\quad \mbox{ and }\quad 
\Vert \er_k(u_n,v_n)\Vert_{X}\leq\Vert (u_n, v_n)\Vert_{X}
\qquad \hbox{for all $n \in \N$;}
\end{equation}
furthermore, we note that
\[
\T_k(u,v) = (u, v) \quad\hbox{and}\quad
\er_k(u,v) = (0,0) \qquad \hbox{for a.e. $x \in \Omega$.}
\]
Then, from \eqref{3.16}--\eqref{3.18} it follows that
\begin{align}   \nonumber
&\T_k(u_n,v_n)\rightharpoonup (u,v) \ \mbox{ weakly in } W,\\ 
\nonumber
&\T_k(u_n,v_n)\longrightarrow (u, v) \;
\mbox{ in } L^{r_1}(\Omega)\times L^{r_2}(\Omega) 
\quad \mbox{ for any } (r_1,r_2) \in [1, p_1^{\ast}[ \times [1, p_2^{\ast}[,\\  
\label{Tkae}
&\T_k(u_n,v_n) \longrightarrow (u, v) \ \mbox{ a.e. in } \Omega,
\end{align}
and also, again from \eqref{3.17} and \eqref{3.18}, we have that
\begin{equation}    \label{Rkstrongly}
\er_k(u_n,v_n)\longrightarrow (0, 0) \;
\mbox{ in } L^{r_1}(\Omega)\times L^{r_2}(\Omega) 
\quad \mbox{ for any } (r_1,r_2) \in [1, p_1^{\ast}[ \times [1, p_2^{\ast}[,
\end{equation}
\[
\meas(\Omega^{u}_{n, k})\longrightarrow 0
\quad\hbox{and}\quad \meas(\Omega^{v}_{n, k})\longrightarrow 0 \quad \mbox{ as } n\to +\infty.
\]
Furthermore, \eqref{dJzstar}, \eqref{3.15} and \eqref{cappa} imply that
\begin{equation}    \label{normRk0}
\left\Vert\frac{\partial\J}{\partial u}(u_n, v_n)\right\Vert_{X^{\prime}_1}\Vert R_k u_n\Vert_{X_1}\longrightarrow 0 
\quad \mbox{ and} \quad 
\left\Vert\frac{\partial\J}{\partial v}(u_n, v_n)\right\Vert_{X^{\prime}_2}\Vert R_k v_n\Vert_{X_2}\longrightarrow 0.
\end{equation}
Now, by reasonig as in the proof of \textsl{Step 2}
but replacing $R_k^{+} u_n$ and $R^{+}_k v_n$ with $R_k u_n$, respectively $R_k v_n$,
from \eqref{normRk0} we have that
\begin{equation} 
 \label{come2u}
\begin{split}
\eps_n &=\ \frac{\partial\J}{\partial u}(u_n, v_n) [R_{k} u_n] + \int_{\Omega}G_u(x, u_n, v_n) R_k u_n dx\\
&\geq \mu_1\int_{\Omega^{u}_{n, k}}A(x, u_n)\vert\nabla u_n\vert^{p_1} dx
\geq \mu_0 \mu_1\int_{\Omega^{u}_{n, k}}\vert\nabla u_n\vert^{p_1}dx, 
\end{split}
\end{equation}
\begin{equation}
\label{come2v}
\begin{split}
\eps_n &=\ \frac{\partial\J}{\partial v}(u_n, v_n)[R_{k} v_n] + \int_{\Omega}G_v(x, u_n, v_n) R_k v_n dx\\
&\geq\mu_1\int_{\Omega^{v}_{n, k}}B(x, v_n)\vert\nabla v_n\vert^{p_2} dx 
\geq \mu_0 \mu_1\int_{\Omega^{v}_{n, k}}\vert\nabla v_n\vert^{p_2}dx,
\end{split}
\end{equation}
as the same arguments used for proving \eqref{limGuRk} apply so that 
from \eqref{Rkstrongly} we obtain
\[
\int_{\Omega}G_u(x, u_n, v_n)R_k u_n dx \longrightarrow 0 \quad \mbox{ and }
\quad\int_{\Omega}G_v(x, u_n, v_n)R_k v_n dx \longrightarrow 0.
\]
Whence, \eqref{come2u} and \eqref{come2v} imply not only that
\[
\int_{\Omega^{u}_{n, k}} \vert\nabla u_n\vert^{p_1} dx\longrightarrow 0 \quad \mbox{ and } \quad
\int_{\Omega^{v}_{n, k}} \vert\nabla v_n\vert^{p_2} dx\longrightarrow 0,
\]
i.e.
\begin{equation}     \label{normRk}
\Vert R_k u_n\Vert_{W_1}\longrightarrow 0 \quad \mbox{ and } \quad
 \Vert R_k v_n\Vert_{W_2}\longrightarrow 0,
\end{equation}
but also
\begin{equation}    \label{Agradto0}
\int_{\Omega^{u}_{n, k}}A(x, u_n) \vert\nabla u_n\vert^{p_1} dx\longrightarrow 0
\quad \mbox{ and } \quad 
\int_{\Omega^{v}_{n, k}}B(x, v_n) \vert\nabla v_n\vert^{p_2} dx\longrightarrow 0.
\end{equation}
Now, in order to prove \eqref{step3.1}, from \eqref{dJbis} it is enough to verify that
\begin{equation}    \label{step33}
\left\|\frac{\partial\J}{\partial u}(\T_k(u_n,v_n))\right\|_{X'_1} \longrightarrow 0
\quad \mbox{ and } \quad 
\left\|\frac{\partial\J}{\partial v}(\T_k(u_n,v_n))\right\|_{X'_2} \longrightarrow 0.
\end{equation}
To this aim, let $w\in X_1$, $z\in X_2$ be such that 
$\Vert w\Vert_{X_1} = 1$, $\Vert z\Vert_{X_2} = 1$.
Then, direct computations imply that
\begin{equation}   \label{sumup}
\begin{split}
\frac{\partial\J}{\partial u}(\T_k(u_n,v_n))[w] 
=\ & \frac{\partial\J}{\partial u}(u_n, v_n)[w] 
-\int_{\Omega_{n, k}^{u}} A(x, u_n)\vert\nabla u_n\vert^{p_1-2}\nabla u_n\cdot\nabla w dx \\
&-\frac{1}{p_1}\int_{\Omega_{n, k}^{u}} A_u(x, u_n) w \vert\nabla u_n\vert^{p_1} dx\\
&+ \int_{\Omega}(G_u(x, u_n, v_n) - G_u(x, T_k u_n, T_k v_n)) w dx
\end{split}
\end{equation}
and 
\begin{equation}   \label{sumup2}
\begin{split}
\frac{\partial\J}{\partial v}(\T_k(u_n,v_n))[z] 
=\ & \frac{\partial\J}{\partial v}(u_n, v_n)[z] 
-\int_{\Omega_{n, k}^{v}} B(x,v_n)\vert\nabla v_n\vert^{p_2-2}\nabla v_n\cdot\nabla z dx \\
&-\frac{1}{p_2}\int_{\Omega_{n,k}^{v}} B_v(x,v_n) z \vert\nabla v_n\vert^{p_2} dx\\
&+ \int_{\Omega}(G_v(x, u_n, v_n) - G_v(x, T_k u_n, T_k v_n)) z dx.
\end{split}
\end{equation}
From \eqref{dJzstar} and \eqref{3.15} we have that
\[
\left|\frac{\partial\J}{\partial u}(u_n, v_n)[w]\right| \le 
\left\|\frac{\partial\J}{\partial u}(u_n, v_n)\right\|_{X_1'} \longrightarrow 0,
\quad
\left|\frac{\partial\J}{\partial v}(u_n, v_n)[z]\right| \le 
\left\|\frac{\partial\J}{\partial v}(u_n, v_n)\right\|_{X_2'} \longrightarrow 0.
\]
Moreover, \eqref{Auremark} and \eqref{Agradto0} imply that
\[
\begin{split}
&\left\vert\int_{\Omega^{u}_{n,k}} A_u(x,u_n) w\vert\nabla u_n\vert^{p_1}dx\right\vert
\leq \frac{\gamma_1}{R}\int_{\Omega^{u}_{n,k}} A(x,u_n)\vert\nabla u_n\vert^{p_1} dx\longrightarrow 0,\\
&\left\vert\int_{\Omega^{v}_{n,k}} B_v(x,v_n) z \vert\nabla v_n\vert^{p_2}dx\right\vert
\leq \frac{\gamma_2}{R}\int_{\Omega^{v}_{n,k}} B(x,v_n)\vert\nabla v_n\vert^{p_2} dx\longrightarrow 0.
\end{split}
\]
On the other hand, it results
\[
\begin{split}
\left|\int_{\Omega}(G_u(x, u_n, v_n) - G_u(x, T_k u_n, T_k v_n)) w dx\right|
\ \le\ &\int_{\Omega}|G_u(x, u_n, v_n) - G_u(x,u,v)| dx \\
& + \int_{\Omega}|G_u(x,T_k u_n,T_k v_n)  - G_u(x,u,v)| dx,\\
\left|\int_{\Omega}(G_v(x,u_n,v_n) - G_v(x,T_k u_n,T_k v_n)) z dx\right|
\ \le\ & \int_{\Omega}|G_v(x,u_n,v_n) - G_v(x,u,v)| dx\\
&+ \int_{\Omega}|G_v(x,T_k u_n,T_k v_n) - G_v(x,u,v)| dx.
\end{split}
\]
We note that from $(g_0)$ and \eqref{3.18}, respectively \eqref{Tkae}, we have
\[
\begin{split}
&G_u(x, u_n, v_n) \longrightarrow G_u(x,u,v) \quad \hbox{and}\quad 
G_v(x, u_n, v_n) \longrightarrow G_v(x,u,v) \qquad \hbox{a.e. in $\Omega$,}\\
&G_u(x,T_k u_n,T_k v_n) \longrightarrow G_u(x,u,v) \quad \hbox{and}\quad 
G_v(x,T_k u_n,T_k v_n) \longrightarrow G_v(x,u,v)
\qquad \hbox{a.e. in $\Omega$,}
\end{split}
\]
while from $(g_1)$ and Young inequality, direct computations imply that
\[
|G_u(x, u_n, v_n)| \le C(1+ |u_n|^{q_1} + |v_n|^{s_4}) \quad \hbox{and}\quad
|G_v(x, u_n, v_n)| \le C(1+ |u_n|^{s_6} + |v_n|^{q_2})
\qquad \hbox{a.e. in $\Omega$,}
\]
with $s_4$ as in \eqref{crits11} and $s_6$ as in \eqref{crits21},
and 
\[
|G_u(x,T_ku_n,T_k v_n)| \le C \quad \hbox{and}\quad
|G_v(x,T_ku_n,T_kv_n)| \le C
\qquad \hbox{a.e. in $\Omega$;}
\]
thus, \eqref{crits11}, \eqref{crits21}, \eqref{3.17} and \cite[Theorem 4.9]{Br} 
allow us to apply the Dominated Convergence Theorem 
so that we obtain
\[
\begin{split}
&\int_{\Omega}|G_u(x, u_n, v_n) - G_u(x,u,v)| dx \longrightarrow 0, \quad
\int_{\Omega}|G_u(x,T_k u_n,T_k v_n)  - G_u(x,u,v)| dx \longrightarrow 0,\\
& \int_{\Omega}|G_v(x,u_n,v_n) - G_v(x,u,v)| dx \longrightarrow 0, \quad
\int_{\Omega}|G_v(x,T_k u_n,T_k v_n) - G_v(x,u,v)| dx \longrightarrow 0.
\end{split}
\]
So, summing up, from \eqref{sumup} and \eqref{sumup2}, 
all the previous limits imply that
\begin{equation}   	\label{epskn}
\begin{split}
\left\vert\frac{\partial\J}{\partial u}(\T_k(u_n,v_n))[w]\right\vert
&\leq \varepsilon_{k,n} +\left\vert\int_{\Omega^{u}_{n, k}} A(x, u_n) 
\vert\nabla u_n\vert^{p_1-2}\nabla u_n\cdot\nabla w dx\right\vert,\\
\left\vert\frac{\partial\J}{\partial v}(\T_k(u_n,v_n))[z]\right\vert
&\leq \varepsilon_{k,n} +\left\vert\int_{\Omega^{v}_{n,k}} B(x,v_n) 
\vert\nabla v_n\vert^{p_2-2}\nabla v_n\cdot\nabla z dx\right\vert,
\end{split}
\end{equation}
where both $(\eps_{k,n})_n$ represent suitable infinitesimal sequences
independent of $w$, respectively $z$.\\
Using the same notation introduced in \textsl{Step 2}, we evaluate 
the last integrals in \eqref{epskn} by reasoning as in the proof 
of \textsl{Step 3} in \cite[Proposition 4.6]{CP2} but
taking new test functions and passing to the limit in 
\[
\begin{split}
&\frac{\partial\J}{\partial u}(\T_k(u_n,v_n))[w R^{+}_{k}u_n],\quad
\frac{\partial\J}{\partial u}(\T_k(u_n,v_n))[w R^{+}_{k-1}u_n], \\
&\frac{\partial\J}{\partial u}(\T_k(u_n,v_n))[w R^{-}_{k}u_n],\quad
\frac{\partial\J}{\partial u}(\T_k(u_n,v_n))[w R^{-}_{k-1}u_n],
\end{split}
\]
respectively
\[
\begin{split}
& \frac{\partial\J}{\partial v}(\T_k(u_n,v_n))[z R^{+}_{k}v_n],\quad
\frac{\partial\J}{\partial v}(\T_k(u_n,v_n))[z R^{+}_{k-1}v_n],\\
& \frac{\partial\J}{\partial v}(\T_k(u_n,v_n))[z R^{-}_{k}v_n],\quad
\frac{\partial\J}{\partial v}(\T_k(u_n,v_n))[z R^{-}_{k-1}v_n].
\end{split}
\]
Hence, we are able to claim that \eqref{step33} hold.\\
At last, direct computations imply that
\[
\begin{split}
\J(\T_k(u_n,v_n)) 
=\ & \J(u_n, v_n) -\frac{1}{p_1}\int_{\Omega_{n, k}^{u}} A(x, u_n) \vert\nabla u_n\vert^{p_1} dx
-\frac{1}{p_2}\int_{\Omega_{n, k}^{v}} B(x,v_n) \vert\nabla v_n\vert^{p_2} dx\\  
&+ \int_{\Omega}(G(x, u_n, v_n) - G(x, T_k u_n, T_k v_n)) dx,
\end{split}
\]
where from \eqref{Gsigmamax}, \eqref{minmaxPerera1}, \eqref{3.17}, \eqref{3.18},
\eqref{Tkae}, and again the Dominated Convergence Theorem
we have 
\[
\int_{\Omega}(G(x, u_n, v_n) - G(x, T_k u_n, T_k v_n)) dx \longrightarrow 0.
\]
Thus, \eqref{step3.2} follows from \eqref{3.15} and \eqref{Agradto0}.\\
{\sl Step 4.} 
By following some ideas introduced in \cite{AB1} and considering 
the real map $\psi(t) = t {\rm e}^{\eta t^2}$, where $\eta$ can be fixed in a suitable way, 
in particular by applying the same arguments developed in the proof of \cite[Proposition 3.6]{CPP2015}
in order to estimate $\frac{\partial\J}{\partial u}(\T_k(u_n,v_n))[\psi(T_k u_n - u)]$,
 we prove that
\begin{equation}    \label{Tkun_u}
\Vert T_k u_n - u\Vert_{W_1}\rightarrow 0.
\end{equation}
Moreover, reasonig in the same way but considering 
$\frac{\partial\J}{\partial v}(\T_k(u_n,v_n))[\psi(T_k v_n - v)]$,
we have also 
\begin{equation}    \label{Tkvn_v}
\Vert T_k v_n - v\Vert_{W_2}\rightarrow 0.
\end{equation}
Then, condition $(i)$ follows from \eqref{normRk}, \eqref{Tkun_u} and \eqref{Tkvn_v}.\\
{\sl Step 5.} 
By means of Proposition \ref{smooth1} applied to the uniformly bounded sequence $(\T_k(u_n,v_n))_n$, 
from \eqref{Tkae}, \eqref{Tkun_u} and \eqref{Tkvn_v} it follows that
\[
\J(\T_k(u_n, v_n))\longrightarrow \J(u,v)\quad \hbox{and}\quad
\Vert d\J(\T_k(u_n, v_n)) - d\J(u, v)\Vert_{X^{\prime}}\longrightarrow 0.
\]
Hence, $(ii)$ follows from \eqref{step3.1} and \eqref{step3.2}.
\end{proof}


\section{Main results}    \label{sec_main}

In order to introduce a suitable decomposition of the space $X$, 
we recall that,
if $p > 1$ but $p \ne 2$, the spectral properties of the operator $-\Delta_p$ 
in $W^{1,p}_0(\Omega)$ are still mostly unknown. In particular,
with respect to the semilinear case, the drawback in using its known eigenvalues  
is that, for the Banach space $W^{1,p}_0(\Omega)$, their use does not provide a decomposition 
having properties similar to that of the Hilbert space $H^1_0(\Omega)$ 
by means of the eigenfunctions of $-\Delta$ on $\Omega$ with null homogeneous Dirichlet data.  

Here, we consider the sequence of pseudo--eigenvalues of the operator $-\Delta_p$ 
in $W^{1,p}_0(\Omega)$ as introduced in \cite[Section 5]{CP2}
 as a suitable decomposition of the Sobolev space $W^{1,p}_0(\Omega)$ occurs, 
so that it turns out to be the classical one for $p=2$. 

In order to present this definition, firstly let us recall that if $V\subseteq X$ is 
a closed subspace of a Banach space $X$, a subspace
$Y\subseteq X$ is a {\sl topological complement} of $V$, briefly $X=V\oplus Y$,
 if $Y$ is closed and every $x\in X$ can be uniquely written as $w+y$, with $w\in V$ and $y\in Y$; 
furthermore, the projection operators onto $V$ and $Y$ are (linear and) continuous
(see, e.g., \cite[p. 38]{Br}). 
When  $X=V\oplus Y$ and $V$ has finite dimension, we say that $Y$ has finite codimension, 
with $\codim Y=\dim V$. 

Now, as in \cite[Section 5]{CP2},
for $i\in\lbrace 1, 2\rbrace$, we start from $\lambda_{i,1}$,
first eigenvalue of $-\Delta_{p_i}$ in $W_i$, 
which is characterized as
\begin{equation}\label{primo}
\lambda_{i,1}=\displaystyle{\inf_{\xi\in W_i\setminus\{0\}}
\frac{\int_\Omega |\nabla \xi|^{p_i} dx}{\int_\Omega |\xi|^{p_i} dx}},
\end{equation}
and is strictly positive, simple, isolated and has a unique eigenfunction $\varphi_{i,1}$
such that
\begin{equation}\label{eig}
\varphi_{i,1} > 0 \;\hbox{a.e. in $\Omega$,}\quad \varphi_{i,1} \in L^\infty(\Omega)
\quad\hbox{and}\quad |\varphi_{i,1}|_{p_i}=1
\end{equation}
(see, e.g., \cite{Lin}).
Then, we have the existence of a sequence $(\lambda_{i,m})_m$ such that
\begin{equation} \label{rn0}
0 < \lambda_{i,1} < \lambda_{i,2} \le \dots \le \lambda_{i,m} \le \dots\qquad
\hbox{and}\qquad
\lambda_{i,m}\nearrow +\infty \quad \hbox{if $m\to+\infty$,}
\end{equation}
with corresponding functions $(\psi_{i,m})_m$ such that $\psi_{i,1}\equiv \varphi_{i,1}$ 
and $\psi_{i,m}\ne \psi_{i,j}$ if $m\ne j$. 
They generate the whole space $W_i$, are also in $L^\infty(\Omega)$,
hence in $X_i$, and are such that
\[
W_i = V_{i,m}\oplus Y_{i,m} \quad \hbox{ for all } m\in \N, 
\]
where $V_{i,m} = {\rm span}\{\psi_{i,1},\ldots,\psi_{i,m}\}$ 
and its complement $Y_{i,m}$ in $W_i$ can be explicitely described.
Moreover, for all $m\in \N$ 
on the infinite dimensional subspace $Y_{i,m}$ the following inequality holds:
\begin{equation}\label{lambdan+1}
\lambda_{i,m+1}\ \int_\Omega|w|^{p_i} dx\ \leq\ \int_\Omega |\nabla w|^{p_i} dx \quad
\hbox{ for all }  w\in Y_{i,m}
\end{equation}
(cf. \cite[Proposition 5.4]{CP2}).

Thus, for all $m\in \N$, in our setting \eqref{Wdefn1} we have
\[
W = (V_{1,m}\times V_{2,m})\oplus (Y_{1,m}\times Y_{2,m})
\]
and then, since $V_{i,m}$ is also a finite dimensional subspace of $X_i$, 
\[
X = (V_{1,m}\times V_{2,m})\oplus(Y_m^{X_1}\times Y_m^{X_2}),
\]
with
\[
Y_m^{X_i} = Y_{i,m}\cap L^{\infty}(\Omega)\subset X_i\quad \hbox{for $i \in\lbrace 1, 2\rbrace$.}
\] 
We note that it results $X_i = V_{i,m}\oplus Y_m^{X_i}$ with
\[
{\rm dim} V_{i,m} = m \quad \mbox{ and } \quad {\rm codim} Y_m^{X_i} = m.
\] 

Now, we can state our main results.

\begin{theorem}    \label{ThExist}
Taking $p_1$, $p_2 > 1$, let $A(x, u)$ and $B(x, v)$ be two given real functions defined in $\Omega\times \R$
such that conditions $(h_0)$--$(h_4)$ hold. 
Moreover, suppose that the real function $G(x,u,v)$,
defined in $\Omega\times \R^2$, satisfies hypotheses $(g_0)$--$(g_2)$. 
In addition, we assume that
\begin{enumerate}
\item [$(g_3)$] $\;\quad\displaystyle\limsup_{(u, v)\to 0} \frac{G(x, u, v)}{\vert u\vert^{p_1} 
+ \vert v\vert^{p_2}}\ <\  
\mu_0\min\left\lbrace \frac{\lambda_{1,1}}{p_1}, \frac{\lambda_{2,1}}{p_2}\right\rbrace \quad$ 
uniformly a.e. in $\Omega$,
\end{enumerate}
with $\lambda_{i,1}$, $i \in \{1,2\}$, as in \eqref{primo}.
Then, functional $\J$ in \eqref{functional} possesses at least one nontrivial critical point
in $X$; hence, problem \eqref{euler} admits a nontrivial weak bounded solution.
\end{theorem}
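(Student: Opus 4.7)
The plan is to apply the Mountain Pass Theorem (Theorem \ref{mountainpass}) to the functional $\J$ in \eqref{functional}. Since $G(\cdot,0,0)$ is constant in $(u,v)$ and does not affect $d\J$ nor the critical points of $\J$, we may (after subtracting this constant from $G$) assume $G(x,0,0)\equiv 0$, so that $\J(0,0)=0$. The $(wCPS)$ condition in $\R$, and hence in $\R_+$, is directly supplied by Proposition \ref{PropwCPS}. What remains is to verify the two geometric conditions $(i)$--$(ii)$ of Theorem \ref{mountainpass}.

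For condition $(i)$, by $(g_3)$ one can fix $\alpha<\mu_0\min\{\lambda_{1,1}/p_1,\,\lambda_{2,1}/p_2\}$ and $\delta>0$ such that $G(x,u,v)\le \alpha(|u|^{p_1}+|v|^{p_2})$ for a.e.\ $x\in\Omega$ and $|(u,v)|\le\delta$. Combining this with the global subcritical bound \eqref{Gsigma} (where the mixed terms are separated by Young's inequality as in Remark \ref{rmkcrit}), we obtain
\begin{equation*}
G(x,u,v)\ \le\ \alpha(|u|^{p_1}+|v|^{p_2})+C\left(|u|^{\overline{q}_1}+|v|^{\overline{q}_2}\right)
\quad\text{for all }(u,v)\in\R^2,
\end{equation*}
with $\overline{q}_i\in(p_i,p_i^{\ast})$ as in \eqref{Ggrow}--\eqref{minmaxPerera1}. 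Using $(h_2)$, the Rayleigh characterization \eqref{primo} and the Sobolev embedding \eqref{Sobpi}, we estimate
\begin{equation*}
\J(u,v)\ \ge\ \left(\frac{\mu_0}{p_1}-\frac{\alpha}{\lambda_{1,1}}\right)\|u\|_{W_1}^{p_1}
+\left(\frac{\mu_0}{p_2}-\frac{\alpha}{\lambda_{2,1}}\right)\|v\|_{W_2}^{p_2}
-C'\left(\|u\|_{W_1}^{\overline{q}_1}+\|v\|_{W_2}^{\overline{q}_2}\right).
\end{equation*}
The choice of $\alpha$ makes the two $p_i$-coefficients strictly positive, while $\overline{q}_i>p_i$; hence for $\|(u,v)\|_{W}=R_0$ small enough, the $p_i$-terms dominate and $\J(u,v)\ge\varrho_0$ for some $\varrho_0>0$.

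For condition $(ii)$, I would test along the half-line $t\mapsto(t\varphi_{1,1},0)\in X$, where $\varphi_{1,1}\in X_1$ (by \eqref{eig}) is the positive normalized first eigenfunction of $-\Delta_{p_1}$. The bound \eqref{Atheta} gives, using $p_1+(1-p_1\theta_1-\mu_2)/\theta_1=(1-\mu_2)/\theta_1$,
\begin{equation*}
\frac{1}{p_1}\int_{\Omega} A(x,t\varphi_{1,1})|\nabla(t\varphi_{1,1})|^{p_1}\,dx
\ \le\ C_1\,t^{p_1}+C_2\,t^{(1-\mu_2)/\theta_1}.
\end{equation*}
On the other hand, since $\varphi_{1,1}>0$ a.e., the set $\{|t\varphi_{1,1}|\ge R\}$ exhausts $\Omega$ as $t\to+\infty$, so by \eqref{rmkPerera},
\begin{equation*}
\int_{\Omega}G(x,t\varphi_{1,1},0)\,dx\ \ge\ c_0\,t^{1/\theta_1}
\end{equation*}
for some $c_0>0$ and $t$ large. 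Since $1/\theta_1>p_1$ by $(h_4)$ and \eqref{minmaxPerera}, and $1/\theta_1>(1-\mu_2)/\theta_1$, the $G$-term overtakes the kinetic one and $\J(t\varphi_{1,1},0)\to -\infty$; picking $t_0>0$ large enough, $e:=(t_0\varphi_{1,1},0)$ satisfies $\|e\|_{W}>R_0$ and $\J(e)<\varrho_0$.

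The main technical obstacle lies in condition $(ii)$: the coefficient $A(x,u)$ allows the kinetic part to grow with an exponent higher than $p_1$, so the standard $p_1$-Laplacian argument does not apply. It is exactly the interplay between $(h_4)$ (which caps the growth of $A$ in $u$ at $(1-p_1\theta_1-\mu_2)/\theta_1$) and the Ambrosetti--Rabinowitz condition $(g_2)$ (which via \eqref{rmkPerera} forces $G(\cdot,u,0)$ to grow at rate $1/\theta_1$) that yields the strict inequality $(1-\mu_2)/\theta_1<1/\theta_1$ needed to drive $\J$ to $-\infty$. Once both geometric conditions are verified, Theorem \ref{mountainpass} delivers a critical point $(u,v)\in X$ with $\J(u,v)\ge\varrho_0>0=\J(0,0)$, which is therefore nontrivial; by Proposition \ref{smooth1}, $(u,v)$ is a weak bounded solution of \eqref{euler}.
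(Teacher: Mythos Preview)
Your proposal is correct and follows essentially the same approach as the paper: both verify the Mountain Pass geometry by combining $(g_3)$ with the subcritical bound \eqref{Gsigmamax} for condition $(i)$, and \eqref{Atheta} together with \eqref{rmkPerera} along a first-eigenfunction direction for condition $(ii)$, then invoke Proposition \ref{PropwCPS} and Theorem \ref{mountainpass}. The only cosmetic difference is that the paper tests condition $(ii)$ along $\bar u = R\,\varphi_{1,1}/|\varphi_{1,1}|$ rather than $\varphi_{1,1}$ itself, but the key estimates and overall strategy are identical.
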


\begin{theorem}     \label{ThMolt}
Taking $p_1$, $p_2 > 1$, suppose that $A(x, u)$, $B(x, v)$ and $G(x,u,v)$ satisfy
hypotheses $(h_0)$--$(h_4)$, $(g_0)$--$(g_2)$. Moreover, assume also that
\begin{itemize}
\item[$(h_5)$]  $\; A(x, \cdot), B(x, \cdot)$ are even in $\R$ for a.e. $x\in\Omega$;
\item[$(g_4)$] $\;\quad\displaystyle\liminf_{|(u, v)|\to +\infty} 
\frac{G(x, u, v)}{\vert u\vert^{\frac{1}{\theta_1}} + \vert v\vert^{\frac{1}{\theta_2}}}\ >\ 0\ $
uniformly a.e. in $\Omega$;
\item[$(g_5)$] $\; G(x,\cdot, \cdot)$ is even in $\R^2$ for a.e. $x\in\Omega$.
\end{itemize}
Then, the even functional $\J$ in \eqref{functional} possesses a sequence of critical 
points $((u_m,v_m))_m \subset X$ such that $\J(u_m,v_m)\nearrow +\infty$; hence,
problem \eqref{euler} admits infinitely many distinct weak bounded solutions.
\end{theorem}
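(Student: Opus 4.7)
The plan is to apply Corollary~\ref{multiple} to the (suitably shifted) functional $\J$. The evenness of $\J$ follows immediately from $(h_5)$ and $(g_5)$, the $C^1$ regularity on $X$ comes from Proposition~\ref{smooth1}, and Proposition~\ref{PropwCPS} gives the $(wCPS)$ condition on all of $\R$, in particular on $\R_+$. The requirement $J(0)=0$ can be arranged by subtracting the constant $\J(0,0)$, which does not alter critical points. It remains to produce, for each $m \in \N$, closed subspaces $V_m$, $Y_m$ of $X$ and a symmetric, $\Vert\cdot\Vert_W$-bounded neighborhood $\enne_m$ of the origin verifying $({\cal H}_{\varrho_m})$ with $\varrho_m \nearrow +\infty$.

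Exploiting the decomposition recalled at the beginning of Section~\ref{sec_main}, I would take
\[
V_m \ =\ V_{1,m+1}\times V_{2,m+1}, \qquad Y_m \ =\ Y_m^{X_1}\times Y_m^{X_2},
\]
so that $V_m + Y_m = X$, $\dim V_m = 2(m+1)$ and $\codim Y_m = 2m$, which gives $(i)$. For condition $(iv)$, on the finite dimensional space $V_m$ all norms are equivalent; combining the upper bounds \eqref{Atheta}--\eqref{Btheta} on $A$ and $B$ with the lower estimate
\[
G(x,u,v)\ \geq\ c_0\bigl(|u|^{1/\theta_1}+|v|^{1/\theta_2}\bigr)-C_0\quad\hbox{a.e. in $\Omega$,}
\]
which follows from $(g_4)$ and $(g_0)$, and noting that $\mu_2>0$ makes the exponents $(1-p_i\theta_i-\mu_2)/\theta_i$ strictly smaller than $1/\theta_i$, one sees that the negative contribution of $G$ eventually dominates the positive kinetic one as $\Vert(u,v)\Vert_X\to +\infty$ in $V_m$; hence $\J(u,v)\leq 0$ for $\Vert(u,v)\Vert_X\geq R_m$ for some $R_m>0$.

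The main obstacle lies in $(iii)$, namely exhibiting a sphere $\M_m$ on which $\J\geq\varrho_m$ with $\varrho_m\to +\infty$. I would take
\[
\enne_m\ =\ \Bigl\{(u,v)\in X:\ \tfrac{1}{p_1}\Vert u\Vert_{W_1}^{p_1}+\tfrac{1}{p_2}\Vert v\Vert_{W_2}^{p_2}<\rho_m\Bigr\}
\]
for a value $\rho_m>0$ to be tuned. By $(h_2)$ and \eqref{Gsigmamax}, on $Y_m$ it holds
\[
\J(u,v)\ \geq\ \mu_0\Bigl(\tfrac{1}{p_1}\Vert u\Vert_{W_1}^{p_1}+\tfrac{1}{p_2}\Vert v\Vert_{W_2}^{p_2}\Bigr)
-C\bigl(1+|u|_{\overline q_1}^{\overline q_1}+|v|_{\overline q_2}^{\overline q_2}+|u|_1+|v|_1\bigr).
\]
The crucial observation is that, for $w \in Y_{i,m}$, interpolation between $L^{p_i}$ and $L^{p_i^{*}}$ combined with \eqref{lambdan+1} and the Sobolev embedding \eqref{Sobpi} yields
\[
|w|_{\overline q_i}^{\overline q_i}\ \leq\ C\,\lambda_{i,m+1}^{-\kappa_i}\,\Vert w\Vert_{W_i}^{\overline q_i}
\]
for some $\kappa_i>0$, since $\overline q_i<p_i^{*}$ by \eqref{minmaxPerera1}. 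Choosing $\rho_m$ as a suitable positive power of $\min\{\lambda_{1,m+1},\lambda_{2,m+1}\}$ (using $\overline q_i/p_i>1$), the nonlinear term is absorbed by the kinetic one on $\partial\enne_m\cap Y_m$, producing $\J\geq \varrho_m:=\tfrac{\mu_0}{2}\rho_m$ there; since $\lambda_{i,m+1}\nearrow +\infty$ by \eqref{rn0}, we get $\varrho_m\nearrow +\infty$. The delicate part of this step is the treatment of the mixed terms $|u|\,|v|^{s_1}$ and $|u|^{s_2}\,|v|$ appearing in \eqref{Gsigma}, which can be handled in the same spirit via the splittings \eqref{crits11}--\eqref{crits21} already used in Proposition~\ref{PropwCPS}. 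Corollary~\ref{multiple} then delivers the required sequence $((u_m,v_m))_m$ with $\J(u_m,v_m)\nearrow +\infty$, and each such critical point is a weak bounded solution of \eqref{euler} since $X=X_1\times X_2\subset L^\infty(\Omega)\times L^\infty(\Omega)$.
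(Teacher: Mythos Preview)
Your proposal is correct and follows essentially the same route as the paper: you invoke Proposition~\ref{smooth1} and Proposition~\ref{PropwCPS} for regularity and $(wCPS)$, use the pseudo--eigenvalue decomposition $X=(V_{1,m}\times V_{2,m})\oplus(Y_m^{X_1}\times Y_m^{X_2})$, establish condition $(iv)$ via \eqref{Atheta}--\eqref{Btheta} and $(g_4)$ exactly as in Proposition~\ref{geo2}, and obtain condition $(iii)$ via interpolation combined with \eqref{lambdan+1}, which is precisely Proposition~\ref{PropMolt}. The only cosmetic difference is that the paper takes $\M_\varrho$ to be a sphere in the $\Vert\cdot\Vert_W$--norm (and then passes from $\Vert u\Vert_{W_1}^{p_1}+\Vert v\Vert_{W_2}^{p_2}$ to $\Vert(u,v)\Vert_W^p$ with $p=\min\{p_1,p_2\}$), whereas you use a level set of $\tfrac{1}{p_1}\Vert u\Vert_{W_1}^{p_1}+\tfrac{1}{p_2}\Vert v\Vert_{W_2}^{p_2}$; both choices satisfy $(ii)$ and lead to the same estimate, and your handling of the mixed terms is just a re-derivation of \eqref{Gsigmamax}.
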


\begin{remark}
Subcritical growth conditions \eqref{crit_expi} are stronger than the 
``classical'' ones required for Laplacian coupled systems in \cite{BdF}.
In our setting, we need it for proving that weak limits of $(CPS)$--sequences 
in the product Sobolev space $W$ belong also to $L$, i.e. they are bounded functions  
(see \textsl{Step 2} in the proof of Proposition \ref{PropwCPS}).
\end{remark}

Some corollaries to previous Theorem \ref{ThMolt} 
can be stated, both when condition $(g_4)$ is replaced 
by a stronger assumption, which is easier to verify,
and when such a theorem is applied to the special cases obtained by coupling 
Example \ref{ex0} with Example \ref{ex1}, respectively Example \ref{ex2}. 

\begin{corollary}     \label{ThMolt1}
Taking $p_1$, $p_2 > 1$, suppose that $A(x, u)$, $B(x, v)$ and $G(x,u,v)$ satisfy
hypotheses $(h_0)$--$(h_5)$, $(g_0)$--$(g_2)$ and $(g_5)$. Moreover, assume also that
\begin{itemize}
\item[$(g_6)$] $\; \inf\{G(x,w,z):\ x \in \Omega,\ (w,z) \in \R^2 \ 
\hbox{such that $|(w,z)|=R$}\} > 0$, with $R$ as in $(g_2)$;  
\item[$(g_7)$] $\; \theta_1=\theta_2$, with $\theta_1$, $\theta_2$ as in $(h_4)$ and $(g_2)$.
\end{itemize}
Then, the even functional $\J$ in \eqref{functional} possesses a sequence of critical 
points $((u_m,v_m))_m \subset X$ such that $\J(u_m,v_m)\nearrow +\infty$; hence,
problem \eqref{euler} admits infinitely many distinct weak bounded solutions.
\end{corollary}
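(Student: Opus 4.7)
The plan is to show that hypotheses $(g_6)$ and $(g_7)$, combined with the Ambrosetti--Rabinowitz type condition $(g_2)$ already assumed, imply condition $(g_4)$ of Theorem~\ref{ThMolt}, so that Theorem~\ref{ThMolt} applies verbatim. Set $\theta := \theta_1 = \theta_2$ as given by $(g_7)$. Then $(g_2)$ rewrites as the homogeneous Euler type inequality
\[
0 < G(x,u,v) \le \theta\bigl(G_u(x,u,v)\,u + G_v(x,u,v)\,v\bigr)
\qquad \text{for a.e. } x \in \Omega, \text{ whenever } |(u,v)| \ge R.
\]
This is the standard input that makes the radial map $t \mapsto G(x,tu,tv)$ grow like a power of $t$, so the strategy is the classical one: integrate along rays emanating from the sphere $\{|(w,z)|=R\}$.

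More precisely, I would fix $(u,v)$ with $|(u,v)| \ge R$ and set $(w,z) := \frac{R}{|(u,v)|}(u,v)$, so that $|(w,z)| = R$ and $(u,v) = t(w,z)$ with $t = |(u,v)|/R \ge 1$. For $s \in [1,t]$ one has $|(sw,sz)| = sR \ge R$, so the inequality above applied at $(sw,sz)$ gives
\[
\frac{d}{ds}G(x,sw,sz) = wG_u(x,sw,sz)+zG_v(x,sw,sz) \ge \frac{1}{\theta s}\,G(x,sw,sz).
\]
Since $G(x,sw,sz) > 0$ on this range, dividing and integrating from $1$ to $t$ yields
\[
G(x,u,v) \ge G(x,w,z)\,t^{1/\theta} = G(x,w,z)\left(\frac{|(u,v)|}{R}\right)^{1/\theta}.
\]
Now hypothesis $(g_6)$ provides a constant $c_0 > 0$ such that $G(x,w,z) \ge c_0$ for a.e. $x \in \Omega$ and every $(w,z)$ with $|(w,z)| = R$, so
\[
G(x,u,v) \ge c_0 R^{-1/\theta}\,|(u,v)|^{1/\theta}
\qquad \text{for a.e. } x \in \Omega, \text{ all } |(u,v)| \ge R.
\]

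Using the elementary inequality $|u|^{1/\theta}+|v|^{1/\theta} \le 2\,|(u,v)|^{1/\theta}$ and recalling $\theta_1 = \theta_2 = \theta$, we immediately obtain
\[
\liminf_{|(u,v)|\to +\infty}\frac{G(x,u,v)}{|u|^{1/\theta_1}+|v|^{1/\theta_2}}
\ge \frac{c_0\,R^{-1/\theta}}{2} > 0
\qquad \text{uniformly a.e. in } \Omega,
\]
which is exactly $(g_4)$. All the other hypotheses of Theorem~\ref{ThMolt}, namely $(h_0)$--$(h_5)$, $(g_0)$--$(g_3)$ and $(g_5)$, are assumed directly in the statement of the corollary, so Theorem~\ref{ThMolt} applies and yields the desired sequence of critical points with diverging critical values.

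The only delicate point in this plan is the radial integration step: one must be sure that the positivity of $G$ is preserved along the whole ray $s \in [1,t]$ so that the logarithmic derivative argument is legitimate, but this is automatic since $(g_2)$ gives $G(x,sw,sz) > 0$ as soon as $|(sw,sz)| \ge R$, which holds throughout the interval. Everything else reduces to book-keeping and invoking Theorem~\ref{ThMolt}.
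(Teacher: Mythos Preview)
Your argument is correct and follows essentially the same route as the paper: integrate the Ambrosetti--Rabinowitz inequality along rays from the sphere $\{|(w,z)|=R\}$ to obtain $G(x,u,v)\ge c_0 R^{-1/\theta}|(u,v)|^{1/\theta}$, and then compare with $|u|^{1/\theta}+|v|^{1/\theta}$ to verify $(g_4)$; the paper packages the integration step into the auxiliary inequality \eqref{htheta}, which under $(g_7)$ reduces exactly to yours. One harmless slip: in the last paragraph you list ``$(g_0)$--$(g_3)$'' among the hypotheses of Theorem~\ref{ThMolt}, but $(g_3)$ is neither required there nor assumed in the corollary --- it should read $(g_0)$--$(g_2)$.
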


\begin{corollary}\label{cor1}
Let us consider problem \eqref{euler} with $p_1$, $p_2 > 1$, 
and $A(x,u)$, $B(x,v)$ as in \eqref{ex01} such that conditions 
\eqref{ex05}--\eqref{ex03} hold.
Moreover, consider $G(x,u,v)$ defined as in \eqref{ex11} and such that
\eqref{ex13}--\eqref{ex17}, \eqref{ex12} are satisfied.
If
\begin{equation}     \label{cor11}
p_1+\gamma_1 < q_1 < p^*_1,\qquad p_2+\gamma_2 < q_2 < p^*_2,
\end{equation}
\begin{equation}     \label{cor12}
\gamma_4\ \frac{q_1-1}{q_1-\gamma_3}\ <\ \frac{p_1}{N}\left(1 - \frac{1}{p_1^*}\right) p^*_2,\qquad
\gamma_3\ \frac{q_2-1}{q_2-\gamma_4}\ <\ \frac{p_2}{N}\left(1 - \frac{1}{p_2^*}\right) p^*_1,
\end{equation}
then functional
\[
\begin{split}
\J_1(u,v) =\ &\frac{1}{p_1} \int_\Omega(A_1(x) + A_2(x) |u|^{\gamma_1}) |\nabla u|^{p_1} dx 
+ \frac{1}{p_2} \int_\Omega(B_1(x) + B_2(x) |v|^{\gamma_2}) |\nabla v|^{p_2} dx \\
&- \int_\Omega(|u|^{q_1} + c_* |u|^{\gamma_3} |v|^{\gamma_4} + |v|^{q_2})dx 
\end{split}
\]
has infinitely many critical points in $X$.
\end{corollary}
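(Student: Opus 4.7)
The strategy is to reduce everything to Theorem \ref{ThMolt} by checking, one at a time, that the explicit $A$, $B$ and $G$ given in \eqref{ex01} and \eqref{ex11} fulfil all of its hypotheses under the quantitative assumptions \eqref{cor11}, \eqref{cor12}. In fact, assumptions $(h_0)$--$(h_4)$ on the coefficients have already been worked out in Example \ref{ex0}: under \eqref{ex05}--\eqref{ex03} together with the choice \eqref{ex06}, namely $0<\theta_i<\frac{1}{p_i+\gamma_i}$, all four are satisfied, with $\mu_2=\min\{1-(p_1+\gamma_1)\theta_1,\,1-(p_2+\gamma_2)\theta_2\}>0$. The symmetry $(h_5)$ is immediate since $|u|^{\gamma_1}$ and $|v|^{\gamma_2}$ are even. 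Analogously, Example \ref{ex1} shows that if \eqref{ex13}--\eqref{ex17} and \eqref{ex12} hold, then $(g_0)$ is clear, and \eqref{gucr}--\eqref{gvcr} are satisfied with exponents $s_1=\gamma_4\frac{q_1-1}{q_1-\gamma_3}$ and $s_2=\gamma_3\frac{q_2-1}{q_2-\gamma_4}$; condition $(g_2)$ follows by choosing $\theta_i\geq \frac{1}{q_i}$ as in \eqref{ex15}. The evenness $(g_5)$ is obvious.

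The next step is to match the subcritical conditions \eqref{crit_exp} and \eqref{crit_expi} with the explicit exponents coming from Example \ref{ex1}. The requirement \eqref{crit_exp} reads $q_1<p_1^*$, $q_2<p_2^*$, which is directly contained in \eqref{cor11}. Plugging the values of $s_1$, $s_2$ into \eqref{crit_expi} gives
\[
\gamma_4\,\frac{q_1-1}{q_1-\gamma_3}\,<\,\frac{p_1}{N}\Bigl(1-\frac{1}{p_1^*}\Bigr)p_2^*,\qquad
\gamma_3\,\frac{q_2-1}{q_2-\gamma_4}\,<\,\frac{p_2}{N}\Bigl(1-\frac{1}{p_2^*}\Bigr)p_1^*,
\]
which is exactly \eqref{cor12}. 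Hence the full subcritical assumption $(g_1)$ holds.

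The one genuinely delicate point — and the main obstacle of the proof — is to choose a single pair $(\theta_1,\theta_2)$ that simultaneously meets the constraints coming from $(h_4)$ and $(g_2)$, and then to deduce $(g_4)$ from it. From Examples \ref{ex0} and \ref{ex1} the admissible range is
\[
\frac{1}{q_i}\,\le\,\theta_i\,<\,\frac{1}{p_i+\gamma_i},\qquad i\in\{1,2\},
\]
and this interval is nonempty precisely when $p_i+\gamma_i<q_i$, which is the first part of \eqref{cor11}. Fixing such $\theta_i$, one has $\frac{1}{\theta_i}\le q_i$, so that, using $c_*\ge0$ and the pointwise lower bound $G(x,u,v)\ge |u|^{q_1}+|v|^{q_2}$, the growth condition $(g_4)$ follows at once from
\[
\frac{G(x,u,v)}{|u|^{1/\theta_1}+|v|^{1/\theta_2}}\,\ge\,\frac{|u|^{q_1}+|v|^{q_2}}{|u|^{1/\theta_1}+|v|^{1/\theta_2}}
\]
as $|(u,v)|\to+\infty$, uniformly in $x$.

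Once all the hypotheses of Theorem \ref{ThMolt} have been verified in this way, the theorem applies to $\J=\J_1$ and yields a sequence $((u_m,v_m))_m\subset X$ of critical points with $\J_1(u_m,v_m)\nearrow+\infty$, which is the desired conclusion. No new analytical estimate is required beyond the bookkeeping above; the heart of the argument lies in making the ranges of $\theta_i$ coming from the two examples overlap, which is exactly what is encoded in the first inequality of \eqref{cor11}, while the second inequality and \eqref{cor12} serve to realize the subcritical growth required by the abstract theorem.
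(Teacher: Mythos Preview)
Your proposal is correct and follows essentially the same route as the paper: both verify the hypotheses of Theorem \ref{ThMolt} by invoking Examples \ref{ex0} and \ref{ex1}, pick $\theta_i$ in the interval $\bigl[\tfrac{1}{q_i},\tfrac{1}{p_i+\gamma_i}\bigr)$ made nonempty by \eqref{cor11}, read off \eqref{crit_exp}--\eqref{crit_expi} from \eqref{cor11}--\eqref{cor12} via \eqref{ex16}, and obtain $(g_4)$ from the pointwise lower bound $G\ge |u|^{q_1}+|v|^{q_2}$. The paper only adds the explicit elementary inequality \eqref{corbis}, namely $\tfrac{|u|^{q_1}+|v|^{q_2}}{|u|^{1/\theta_1}+|v|^{1/\theta_2}}\ge\tfrac12$ for $|(u,v)|\ge 2$, to make the last step concrete; you might want to state this rather than leave $(g_4)$ as ``follows at once''.
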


\begin{corollary}\label{cor2}
Let us consider problem \eqref{euler} with $p_1$, $p_2 > 1$, 
and $A(x,u)$, $B(x,v)$ as in \eqref{ex01} such that conditions 
\eqref{ex05}--\eqref{ex03} hold.
Moreover, consider $G(x,u,v)$ defined as in \eqref{ex21} and such that
\eqref{ex26} is satisfied.
If
\begin{equation}     \label{cor21}
p_1+\gamma_1 < \gamma_3 < q_1 < p^*_1,\qquad
p_2+\gamma_2 < \gamma_4 < q_2 < p^*_2,
\end{equation}
\begin{equation}     \label{cor22}
\gamma_3\ <\ \frac{p_2}{N}\left(1 - \frac{1}{p_2^*}\right) p^*_1,\qquad
\gamma_4\ <\ \frac{p_1}{N}\left(1 - \frac{1}{p_1^*}\right) p^*_2,
\end{equation}
then functional
\[
\begin{split}
\J_2(u,v) =\ &\frac{1}{p_1} \int_\Omega(A_1(x) + A_2(x) |u|^{\gamma_1}) |\nabla u|^{p_1} dx 
+ \frac{1}{p_2} \int_\Omega(B_1(x) + B_2(x) |v|^{\gamma_2}) |\nabla v|^{p_2} dx \\
&- \int_\Omega(|u|^{q_1} + |u|^{\gamma_3} \lg(v^2+1) +\lg(u^2+1) |v|^{\gamma_4} + |v|^{q_2})dx 
\end{split}
\]
has infinitely many critical points in $X$.
\end{corollary}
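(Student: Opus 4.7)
The plan is to apply Theorem \ref{ThMolt} directly to the functional $\J_2$, so the entire argument reduces to a verification of the structural hypotheses for the explicit $A$, $B$, and $G$ given in the statement. I would first note that $A$ and $B$ fit the framework of Example \ref{ex0}: conditions \eqref{ex05}--\eqref{ex03} are exactly what that example assumes, so $(h_0)$--$(h_3)$ are immediate, and $(h_4)$ holds for any pair $(\theta_1,\theta_2)$ with $\theta_i \in \bigl(0, \tfrac{1}{p_i+\gamma_i}\bigr)$. The evenness assumption $(h_5)$ is obvious since $|u|^{\gamma_1}$ and $|v|^{\gamma_2}$ are even.

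Next, I would check that $G$ fits Example \ref{ex2}. This gives $(g_0)$ at once, and $(g_1)$ follows with $s_1=\gamma_4$ and $s_2=\gamma_3$ once $\gamma_3<q_1$ and $\gamma_4<q_2$, both of which are part of \eqref{cor21}. The subcritical thresholds \eqref{crit_exp} and \eqref{crit_expi} translate, after this choice of $s_1$ and $s_2$, literally into \eqref{cor21} and \eqref{cor22}. Condition $(g_2)$, as stated in Example \ref{ex2}, requires $\theta_1 \ge \tfrac{1}{\gamma_3}$ and $\theta_2 \ge \tfrac{1}{\gamma_4}$. The main step of the proof is therefore showing compatibility: one must produce a single pair $(\theta_1,\theta_2)$ satisfying the constraints of both examples, i.e.
\[
\tfrac{1}{\gamma_3} \le \theta_1 < \tfrac{1}{p_1+\gamma_1},\qquad
\tfrac{1}{\gamma_4} \le \theta_2 < \tfrac{1}{p_2+\gamma_2}.
\]
These intervals are non-empty precisely when $\gamma_3 > p_1+\gamma_1$ and $\gamma_4 > p_2+\gamma_2$, which is exactly the remaining content of \eqref{cor21}. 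Symmetry $(g_5)$ is clear since every term of $G$ is even in each variable separately.

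It remains to verify $(g_4)$. Since $\log(t^2+1)\ge 0$, one has the pointwise lower bound $G(x,u,v)\ge |u|^{q_1}+|v|^{q_2}$, while the chosen $\theta_i$ give $1/\theta_1 \le \gamma_3 < q_1$ and $1/\theta_2 \le \gamma_4 < q_2$; an elementary case analysis (either $|u|$ or $|v|$ dominates, or both are large) then shows that the ratio $G(x,u,v)/(|u|^{1/\theta_1}+|v|^{1/\theta_2})$ stays bounded away from zero uniformly in $x$ as $|(u,v)|\to+\infty$. With all hypotheses of Theorem \ref{ThMolt} in place, its conclusion delivers the sequence of critical points. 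The only genuinely nontrivial point in the argument is the compatibility of the two constraints on $\theta_i$, which is the reason the gap conditions $\gamma_3>p_1+\gamma_1$ and $\gamma_4>p_2+\gamma_2$ appear in \eqref{cor21}; everything else is bookkeeping.
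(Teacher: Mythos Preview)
Your proposal is correct and follows essentially the same route as the paper: both proofs fix $\theta_i$ in the interval $[\tfrac{1}{\gamma_{i+2}}, \tfrac{1}{p_i+\gamma_i})$ (the paper writes this as $p_i+\gamma_i < \tfrac{1}{\theta_i} \le \gamma_{i+2}$), observe that \eqref{cor21} makes these intervals non-empty, read off \eqref{crit_exp}--\eqref{crit_expi} from \eqref{cor21}--\eqref{cor22} via $s_1=\gamma_4$, $s_2=\gamma_3$, and verify $(g_4)$ from the lower bound $G\ge |u|^{q_1}+|v|^{q_2}$ together with $q_i > \tfrac{1}{\theta_i}$. The paper's ``elementary case analysis'' for $(g_4)$ is the explicit inequality $\tfrac{|u|^{q_1}+|v|^{q_2}}{|u|^{1/\theta_1}+|v|^{1/\theta_2}}\ge \tfrac12$ for $|(u,v)|\ge 2$, which is exactly what you describe.
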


Now, we can prove our main results. To this aim,
for simplicity we assume that 
\begin{equation}\label{level0}
\int_\Omega G(x,0,0) dx = 0, 
\end{equation}
otherwise we can replace functional $\J(u,v)$ in \eqref{functional}
with $\J(u,v) + \int_\Omega G(x,0,0) dx$ which has the same differential
$d\J(u,v)$ in \eqref{diff}.

\begin{proof}[Proof of Theorem \ref{ThExist}]
Without loss of generality, we suppose $1 < p_i < N$ for both $i\in\{1,2\}$,
otherwise the proof is simpler. \\
Moreover,
as $\mu_0\min\left\lbrace \frac{\lambda_{1,1}}{p_1}, \frac{\lambda_{2,1}}{p_2}\right\rbrace >0$,
from $(g_3)$ we can take $\bar{\lambda} > 0$ such that
\begin{equation}   \label{lambdasegnato}
\limsup_{(u, v)\to 0} \frac{G(x, u, v)}{\vert u\vert^{p_1} +\vert v\vert^{p_2}} 
\ < \ \bar{\lambda}\ <\ 
\mu_0\min\left\lbrace \frac{\lambda_{1,1}}{p_1}, \frac{\lambda_{2,1}}{p_2}\right\rbrace.
\end{equation}
We claim that \eqref{Gsigmamax} and \eqref{lambdasegnato} imply the
existence of a suitable constant $\sigma^* >0$ such that
\begin{equation}   \label{Gcompleta}
G(x, u, v) \le \bar{\lambda}(\vert u\vert^{p_1} + \vert v\vert^{p_2}) 
+ \sigma^* (\vert u\vert^{\overline{q}_1} +\vert v\vert^{\overline{q}_2})
\quad \mbox{ for a.e. $x\in\Omega$, all $(u,v) \in \R^2$.}
\end{equation}
In fact, from one hand inequality \eqref{lambdasegnato} implies that a
radius $\varrho^* > 0$ exists such that
\[
G(x, u, v) \le \bar{\lambda}(\vert u\vert^{p_1} + \vert v\vert^{p_2}) 
\quad \mbox{ for a.e. $x\in\Omega$ if $|(u,v)| < \varrho^*$.}
\]
On the other hand, if $|(u,v)| \ge \varrho^*$, then either $|u| \ge \frac{\varrho^*}{2}$,
hence $1 \le \frac{2}{\varrho^*}|u|$ and
$|u| \le \left(\frac{2}{\varrho^*}\right)^{\overline{q}_1-1}|u|^{\overline{q}_1}$,
or $|v| \ge \frac{\varrho^*}{2}$,
hence $1 \le \frac{2}{\varrho^*}|v|$ and $|v| \le \left(\frac{2}{\varrho^*}\right)^{\overline{q}_2-1}|v|^{\overline{q}_2}$.
So, analyzing all the possible cases,  
direct computations imply that 
\[
1 \le \frac{2}{\varrho^*}(|u| + |v|)\quad \hbox{and}\quad 
\vert u\vert +\vert v\vert \le 2 \left(\frac{2}{\varrho^*}\right)^{\overline{q}_1-1}|u|^{\overline{q}_1}
+ 2 \left(\frac{2}{\varrho^*}\right)^{\overline{q}_2-1}|v|^{\overline{q}_2}
\quad \hbox{if $|(u,v)| \ge \varrho^*$.}
\]
Whence, \eqref{Gcompleta} follows from \eqref{Gsigmamax}.\\
Now, from definition \eqref{functional}, hypothesis $(h_2)$ 
and estimate \eqref{Gcompleta}, we have that
\[
\begin{split}
\J(u, v)\ \geq\ &\frac{\mu_0}{p_1}\int_{\Omega} \vert\nabla u\vert^{p_1} dx +
\frac{\mu_0}{p_2}\int_{\Omega}\vert\nabla v\vert^{p_2} dx
- \bar{\lambda} \int_{\Omega} \vert u\vert^{p_1} dx -
\bar{\lambda} \int_{\Omega} \vert v\vert^{p_2} dx\\
&- \sigma^* \int_{\Omega} \vert u\vert^{\overline{q}_1} dx 
- \sigma^* \int_{\Omega} \vert v\vert^{\overline{q}_2} dx ,
\end{split}
\]
or better, from \eqref{Sobpi}, \eqref{minmaxPerera1} and \eqref{primo} it follows that
\begin{equation}   \label{geo1}
\begin{split}
\J(u, v)\ \geq\ & \Vert u\Vert_{W_1}^{p_1}\left(\frac{\mu_0}{p_1} -\frac{\bar{\lambda}}{\lambda_{1,1} }
- \sigma^* \tau_{1,\overline{q}_1}^{\overline{q}_1} \Vert u\Vert_{W_1}^{\overline{q}_1 -p_1}\right)\\
& + \Vert v\Vert_{W_2}^{p_2}\left(\frac{\mu_0}{p_2} -\frac{\bar{\lambda}}{\lambda_{2,1}} 
- \sigma^* \tau_{2,\overline{q}_2}^{\overline{q}_2}\Vert v\Vert_{W_2}^{\overline{q}_2- p_2}\right)\\
\geq\ & \Vert u\Vert_{W_1}^{p_1}\left(\frac{\mu_0}{p_1} -\frac{\bar{\lambda}}{\lambda_{1,1} }
- \sigma^* \tau_{1,\overline{q}_1}^{\overline{q}_1} \Vert(u,v)\Vert_{W}^{\overline{q}_1 -p_1}\right)\\
& + \Vert v\Vert_{W_2}^{p_2}\left(\frac{\mu_0}{p_2} -\frac{\bar{\lambda}}{\lambda_{2,1}} 
- \sigma^* \tau_{2,\overline{q}_2}^{\overline{q}_2}\Vert(u,v)\Vert_{W}^{\overline{q}_2- p_2}\right)
\end{split}
\end{equation}
for all $(u,v)\in X$. 
Thus, from \eqref{minmaxPerera1} and \eqref{lambdasegnato} we have that a radius $R_0 > 0$ 
exists such that
\begin{equation}   \label{geo21}
\frac{\mu_0}{p_1} -\frac{\bar{\lambda}}{\lambda_{1,1} }
- \sigma^* \tau_{1,\overline{q}_1}^{\overline{q}_1} R_0^{\overline{q}_1 -p_1}\ >\ 0,
\qquad
\frac{\mu_0}{p_2} -\frac{\bar{\lambda}}{\lambda_{2,1}} 
- \sigma^* \tau_{2,\overline{q}_2}^{\overline{q}_2}R_0^{\overline{q}_2- p_2}\ >\ 0;
\end{equation}
hence, \eqref{geo1}, \eqref{geo21} and direct computations imply 
the existence of a constant $\varrho_0 > 0$ such that
\begin{equation}   \label{geo3}
\J(u, v)\ \geq\ \varrho_0\quad \hbox{if $\Vert(u,v)\Vert_{W} = R_0$.}
\end{equation}
At last, taking for example $\varphi_{1,1} \in X_1$, with $\varphi_{1,1}$ first eigenvalue 
of $-\Delta_{p_1}$ in $W_1$ so that \eqref{eig} holds with $i = 1$, and $R \ge 1$ as in
\eqref{rmkPerera}, we can consider 
\begin{equation}   \label{geo6}
\bar{u}(x) = R \frac{\varphi_{1,1}(x)}{|\varphi_{1,1}(x)|} 
\quad \hbox{for a.e. $x \in \Omega$.}
\end{equation}
Then, fixing any $t \ge 1$, from \eqref{functional}, \eqref{Atheta}, \eqref{rmkPerera}
and direct computations it follows that
\[
\J(t\bar{u},0) 
\le\  t^{p_1} \frac{a_1}{p_1} \Vert\bar{u}\Vert_{W_1}^{p_1} +
t^{\frac{1}{\theta_1}(1-\mu_2)} \frac{a_2}{p_1} |\bar{u}|_\infty^{\frac1{\theta_1}(1 - \theta_1p_1-\mu_2)} \|\bar{u}\|_{W_1}^{p_1} 
 - t^{\frac{1}{\theta_1}} \int_{\Omega}h_1(x)|\bar{u}|^{\frac{1}{\theta_1}} dx,
\]
with $0 < \int_{\Omega}h_1(x)|\bar{u}|^{\frac{1}{\theta_1}} dx < +\infty$ 
as $\bar{u}$, $h_1 \in L^\infty(\Omega)$ with both 
$h_1(x) > 0$ (see Remark \ref{rmkPerera0}) and $\bar{u}(x) > 0$ (from \eqref{eig}
and \eqref{geo6}) for a.e. $x \in \Omega$.
Hence, \eqref{thetai} implies that 
\[
\J(t\bar{u}, 0)\longrightarrow -\infty \quad \mbox{ as } t\to +\infty.
\]
Thus, $\bar{e}_1\in X_1$ exists such that 
\begin{equation}   \label{geo4}
\Vert(\bar{e}_1,0)\Vert_{W} > R_0\quad \hbox{and}\quad 
\J(\bar{e}_1,0) <\varrho_0.
 \end{equation}
Finally, from \eqref{level0} we have that $\J(0,0) = 0$ and, 
summing up, Propositions \ref{smooth1} and \ref{PropwCPS} together 
with \eqref{geo3}, \eqref{geo4} allow us to apply Theorem \ref{mountainpass}.
So, a critical point $(u^*,v^*)\in X$ exists such that
$\J(u^*,v^*) \ge \varrho_0 > 0$.
\end{proof}

Now, we want to prove Theorem \ref{ThMolt} by applying Corollary \ref{multiple}. 
To this aim, we need the following ``geometric'' estimates.
 
\begin{proposition}\label{geo2}
Assume that hypotheses $(h_0)$--$(h_2)$, $(h_4)$, $(g_0)$--$(g_2)$ and $(g_4)$ hold.
Then, taking any finite dimensional subspace $V$ of $X$, there exists
$R_V > 0$ such that
\[
\J(u,v) \le 0\qquad \hbox{for all $(u,v) \in V$, $\|(u,v)\|_X \ge R_V$.}
\]
In particular, $\J$ is bounded from above in $V$.
\end{proposition}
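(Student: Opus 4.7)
The plan is to bound $\J(u,v)$ from above on $V$ by a sum of two one-variable functions that are separately coercive to $-\infty$ with respect to $\|u\|_{W_1}$ and $\|v\|_{W_2}$, and then to exploit the equivalence of norms on the finite--dimensional space $V$.

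First, using \eqref{Atheta} together with the trivial bound $\int_\Omega |u|^\gamma |\nabla u|^{p_1}dx \le |u|_\infty^\gamma \|u\|_{W_1}^{p_1}$, and analogously \eqref{Btheta}, I would estimate
\[
\frac{1}{p_1}\int_\Omega A(x,u)|\nabla u|^{p_1} dx\ \le\ \frac{a_1}{p_1}\|u\|_{W_1}^{p_1}
+ \frac{a_2}{p_1}|u|_\infty^{(1 - p_1 \theta_1 - \mu_2)/\theta_1}\|u\|_{W_1}^{p_1},
\]
and similarly for the $B$-term. For the potential, combining $(g_4)$ with the bounded-region upper bound coming from \eqref{Gsigmamax} yields global constants $c_*, C_* > 0$ such that
\[
G(x,u,v)\ \ge\ c_*\bigl(|u|^{1/\theta_1} + |v|^{1/\theta_2}\bigr) - C_* \qquad \hbox{for a.e. } x\in\Omega,\ (u,v) \in \R^2.
\]
Inserting these estimates into \eqref{functional}, the upper bound on $\J(u,v)$ splits cleanly as a $u$-only part plus a $v$-only part plus a constant.

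Next, since $V$ is finite--dimensional, its projections onto $X_1$ and $X_2$ are finite--dimensional subspaces on which all norms are equivalent; hence there exist $C_V, c_V > 0$, depending only on $V$, with $|u|_\infty \le C_V \|u\|_{W_1}$ and $|u|_{1/\theta_1} \ge c_V \|u\|_{W_1}$ for every $(u,v) \in V$, and likewise for $v$. Substituting these bounds, the upper bound collapses into $P_1(\|u\|_{W_1}) + P_2(\|v\|_{W_2}) + C$, where
\[
P_i(s)\ :=\ A_1^{(i)} s^{p_i} + A_2^{(i)} s^{(1-\mu_2)/\theta_i} - A_3^{(i)} s^{1/\theta_i}
\]
with $A_j^{(i)} > 0$. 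Because \eqref{thetai} and $\mu_2 > 0$ give $1/\theta_i > p_i$ and $1/\theta_i > (1-\mu_2)/\theta_i$, each $P_i$ is bounded from above and $P_i(s)\to -\infty$ as $s\to +\infty$. By norm equivalence on the finite--dimensional $V$, divergence of $\|(u,v)\|_X$ forces divergence of $\max(\|u\|_{W_1},\|v\|_{W_2})$, so $\J(u,v) \to -\infty$ along any such sequence; the existence of $R_V$ as required follows. The boundedness above of $\J$ on $V$ is then automatic from the continuity of $\J$ on the compact set $\{(u,v) \in V : \|(u,v)\|_X \le R_V\}$.

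The main obstacle is keeping the cross-growth term $|u|_\infty^{(1-p_1\theta_1-\mu_2)/\theta_1}\|u\|_{W_1}^{p_1}$ under control: after the norm-equivalence collapse it contributes only the power $\|u\|_{W_1}^{(1-\mu_2)/\theta_1}$, and its exponent must stay strictly below $1/\theta_1$. This is ensured precisely by the strict inequality $\mu_2 > 0$ built into $(h_4)$, which is exactly what allows the negative contribution extracted from $(g_4)$ to dominate every positive contribution on $V$.
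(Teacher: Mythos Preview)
Your proof is correct and follows essentially the same route as the paper: bound the principal parts via \eqref{Atheta}--\eqref{Btheta}, extract a global lower bound $G(x,u,v)\ge c_*(|u|^{1/\theta_1}+|v|^{1/\theta_2})-C_*$ from $(g_4)$ combined with the bounded--region estimate, and then use equivalence of norms on the finite--dimensional projections of $V$ so that the dominant negative power $1/\theta_i$ forces $\J\to -\infty$. The only cosmetic differences are that the paper works with $\|\cdot\|_{X_i}$ rather than $\|\cdot\|_{W_i}$ as the reference norm, and it cites \eqref{Gsigma} (valid for all $p_i>1$) rather than \eqref{Gsigmamax} for the bounded--region control; neither affects the argument.
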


\begin{proof}
Considering \eqref{Xnorms}, from \eqref{functional}, \eqref{Atheta}, \eqref{Btheta}
it follows that
\[
\begin{split}
\J(u,v) \le &\ \frac{a_1}{p_1} \|u\|^{p_1}_{W_1} 
+ \frac{a_2}{p_1} |u|_{\infty}^{\frac{1}{\theta_1}(1-p_1\theta_1 -\mu_2)} \|u\|^{p_1}_{W_1}
+ \frac{b_1}{p_2} \|v\|^{p_2}_{W_2} 
+ \frac{b_2}{p_2} |v|_{\infty}^{\frac{1}{\theta_2}(1-p_2\theta_2 -\mu_2)} \|v\|^{p_2}_{W_2}\\
&\ - \int_\Omega G(x,u,v) dx\\
\le &\ \frac{a_1}{p_1} \|u\|^{p_1}_{X_1} 
+ \frac{a_2}{p_1} \|u\|_{X_1}^{\frac{1}{\theta_1}(1-\mu_2)} 
+ \frac{b_1}{p_2} \|v\|^{p_2}_{X_2} 
+ \frac{b_2}{p_2} \|v\|_{X_2}^{\frac{1}{\theta_2}(1 -\mu_2)}
 - \int_\Omega G(x,u,v) dx
\end{split}
\] 
for all $(u,v) \in X$.
Now, from $(g_4)$ we can take $\bar{\bar{\lambda}} \in \R$ such that
\[
\liminf_{|(u, v)|\to +\infty} 
\frac{G(x, u, v)}{\vert u\vert^{\frac{1}{\theta_1}} + \vert v\vert^{\frac{1}{\theta_2}}}\
> \ \bar{\bar{\lambda}}\ >\ 0 \quad \hbox{uniformly a.e. in $\Omega$,}
\]
then $R_1 > 0$ exists such that
\begin{equation}\label{ggeo1}
G(x, u, v)\ \ge \ \bar{\bar{\lambda}}\ 
(\vert u\vert^{\frac{1}{\theta_1}} + \vert v\vert^{\frac{1}{\theta_2}}) 
\quad \hbox{for a.e. $x \in \Omega$ if $|(u,v)| \ge R_1$.}
\end{equation}
Thus, taking $(u,v) \in V$ such that
\begin{equation}\label{suR}
|(u,v)|_\infty > R_1 \qquad \then\qquad 
\meas(\Omega_{R_1}) > 0
\end{equation}
with $\Omega_{R_1} = \{x \in \Omega: |(u(x),v(x))| > R_1\}$.
Then, since \eqref{Gsigma}
implies
\[
\int_{\Omega\setminus \Omega_{R_1}} |G(x,u,v)| dx \le C_1
\]
for a suitable $C_1 > 0$, from \eqref{ggeo1}, \eqref{suR}
and direct computations it follows that 
\[
\begin{split}
\int_{\Omega} G(x,u,v) dx\ &\ge\ \bar{\bar{\lambda}}\
\int_{\Omega_{R_1}} (\vert u\vert^{\frac{1}{\theta_1}} + \vert v\vert^{\frac{1}{\theta_2}}) dx - C_1\\
&\ge\ \bar{\bar{\lambda}}\ \left(\int_{\Omega}\vert u\vert^{\frac{1}{\theta_1}} dx 
+ \int_{\Omega} \vert v\vert^{\frac{1}{\theta_2}}\right) \ -\ C_2
\end{split} 
 \]
for a suitable $C_2 > 0$. 
At last, summing up, as all the norms are equivalent 
in the finite dimensional space $V = V_1 \times V_2$ and the same is in 
both the projections $V_1$ in $X_1$ and $V_2$ in $X_2$, 
if $\|(u,v)\|_{X}$ is large enough then
\eqref{suR} holds and we have that
\[
\J(u,v) \le  \frac{a_1}{p_1} \|u\|^{p_1}_{X_1} 
+ \frac{a_2}{p_1} \|u\|_{X_1}^{\frac{1}{\theta_1}(1-\mu_2)} 
+ \frac{b_1}{p_2} \|v\|^{p_2}_{X_2} 
+ \frac{b_2}{p_2} \|v\|_{X_2}^{\frac{1}{\theta_2}(1 -\mu_2)}
 - C_3 \|u\|_{X_1}^{\frac1{\theta_1}} - C_4 \|v\|_{X_2}^{\frac1{\theta_2}} + C_5
\]
for suitable constants $C_j > 0$; hence, condition \eqref{thetai} implies that
\[
\J(u,v) \to -\infty \quad \hbox{ as $\|(u,v)\|_{X} \to +\infty$}
\]
which completes the proof.
\end{proof}

\begin{proposition}     \label{PropMolt}
Assume that hypotheses $(h_0)$--$(h_2)$, $(h_4)$, $(g_0)$--$(g_2)$ hold.
Then, for any fixed $\varrho\in\R$ there exists $m=m(\varrho)\geq 1$ and $r_m >0$ such that
\[
(u,v)\in Y_m^{X_1}\times Y_m^{X_2}, \quad \Vert (u,v)\Vert_{W} = r_m
\qquad \implies\qquad \J(u, v)\geq \varrho.
\]
\end{proposition}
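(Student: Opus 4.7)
The plan is to combine three ingredients: the uniform ellipticity bound $(h_2)$, the subcritical growth estimate \eqref{Gsigmamax} from Remark \ref{rmkcrit1}, and the pseudo--eigenvalue inequality \eqref{lambdan+1}, which provides a Poincar\'e--type control on $Y_{i,m}$ that tightens as $m$ grows. First, I would use $(h_2)$ and \eqref{Gsigmamax} to derive, for every $(u,v) \in X$, the crude lower bound
\[
\J(u,v)\ \ge\ \frac{\mu_0}{p_1}\|u\|_{W_1}^{p_1} + \frac{\mu_0}{p_2}\|v\|_{W_2}^{p_2}
- C\left(1 + |u|_1 + |v|_1 + |u|_{\overline{q}_1}^{\overline{q}_1} + |v|_{\overline{q}_2}^{\overline{q}_2}\right),
\]
with $\overline{q}_i$ as in \eqref{Ggrow} and $C>0$ depending only on $\Omega$ and the data.

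The core step will be to force every Lebesgue seminorm in this remainder to vanish on $Y_m^{X_i}$ as $m$ grows. I would exploit \eqref{lambdan+1} in the form $|u|_{p_i}^{p_i} \le \lambda_{i,m+1}^{-1}\|u\|_{W_i}^{p_i}$ on $Y_{i,m}$, and combine it with the Sobolev embedding of $W_i$ into $L^{p_i^*}(\Omega)$ (replaced by $L^\infty(\Omega)$ when $p_i\ge N$) via H\"older interpolation in Lebesgue spaces; the outcome is that, for every $r \in [1,p_i^*[$,
\[
|u|_r\ \le\ \alpha_{m,r,i}\|u\|_{W_i}\qquad\hbox{for all }u \in Y_{i,m},
\]
with $\alpha_{m,r,i}\to 0$ as $m\to+\infty$, thanks to \eqref{rn0}. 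Estimate \eqref{minmaxPerera1} makes this applicable to $r \in \{1,\overline{q}_i\}$ for both $i=1,2$; letting $\eta_m$ be the maximum of the finitely many constants so produced, on $Y_m^{X_1}\times Y_m^{X_2}$ the preceding inequality upgrades to
\[
\J(u,v)\ \ge\ \frac{\mu_0}{p_1}\|u\|_{W_1}^{p_1} + \frac{\mu_0}{p_2}\|v\|_{W_2}^{p_2}
- C_0 - C_0\eta_m(\|u\|_{W_1}+\|v\|_{W_2})
- C_0\eta_m^{\overline{q}_1}\|u\|_{W_1}^{\overline{q}_1}
- C_0\eta_m^{\overline{q}_2}\|v\|_{W_2}^{\overline{q}_2}.
\]

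At last, on the sphere $\|(u,v)\|_W=r_m$ with $r_m\ge 2$, at least one of $\|u\|_{W_1}$, $\|v\|_{W_2}$ is $\ge r_m/2\ge 1$, so letting $p=\min\{p_1,p_2\}$, $\overline{q}=\max\{\overline{q}_1,\overline{q}_2\}$ and $\underline{q}=\min\{\overline{q}_1,\overline{q}_2\}$, the gradient part will be bounded below by $c_0 r_m^p$, while the remainder stays bounded above by $C_0 + C_0\eta_m r_m + 2C_0 \eta_m^{\underline{q}} r_m^{\overline{q}}$. I would then choose $r_m:=\eta_m^{-\beta}$ for any fixed $\beta \in\ ]0,\underline{q}/\overline{q}[$ (a non--empty subinterval of $]0,1]$), so that $r_m\to +\infty$, $\eta_m r_m=\eta_m^{1-\beta}\to 0$ and $\eta_m^{\underline{q}} r_m^{\overline{q}}=\eta_m^{\underline{q}-\beta\overline{q}}\to 0$, whereas $c_0 r_m^p\to+\infty$; hence $\J(u,v)\ge\varrho$ on the sphere for every sufficiently large $m=m(\varrho)$. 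The main obstacle will be precisely this final exponent--balancing across two heterogeneous components: it hinges on the strict subcriticality $\overline{q}_i<p_i^*$ from \eqref{minmaxPerera1}, without which the interval for $\beta$ would be empty. The symmetry hypotheses play no role in this geometric step, and $(h_4)$, $(g_2)$ enter only indirectly, through Remark \ref{rmkcrit1}.
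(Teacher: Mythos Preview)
Your proposal is correct and follows essentially the same route as the paper: both combine the ellipticity bound $(h_2)$, the growth estimate \eqref{Gsigmamax}, and an interpolation between the pseudo--eigenvalue inequality \eqref{lambdan+1} and the Sobolev embedding to make the Lebesgue remainders on $Y_m^{X_1}\times Y_m^{X_2}$ small relative to the gradient part, then choose the sphere radius $r_m$ so that the leading term diverges. The only differences are cosmetic --- the paper absorbs the linear $|u|_1$, $|v|_1$ terms into the $\overline q_i$--terms at the outset and selects $r_m$ by the explicit formula \eqref{rn}, while you carry them along and balance with $r_m=\eta_m^{-\beta}$; the underlying mechanism is identical.
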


\begin{proof}
The proof is essentially as in \cite[Proposition 6.4]{CP2}, carefully extended 
to a system. For completeness, we give here all the details and, 
without loss of generality, we suppose $1 < p_i < N$ for both $i \in \{1,2\}$
(otherwise, the proof is simpler).\\
Taking any $(u,v) \in X$,
from \eqref{functional}, hypothesis $(h_2)$, \eqref{Gsigmamax} and direct computations
it follows that
\begin{equation}    \label{J5.3}
\J(u, v)\geq \frac{\mu_0}{p_1}\int_{\Omega}\vert\nabla u\vert^{p_1} dx +
\frac{\mu_0}{p_2}\int_{\Omega}\vert\nabla v\vert^{p_2} dx - 
C_1\int_{\Omega}\vert u\vert^{\overline{q}_1} dx - C_1\int_{\Omega} \vert v\vert^{\overline{q}_2} dx - C_2
\end{equation}
for suitable constants $C_1$, $C_2 > 0$.
Moreover, \eqref{minmaxPerera1} allows us to take $r_1, r_2 >0$ such that
\[
\frac{r_1}{p_1} +\frac{\overline{q}_1-r_1}{p_1^{\ast}} =1 \quad 
\mbox{ and }\quad \frac{r_2}{p_2} +\frac{\overline{q}_2 - r_2}{p_2^{\ast}} = 1;
\]
so, by using a standard interpolation argument and \eqref{Sobpi},
we have that
\[
\begin{split}
&\vert u\vert_{\overline{q}_1}^{\overline{q}_1}\leq \vert u\vert_{p_1^{\ast}}^{\overline{q}_1 - r_1} \vert u\vert_{p_1}^{r_1}
\leq\tau_{1,p_1^{\ast}}^{\overline{q}_1- r_1} \Vert u\Vert_{W_1}^{\overline{q}_1- r_1} \vert u\vert_{p_1}^{r_1}
\quad \hbox{for all $u\in X_1$,} \\
&\vert v\vert_{\overline{q}_2}^{\overline{q}_2}\leq \vert v\vert_{p_2^{\ast}}^{\overline{q}_2 - r_2} \vert v\vert_{p_2}^{r_2}
\leq\tau_{2,p_2^{\ast}}^{\overline{q}_2- r_2}\Vert v\Vert_{W_2}^{\overline{q}_2 - r_2} \vert v\vert_{p_2}^{r_2} 
\quad \hbox{for all $v\in X_2$,}
\end{split}
\]
or better, fixing $m\geq 1$, from \eqref{lambdan+1} it follows that
\begin{equation}   \label{intimes}
\begin{split}
&\vert u\vert_{\overline{q}_1}^{\overline{q}_1}\leq 
\tau_{1,p_1^{\ast}}^{\overline{q}_1- r_1}(\lambda_{1,m+1})^{-\frac{r_1}{p_1}} \Vert u\Vert_{W_1}^{\overline{q}_1}
\qquad \hbox{for all $u\in Y_m^{X_1}$,} \\
&\vert v\vert_{\overline{q}_2}^{\overline{q}_2}\leq\tau_{2,p_2^{\ast}}^{\overline{q}_2- r_2}(\lambda_{2,m+1})^{-\frac{r_2}{p_2}} 
\Vert v\Vert_{W_2}^{\overline{q}_2} 
\qquad \hbox{for all $v\in Y_m^{X_2}$.}
\end{split}
\end{equation}
Then, \eqref{J5.3}, \eqref{intimes} and direct computations imply that
\[
\J(u, v)\geq \bar{\mu}_0 (\Vert u\Vert_{W_1}^{p_1}
+ \Vert v\Vert_{W_2}^{p_2}) - \frac{C_3}{\bar{\lambda}_{m}} (\Vert u\Vert_{W_1}^{\overline{q}_1} 
+ \Vert v\Vert_{W_2}^{\overline{q}_2}) - C_2
\qquad \hbox{for all $(u,v)\in Y_m^{X_1} \times Y_m^{X_2}$}
\]
for a suitable constant $C_3 > 0$ independent of $m$, where we assume
\begin{equation}   \label{rn1}
\bar{\mu}_0 = \min\left\{\frac{\mu_0}{p_1}, \frac{\mu_0}{p_2}\right\},\quad
\bar{\lambda}_{m} = \min\left\{(\lambda_{1,m+1})^{\frac{r_1}{p_1}}, (\lambda_{2,m+1})^{\frac{r_2}{p_2}}\right\}.
\end{equation}
On the other hand, taking
\[
p = \min\lbrace p_1, p_2\rbrace \quad \mbox{ and }\quad q = \max\lbrace\overline{q}_1, \overline{q}_2\rbrace,
\]
direct computations allow us to prove that
\[
\Vert u\Vert_{W_1}^{p_1} + \Vert v\Vert_{W_2}^{p_2} \ge \frac{1}{2^{p-1}}\|(u,v)\|_W^p -1
\quad \mbox{ and }\quad
\Vert u\Vert_{W_1}^{\overline{q}_1} 
+ \Vert v\Vert_{W_2}^{\overline{q}_2} \le 2 \|(u,v)\|_W^q\quad
\hbox{if $\|(u,v)\|_W \ge 2$.}
\]
Thus, summing up, for all $(u,v)\in Y_m^{X_1} \times Y_m^{X_2}$
such that  $\|(u,v)\|_W \ge 2$ we obtain that
\[
\J(u, v)\ \geq\ \|(u,v)\|_W^p \left(\frac{\bar{\mu}_0}{2^{p-1}} 
- \frac{2 C_3}{\bar{\lambda}_{m}} \|(u,v)\|_W^{q-p}\right) - C_4
\]
with $C_4 = C_2+\bar{\mu}_0$.\\
Now, consider $\|(u,v)\|_W = r_m$ with 
\begin{equation}   \label{rn}
r_m = \left(\frac{\bar{\mu}_0\ \bar{\lambda}_m}{2^{p+1} C_3}\right)^{\frac1{q-p}}
\end{equation}
(from \eqref{minmaxPerera1} it is $p<q$).
We note that \eqref{rn} implies 
\[
\frac{\bar{\mu}_0}{2^{p-1}} - \frac{2 C_3}{\bar{\lambda}_{m}} r_m^{q-p} =
\frac{\bar{\mu}_0}{2^{p}},
\]
furthermore again from \eqref{rn} and \eqref{rn0}, \eqref{rn1} we have 
\[
r_m \to +\infty\quad \hbox{if $m \to+\infty$.}
\]
Then, if $r_m \ge 2$, we obtain that
\[
\J(u, v)\ \geq\ \frac{\bar{\mu}_0}{2^p} r_m^{p} - C_4\qquad
\hbox{for all $(u, v)\in Y_m^{X_1}\times Y_m^{X_2}$, 
with $\Vert(u, v)\Vert_{W} = r_m$} 
\]
which gives the thesis if $\varrho > 0$ is fixed and $m$ is large enough.
\end{proof}

\begin{proof}[Proof of Theorem \ref{ThMolt}]
From \eqref{level0} it is $\J(0,0) = 0$. Then, 
fixing any $\varrho >0$, from Proposition \ref{PropMolt} it follows that
$m_\varrho\geq 1$ and $r_{m_\varrho}>0$ exist such that
\[
(u, v)\in Y_{m_\varrho}^{X_1}\times Y_{m_\varrho}^{X_2}, \quad
\Vert(u, v)\Vert_{W} = r_{m_\varrho}\qquad \implies\qquad \J(u, v)\geq\varrho;
\]
moreover, taking $m> m_\varrho$, the $m$--dimensional space $V_m$ is such that 
${\rm codim} Y_{m_\varrho} < {\rm dim} V_m$,
thus, Proposition \ref{geo2} implies that 
assumption $({\cal H}_\varrho)$ is verified
with 
\[
\M_\varrho = \{(u,v) \in X :\ \Vert(u, v)\Vert_{W} = r_{m_\varrho}\}.
\]
Hence, from Propositions \ref{smooth1} and
\ref{PropwCPS} we have that Corollary \ref{multiple} applies to $\J$ in $X$ 
and a sequence of diverging critical levels exists.
\end{proof}

In order to prove Corollary \ref{ThMolt1}, the following lemma is useful.

\begin{lemma} 
If hypotheses $(g_0)$ and $(g_2)$ hold, 
then, taking any $(u,v) \in \R^2$ such that $|(u,v)|\ge R$ we have that 
\begin{equation}     \label{htheta}
G(x,t^{\theta_1}u,t^{\theta_2}v)\ \geq\ t\ G(x,u,v)\quad 
\mbox{ for a.e. } x\in\Omega, \mbox{ all } t \ge 1.
\end{equation}
\end{lemma}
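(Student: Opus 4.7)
The plan is to reduce the inequality to a one-dimensional ODE-style argument along the curve $t \mapsto (t^{\theta_1}u, t^{\theta_2}v)$. Fix $x \in \Omega$ for which $G(x,\cdot,\cdot)$ is $C^1$ and $(g_2)$ holds, and fix $(u,v)\in\R^2$ with $|(u,v)|\ge R$. Define
\[
\phi(t) \;=\; G(x,t^{\theta_1}u,t^{\theta_2}v),\qquad t\ge 1.
\]
Since $\theta_1,\theta_2>0$, for every $t\ge 1$ we have $t^{\theta_i}\ge 1$, so $|(t^{\theta_1}u,t^{\theta_2}v)|\ge |(u,v)|\ge R$. Thus $(g_2)$ keeps being applicable all along the curve, and in particular $\phi(t)>0$ for $t\ge 1$.

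Next I would differentiate via the chain rule (justified by $(g_0)$):
\[
t\,\phi'(t) \;=\; \theta_1\,G_u(x,t^{\theta_1}u,t^{\theta_2}v)\,(t^{\theta_1}u) \;+\; \theta_2\,G_v(x,t^{\theta_1}u,t^{\theta_2}v)\,(t^{\theta_2}v).
\]
Applying $(g_2)$ at the point $(t^{\theta_1}u,t^{\theta_2}v)$, whose norm is $\ge R$ by the previous observation, yields the differential inequality
\[
t\,\phi'(t)\;\ge\;\phi(t)\qquad\text{for all }t\ge 1.
\]

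From here I would integrate. Since $\phi>0$, the inequality rewrites as $\frac{d}{dt}\bigl(\phi(t)/t\bigr)=\frac{t\phi'(t)-\phi(t)}{t^2}\ge 0$, so $\phi(t)/t$ is nondecreasing on $[1,+\infty)$. Evaluating at $t=1$ gives $\phi(t)/t\ge \phi(1)=G(x,u,v)$, i.e.\ \eqref{htheta}. Equivalently, one may integrate $\frac{d}{dt}\log\phi(t)\ge \frac{1}{t}$ between $1$ and $t$ to obtain $\log(\phi(t)/\phi(1))\ge \log t$.

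There is essentially no technical obstacle: the only subtle point is to verify that the curve $t\mapsto (t^{\theta_1}u,t^{\theta_2}v)$ stays in the region $\{|(u,v)|\ge R\}$ where $(g_2)$ is available, which follows immediately from $\theta_1,\theta_2>0$ and $t\ge 1$; everything else is the chain rule plus integration of a scalar differential inequality.
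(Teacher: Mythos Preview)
Your proof is correct and follows essentially the same route as the paper: define $\phi(t)=G(x,t^{\theta_1}u,t^{\theta_2}v)$, use the chain rule together with $(g_2)$ at the point $(t^{\theta_1}u,t^{\theta_2}v)$ to obtain $t\phi'(t)\ge\phi(t)>0$, and then integrate (the paper uses the logarithmic form you mention as the alternative). Your explicit verification that the curve stays in the region $\{|(u,v)|\ge R\}$ is a welcome detail that the paper leaves implicit.
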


\begin{proof}
Taking $(u,v) \in \R^2$ such that $|(u,v)|\ge R$ 
and any $t \ge 1$, $(g_0)$, $(g_2)$ and direct computations 
imply that
\[\begin{split}
\frac{d}{d t}G(x,t^{\theta_1}u,t^{\theta_2}v)\ &=
\frac{1}{t} \big(\theta_1 G_u(x,t^{\theta_1}u,t^{\theta_2}v) t^{\theta_1}u +
\theta_2 G_v(x,t^{\theta_1}u,t^{\theta_2}v) t^{\theta_2}v\big)\\
&\ge \frac{1}{t} G(x,t^{\theta_1}u,t^{\theta_2}v) > 0
\end{split}
\]
for a.e. $x \in \Omega$. Hence, fixing $t \ge 1$ and by integration in $[1,t]$
we obtain that 
\[
\lg\left(\frac{G(x,t^{\theta_1}u,t^{\theta_2}v)}{G(x,u,v)}\right) \ge \lg t
\]
which gives \eqref{htheta}.
\end{proof}

\begin{proof}[Proof of Corollary \ref{ThMolt1}]
The proof is a direct consequence of Theorem \ref{ThMolt}  
if we prove that $(g_4)$, with $\theta = \theta_1 = \theta_2$
due to $(g_7)$, follows from $(g_6)$.\\
To this aim, since $(g_6)$ holds, 
we denote 
\[
C_R = \inf\{G(x,w,z):\ \hbox{for a.e.}\ x \in \Omega,\ (w,z) \in \R^2 \ \hbox{such that $|(w,z)|=R$}\} > 0
\]
with $R > 0$ as in $(g_2)$.
Then, fixing any $(u,v) \in \R^2$ such that $|(u,v)| \ge R$,
by applying \eqref{htheta} with 
\[
t = \left(\frac{|(u,v)|}{R}\right)^{\frac1\theta}\ \ge \ 1,
\]
direct computations imply
\[
G(x,u,v) \ \ge\ \left(\frac{|(u,v)|}{R}\right)^{\frac1\theta} G\left(x,\frac{R u}{|(u,v)|},\frac{R v}{|(u,v)|}\right)
\ \ge\ \frac{C_R}{R^{\frac1\theta}} |(u,v)|^{\frac1\theta}\quad \hbox{for a.e.}\ x \in \Omega.
 \]
Hence, we obtain
\[
\frac{G(x, u, v)}{\vert u\vert^{\frac{1}{\theta}} + \vert v\vert^{\frac{1}{\theta}}}\
\ge\ \frac{C_R}{R^{\frac1\theta}} \
\frac{|(u,v)|^{\frac1\theta}}{\vert u\vert^{\frac{1}{\theta}} + \vert v\vert^{\frac{1}{\theta}}}
\ \ge\ \frac{C_R}{2R^{\frac1\theta}} > 0
\quad \hbox{for a.e. $x \in \Omega$ if $|(u,v)| \ge R$,}
\]
which implies $(g_4)$.
\end{proof}

\begin{proof}[Proof of Corollary \ref{cor1}]
From \eqref{cor11}, taking $\theta_i > 0$ such that
\[
p_i+\gamma_i < \frac1{\theta_i} \le q_i,\quad i \in \{1,2\}, 
\]
we have that \eqref{ex06} and \eqref{ex15} hold.
Moreover, \eqref{cor11} implies \eqref{crit_exp} while
from \eqref{ex16} and \eqref{cor12} it follows \eqref{crit_expi}.
At last, we note that \eqref{ex12} gives
\[
\frac{G(x,u,v)}{|u|^{\frac1{\theta_1}} + |v|^{\frac1{\theta_2}}} 
\ge \frac{|u|^{q_1} + |v|^{q_2}}{|u|^{\frac1{\theta_1}} + |v|^{\frac1{\theta_2}}}
\quad \hbox{if $(u,v) \ne 0$.}
\]
Then, $(g_4)$ holds as direct computations imply
\begin{equation}\label{corbis} 
\frac{|u|^{q_1} + |v|^{q_2}}{|u|^{\frac1{\theta_1}} + |v|^{\frac1{\theta_2}}} \ge \frac12
\quad \hbox{if $|(u,v)| \ge 2$.}
\end{equation}
Thus, from Examples \ref{ex0} and \ref{ex1}, it follows that
Theorem \ref{ThMolt} applies to $\J_1$ in $X$.
\end{proof}

\begin{proof}[Proof of Corollary \ref{cor2}]
From \eqref{cor21}, taking $\theta_1$, $\theta_2 > 0$ such that
\begin{equation}\label{cor21bis}
p_1+\gamma_1 < \frac1{\theta_1} \le \gamma_3, \qquad
p_2+\gamma_2 < \frac1{\theta_2} \le \gamma_4,
\end{equation}
we have that \eqref{ex06} and \eqref{ex25} hold.
Moreover, \eqref{cor21} implies \eqref{crit_exp} while
from \eqref{ex23} and \eqref{cor22} it follows \eqref{crit_expi}.
At last, from \eqref{cor21} and \eqref{cor21bis}, since
\eqref{corbis} is satisfied, we have that $(g_4)$ holds.
Then, from Examples \ref{ex0} and \ref{ex2},
Theorem \ref{ThMolt} applies to $\J_2$ in $X$.
\end{proof}


\section*{Acknowledgement}
The authors would like to thank the referee for his/her valuable comments 
which helped to improve this manuscript.


\end{document}